\documentclass{amsart}[12 pt]
\usepackage{amsmath, amsfonts, amssymb, euscript, amscd, latexsym, mathrsfs, bm, color, bbm}
 \usepackage{titlesec}
 \usepackage[pdftex,colorlinks=true,
                       pdfstartview=FitV,
                       linkcolor=blue,
                       citecolor=blue,
                       urlcolor=blue,
           ]{hyperref}

 \usepackage{enumitem}

 \sloppy
 \frenchspacing

\def\aa#1{ \begin{align*} #1 \end{align*} }
\def\aaa#1{ \begin{align} #1 \end{align} }
\def\mm#1{ \begin{multline*} #1 \end{multline*} }
\def\mmm#1{ \begin{multline} #1 \end{multline} }

\def\begeq{\begin{equation} \begin{cases}} 
\def\endeq{ \end{cases} \end{equation}}
\def\eq#1{ \begeq #1 \endeq }
\renewcommand{\vec}[2]{\ensuremath{\begin{pmatrix} #1 \\ #2\end{pmatrix}}}
\newcommand{\vect}[3]{\ensuremath{\begin{pmatrix} #1 \\ #2 \\ #3\end{pmatrix}}}
\def\bege{\begin{equation*} \begin{cases}} 
\def\ende{ \end{cases} \end{equation*}}

\def\eqn#1{ \begin{equation} \begin{aligned} #1 \end{aligned}\end{equation}}

\newtheorem{thm}{\sc Theorem}[section]
\newtheorem{lem}{\sc Lemma}[section]
\newtheorem{cor}{\sc Corollary}[section]
\newtheorem{df}{\sc Definition}[section]
\newtheorem{pro}{\sc Proposition}[section]
\newtheorem{rem}{\sc Remark}[section]


\newcommand{\eps}{\varepsilon}
\newcommand{\pl}{\partial}
\newcommand{\gt}{\geqslant}
\newcommand{\lt}{\leqslant}
\newcommand{\te}{\theta}
\newcommand{\sub}{\subset}
\newcommand{\dl}{\delta}
\newcommand{\al}{\alpha}
\newcommand{\gm}{\gamma}
 \newcommand{\Gm}{\Gamma}
 \newcommand{\Dl}{\Delta}
 \newcommand{\supp}{{\rm supp\,}}
 \newcommand{\la}{\lambda}
 
 \newcommand{\sg}{\sigma}
  \newcommand{\Sg}{\Sigma}

\newcommand{\om}{\omega}
\newcommand{\mc}{\mathcal}
 
\newcommand{\ms}{\mathscr}
\newcommand{\Om}{\Omega}

\newcommand{\mr}{\mathring}
\newcommand{\mb}{\mathbf}
\newcommand{\C}{{\rm C}}

\newcommand{\td}{\tilde}
\newcommand{\ox}{\otimes}

\newcommand{\E}{\mathbb E}
\newcommand{\we}{\wedge}

\renewcommand{\>}{\rangle}

\newcommand{\x}{\times}
\newcommand{\mto}{\mapsto}
\newcommand{\PP}{\mathbb P}
\newcommand{\mbb}{\mathbb}
\newcommand{\mbf}{\mathbf}

\newcommand{\rf}{\eqref}
\newcommand{\bi}{\begin{itemize}}
\newcommand{\ei}{\end{itemize}}

\DeclareMathOperator{\ind}{\mathbbm{1}}

\newcommand{\lb}{\label}

\newcommand{\fdot}{\,\cdot\,}

\newcommand{\sss}{\scriptscriptstyle}
\newcommand{\tp}{{\sss \top}}

\newcommand{\sig}{\varsigma}

\def\Rnu{{\mathbb R}}

\def\Nnu{{\mathbb N}}

\def\ffi{\varphi}

\def\com#1{}

\long\def\symbolfootnote[#1]#2{\begingroup%
\def\thefootnote{\fnsymbol{footnote}}\footnote[#1]{#2}\endgroup}
\titleformat{\section}[hang]{\large\bfseries}{\thesection.}{1ex}{}{}
\titleformat{\subsection}[hang]{\normalsize\bfseries}{\thesubsection}{2ex}{}{}
\titleformat{\subsubsection}[hang]{\small\bfseries}{\thesubsubsection}{2ex}{}{}

\allowdisplaybreaks

\include{srctex.sty}

\usepackage{lipsum}
\usepackage{algorithmic}

\bibliographystyle{plainnat}

\begin{document}

\title[Smoothness of densities for path-dependent SDEs] {Smoothness of densities for path-dependent SDE\MakeLowercase{s} \\ under H\"ormander's condition}

\author{Alberto Ohashi}

\address{Departamento de Matem\'atica, Universidade de Bras\'ilia, 70910-900, Bras\'ilia, Brazil}
\email{ohashi@mat.unb.br}

\author{Francesco Russo}

\address{ENSTA Paris, Institut Polytechnique de Paris,
 Unit\'e de Math\'ematiques appliqu\'ees, 828, boulevard des Mar\'echaux,
F-91120 Palaiseau, France} \email{francesco.russo@ensta-paris.fr}

\author{Evelina Shamarova}

\address{Departamento de Matem\'atica, Universidade Federal da Para\'iba, 58051-900, Jo\~ao Pessoa, Brazil}
\email{evelina@mat.ufpb.br}

\maketitle

\vspace{-9mm}

\begin{abstract}
We establish the existence of smooth densities for solutions to a broad class of path-dependent SDEs
under a H\"ormander-type condition. The classical scheme based on the reduced Malliavin matrix turns out to be 
unavailable in the path-dependent context. We approach the problem by lifting
the given $n$-dimensional path-dependent SDE into a suitable $L_p$-type Banach space in such a way that
 the lifted Banach-space-valued equation becomes a state-dependent reformulation of the original SDE. 
We then formulate H\"ormander's bracket condition in $\Rnu^n$ for non-anticipative SDE coefficients defining the Lie brackets in terms
 of vertical derivatives in the sense of the functional It\^o calculus. Our pathway to  the main result engages  an interplay between the
 analysis of SDEs in Banach spaces, Malliavin calculus, and rough path  techniques. 
 \end{abstract}

\vspace{3mm}

{\footnotesize
{\noindent \bf Keywords:} 
H\"ormander's theorem; path-dependent SDEs; SDEs in Banach spaces;  rough paths

\vspace{0.5mm}

{\noindent\bf 2020 MSC:} 
60H07, 60L20, 34K45
}

\section{Introduction}
We address the fundamental problem on the existence of  a smooth density for the law of the solution to an $n$-dimensional path-dependent SDE
\aaa{
\lb{sde}
X(t) = x_0 + \int_0^t b(s,X^s,X(s)) ds + \int_0^t \sg(s,X^s,X(s))  dB_s, \quad t\in [0,T],
}
under a H\"ormander-type condition.
Above, for $x\in \C([0,T], \Rnu^n)$, $x^t$ denotes the path stopped at $t$, i.e., $x^t(s) = x(t\we s)$, 
 $B_t$ is a $d$-dimensional standard Brownian motion, 
$b$ and $\sg$ are $\Rnu^n$-valued and, respectively, $\Rnu^{n\x d}$-valued maps defined on appropriate spaces.
  The stochastic integral in \rf{sde} is understood in the It\^o sense.

The existence and regularity of densities for path-dependent SDEs
 was studied by various authors in the 80s and 90s \cite{BM91,BM95,burkhdan,hirsch,KSt,stroock}, 
 and also in recent years \cite{BC,CE, AT, AT3}. Among the main references stands the work by 
 S. Kusuoka and D. Stroock \cite{KSt}, where
  smooth densities were proved to exist  for SDEs with non-anticipative smooth coefficients
depending on the path in a general manner;
however, under the strong ellipticity assumption  $\sg^\tp\!\sg\gt \dl E$  (where $\dl>0$ and $E$ is the identity matrix).
Although it is natural to ask whether  one can find a H\"ormander condition
for general path-dependent maps and obtain the smoothness of the density for path-dependent SDEs
under this condition, to the best of authors' knowledge, this question has never been addressed before. 
Many of the aforementioned articles use the strong ellipticity assumption 
on the diffusion coefficients \cite{BC, BM91,  KSt, stroock, AT3}, others formulate a weaker condition (which, however,
is not H\"ormander's condition)  allowing the diffusion matrix to degenerate \cite{BM95, burkhdan, hirsch}.
Only two papers, \cite{AT} and \cite{CE}, deal with H\"ormander's condition for path-dependent SDEs,
both restricted to particular cases. In \cite{AT},
the equation is path-dependent only through the drift coefficient, and H\"ormander's condition
is applied to state-dependent diffusion coefficients; in \cite{CE}, a version of H\"ormander's condition
is formulated for SDEs with discrete delays.

In the present work,  we formulate a H\"ormander bracket condition that naturally extends the classical state-dependent case,
and furthermore, we show that under this condition,
the solution to \rf{sde} admits a smooth density with respect to Lebesgue measure. 
More specifically, H\"ormander's condition is formulated  for non-an\-ti\-ci\-pative maps
from the family $\sg_1,\ldots, \sg_d$,  $[\sg_i, \sg_j]$,  $1\lt i,j \lt d$,  
$[\sg_i, [\sg_j,\sg_k]]$, $1\lt i,j,k \lt d$, etc, where
the Lie brackets $[\fdot,\!\fdot]$ are understood in terms of vertical derivatives  \cite{cont, DUP}
(here, $\sg_k = \sg e_k$, where $\{e_k\}_{k=1}^d$ is the standard basis of $\Rnu^d$).
Importantly, our coefficients
$b(t,\fdot)$ and $\sg(t,\fdot)$ are allowed to depend on the whole history of the path $\{X(s), 0\lt s\lt t\}$ arbitrarily.
It is worth to point out that the strong ellipticity assumption  $\sg^\tp\!\sg\gt \dl E$ is a particular case of our far more general
H\"ormander's bracket condition, and that the latter allows degeneracy  of  the diffusion matrix 
(see the dis\-cussion in \cite{bell2004}, p. 39, on this topic).

The classical H\"ormander bracket condition is a sufficient hypothesis for the hypoellipticity of second-order differential operators
with smooth coefficients,
while the property of hy\-po\-el\-lip\-ti\-ci\-ty is connected with the existence of smooth densities for solutions
to the associated state-dependent SDEs. 
H\"ormander's theorem states that a second-order differential operator is hypoelliptic  if H\"ormander's condition holds, and
 its original proof  \cite{hor67} relies only  on  PDE methods.
The initial goal of the Malliavin calculus \cite{mal} was to directly prove the existence of smooth densities from H\"ormander's condition,
and by this, to provide a probabilistic proof of H\"ormander's theorem.
Subsequently, original Malliavin's proof 
was simplified, extended, or approached differently by a number of authors \cite{BM95-1, bis1, bis2, KSt, KSt2, KSt3, norris, wat84}
(see \cite{bell87} for a survey of approaches to the Malliavin calculus).
In later years, versions of H\"ormander's theorem by
using Malliavin calculus techniques were obtained for a variety
of differential equations: SDEs driven by a fractional Brownian motion \cite{BH, HT} and general Gaussian processes \cite{cass15, HP2},
SDEs with jumps \cite{BJG, cass09, IK, AT2}, rough differential equations (henceforth abbreviated as RDEs) \cite{cass15,HP2, HT}, 
and SPDEs \cite{GH, HM, MP, ocone}. To complement this list, we extend H\"ormander's theorem to path-dependent SDEs. 

Problems on regularity of laws in the non-Markovian context are understood in the literature in two different ways:
the memory may be transmitted to the system 
through the driving noise or through the coefficients depending on the solution path.
 Most of non-Markovian versions of H\"or\-man\-der's theorem were obtained for the first of the aforementioned cases (e.g., \cite{BH, cass15, HP2, HT}). In contrast, our article provides a H\"ormander theorem which is non-Markovian in the
path-dependent context.

Path-dependent SDEs represent a singular phenomenon
that is not found in those types of non-Markovian equations whose coefficients are state-dependent.
More specifically, equation \rf{sde}, due to the path dependency, may not lead to an invertible Jacobian at each point of $[0,T]$.
We give a simple example. Let $b(t,X^t, X(t))$ be equal  to $-X(t)$ if $t\lt 1$ and $-X(1)$ if $t>1$; $\sg = 1$, $n=1$.
The Jacobian $\pl_{x_0} X(t)$ can be then explicitly computed and equals $e^{-1}(2-t)$ which implies that $\pl_{x_0} X(2)^{-1}$ does not exist
(for further examples, see, e.g., \cite{hale}).
The inverse matrix plays, however, a major role
in the classical probabilistic proof of H\"ormander's theorem. As it is clear from the proof exposed in \cite{nualart},
the argument essentially relies on the  \textit{reduced} Malliavin matrix which is
expressed through the inverse matrix; and, furthermore, 
the Lie brackets, involved in H\"ormander's condition, appear through the interaction between
the inverse matrix and  the vector fields $\sg_k$.

 To overcome the issue of non-invertibility of the Jacobian, we reformulate 
 \rf{sde} through its lifting to the space space $\mc E_p = L_p([0,T]\to \Rnu^n,  \la+\mu_s) \oplus \Rnu^n$,
 where $\la$ is Lebesgue measure  and $\mu_s$ is a singular finite positive Borel measure on $[0,T]$.
 The support of $\mu_s$ need not be a discrete set; however, we will require that
 $[\tau_0,\tau) \cap \supp \mu_s  = \varnothing$
 for a left neighborhood of the point $\tau$ in which we want to prove the smoothness of the density for the
 solution to \rf{sde}.  By introducing a singular component $\mu_s$, we aim to encompass a larger class 
 of path-dependent coefficients. However, if a particular type of path dependence  can be described
 by the Lebesgue measure only or if
the singular component is concentrated on a finite set,
then, under assumptions (A1)--(A5) formulated below, the smoothness of the density
can be obtained at any point $\tau\in (0,T]$. Furthermore, if the support of $\mu_s$ is nowhere dense (e.g., Cantor set),
then, again under (A1)--(A5), the smoothness of the density can be obtained at Lebesgue-almost every point $\tau\in (0,T]$.

Next, we introduce infinite-dimensional lifts  to  $\mc E_p$ as follows:
\aaa{
\lb{main-lifts}
X_t = \vec{X^t}{X(t)}, \quad \hat\sg(t,\fdot) = \vec{\ind_{[t,T]} \sg(t,\fdot)}{\sg(t,\fdot)}, \quad
  \hat b(t,\fdot)  = \vec{\ind_{[t,T]} b(t,\fdot) }{b(t,\fdot) },
 } 
 so the SDE \rf{sde} takes the form
 \aaa{
\lb{sde1}
X_t = \vec{x_0}{x_0} + \int_0^t \hat b(s,X_s) ds +  \int_0^t \hat \sg(s,X_s) dB_s.
}
The infinite-dimensional reformulation of equation \rf{sde} is required for the inverse operator of 
the Jacobian of $X_t$ to exist 
 and allow a representation as a solution to a well-defined SDE. 
 Moreover, this SDE must allow a reinterpretation as an RDE.
 Thus, the choice of the space of the lift must meet the two above requirements.

 Dealing with RDEs is, in general, indispensable in infinite-dimensional problems on regularity of laws.
This happens because one wants to show the existence of smooth densities for 
 finite-dimensional projections of an infinite-dimensional solution. 
 In connection to this, we remark that the solution to \rf{sde} is a finite-dimensional projection of the infinite-dimensional
 solution to \rf{sde1}. In our setting, the reduced 
Malliavin covariance operator exists; however, it is infinite-dimensional, and thus,
the argument on the inverse moments of its
``determinant''   (see \cite{nualart}) does not work.
For this reason, we deal with the full Malliavin matrix (which is finite-dimensional).
This leads to the appearance of non-adapted factors attached to the Lie brackets in equations of  the type \rf{m-e} (below)
and makes an application of the classical Norris lemma \cite{nualart} impossible.
 Therefore, one can only hope to be able to use the version of Norris's lemma for rough paths \cite{HP2}.

 Getting back to the requirements on the infinite-dimensional state space for \rf{sde1}, we observe that
a serious obstacle is the presence of the indicator function $\ind_{[t,T]}$ in the definition of $\hat \sg$.
For example, the stochastic integral  $\int_{\tau_0}^t Z_s\pl_x \hat\sg(s,X_s) dB_s$, involved  in the SDE for the 
 inverse operator $Z_t$, must make sense as a rough integral; however,
 we can only hope for the integrand to be controlled by $B_t$ if the norm in the space of the lift 
 makes the map $t\mto \ind_{[t,T]}(\fdot)$ $2\al$-H\"older continuous with $\al\in (\frac13,\frac12)$.
 To achieve this, we choose 
  $p\in (1,\frac32)$ for the above-described space $\mc E_p$.

The first guess would be to choose $\mc H = L_2([0,T]\to \Rnu^n, \la+\mu_s) \oplus \Rnu^n$
as the space of the lift since the analysis of SDEs in Hilbert spaces is much simpler.
However, as we show in Subsection \ref{ind-f}, the map 
\aaa{
\lb{ind-map}
[\tau_0,\tau] \to L_p([0,T],  \la+\mu_s), \quad t\mto \ind_{[t,T]}(\fdot)
}
is $\frac1p$-H\"older  continuous.
This tells us that $\mc H$ is not suitable as the space of the lift since in this space, the map \rf{ind-map} does
not possess the minimal H\"older regularity discussed above.
On the other hand, in the space $\mc E_p$
with $p\in (1,\frac32)$, the map \rf{ind-map} becomes at least $(\frac23 + \eps)$-H\"older regular for every 
arbitrarily small positive number $\eps$. One may think of it as that the topology of $\mc H$ is too strong for
rough integrals containing $\ind_{[t,T]}$ to converge, so one needs to weaken the topology, 
and this is how the space $\mc E_p$ comes into play.

When dealing with RDEs, we restrict  our analysis to the interval $[\tau_0,\tau]$ since the $\frac1{p}$-H\"older regularity
 of the map \rf{ind-map} may not hold on $[0,T]$. This restriction becomes possible
since for the Malliavin covariance matrix $\gm_\tau$ of $X(\tau)$, it holds that
 \aa{
 (\gm_\tau z,z) \gt (\gm^0_\tau z,z), \quad  \text{where} \; \;  (\gm^0_\tau z,z) = \sum_{k=1}^d \int_{\tau_0}^\tau (z,J_{\tau,s} \hat \sg_{k}(s,X_s))^2 \, ds,
 \quad z\in \Rnu^n.
 }

Let us point out the properties of $\mc E_p$ that are important in our analysis.
First, the space $\mc E_p$ allows to consider a rich class of 
 path-dependent coefficients  which can be handled within our framework.
Second, as we show in Subsection \ref{sde-inverse}, it is possible to define a stochastic integral in $\mc L(\mc E_p,\mc H)$,
and by this, to give meaning to the SDE for the inverse operator of the Jacobian of $X_t$ in $\mc L(\mc E_p)$.
More specifically, slightly reformulating the original SDE,
we are able to make use of the theory of stochastic integration in the 2-smooth Banach space 
of $\gm$-radonifying operators $\mc H\to \mc E_p^*$.
Also, recall that the $\mc L(\mc E_p)$-valued SDE for the inverse operator must make sense as an RDE.  
The latter becomes possible (on the interval $[\tau_0,\tau]$) due to the
$(\frac23 + \eps)$-H\"older regularity of the map \rf{ind-map}.
Another advantage of the space $\mc E_p$ is that
we are able to use the theory of SDEs to prove the existence of solutions to  RDEs.
The first step in this direction is finding a sufficient condition for an It\^o stochastic integral to be controlled by 
$B_t$. This becomes possible due to a suitable adaptation of 
the proof of Kolmogorov's criterion for rough paths \cite{FH}.
Next, since the stochastic integral involved in an SDE is a controlled rough path, so is the solution.
Thus, we conclude that $X_t$ and the inverse operator $Z_t$ are controlled by $B_t$. This,
together with the correct H\"older regularity  of the map \rf{ind-map}, implies that the integrands 
$\hat \sg(t,X_t)$ and $Z_t \pl_x \hat \sg(t,X_t)$ are also controlled by $B_t$.
The respective
stochastic integrals can be then regarded as rough integrals, and SDEs as RDEs.

Equation \rf{m-e}, introduced below, 
 is the central object in proving, by means of Norris's lemma for rough paths, 
that $X(\fdot)$ admits a smooth density at point $\tau\in (0,T]$. 
The equation reads
\mmm{
\lb{m-e}
J_{\tau,t} \hat V(t,X_t)   =  J_{\tau,\tau_0}\hat V(\tau_0,X_{\tau_0}) + 
\int_{\tau_0}^t J_{\tau,s}\vec{V(s,X_s)}{0}d\ind_{[s,T]} \\
+ \int_{\tau_0}^t J_{\tau,s}(\widehat{\pl_s V} + [\hat \sg_0, \hat V])(s,X_s) \, ds 
+ \int_{\tau_0}^t   J_{\tau,s} [\hat \sg, \hat V](s,X_s) d \mbf  B_s, \qquad t\in [\tau_0,\tau].
}
Here, $J_{\tau,t} = \Pi_n Y_\tau Z_t$, where
$Y_t$ is the Jacobian of $X_t$, $Z_t$ is its inverse, and
 $\Pi_n: \mc E_p \to\Rnu^n$, $(x,y)\mto y$, is the projection onto $\Rnu^n$.
Next, $\mbf B_t$ is the Brownian rough path  lifted from $B_t$, 
$[\hat \sg, \hat V] =  \sum_{k=1}^d [\hat \sg_k, \hat V]\ox e_k$, and 
$\hat\sg_0 = \hat b - \frac12 \sum_{k=1}^d \pl_x \hat \sg_k\hat\sg_k$.
Furthermore, for each $(t,x)\in [0,T]\x \mc E_p$, the $\mc E_p$-lift $\hat V$ is defined as follows:
\aaa{
\lb{ep-lift}
\hat V(t,x) = \vec{\ind_{[t,T]}(\fdot) V(t,x)}{V(t,x)}.
} 
We emphasize that giving meaning to \rf{m-e} would not
be possible unless our choice of the space $\mc E_p$ with $p\in (1,\frac32)$.
The Lie brackets in \rf{m-e} are generated through the interaction, by means of It\^o's formula, of the inverse operator $Z_t$
 with  $\hat V(t,X_t)$, 
 where the map $\hat V$  comes from the family of iterated Lie brackets
$\hat \sg_1,\ldots, \hat \sg_d$,  $[\hat \sg_i,\hat \sg_j]$,  $1\lt i,j \lt d$,  
$[\hat \sg_i, [\hat \sg_j,\hat \sg_k]]$, $1\lt i,j,k \lt d$, etc.
However, the factor $\ind_{[t,T]}$  unavoidably makes $\hat V$ depending on $t$, and due to
the non-differentiability of this factor, a direct application of any version of It\^o's formula, classical or rough, is not possible.
However, in $\mc E_p$, one can  integrate with respect to $\ind_{[s,T]}$  in the Young sense
which allows to overcome the non-differentiability issue. Also, 
 the integral with respect to $\mbf B_s$ becomes well-defined  since  the map $s\mto Z_s[\hat \sg, \hat V](s,X_s)$ is sufficiently regular.

Another important aspect of this work is a formulation of H\"ormander's condition in $\Rnu^n$ relying on the 
following definition of the Lie brackets. 
For two non-anticipative maps $V_i: [0,T] \x D([0,T],\Rnu^n) \to \Rnu^n$, $i=1,2$,
we define the Lie bracket as follows:
\aa{
[V_1,V_2](t,x^t,x(t)) = (\pl_v V_2 V_1 - \pl_v V_1 V_2) (t,x^t,x(t)).
}
Here, $\pl_v$ denotes the vertical derivative, as it is defined in \cite{cont}. Note that, if $V_1$ and $V_2$ can be extended
to Fr\'echet-differentiable maps $[0,T]\x \mc E_p \to \Rnu^n$, then for each $(t,x) \in [0,T] \x D([0,T],\Rnu^n)$,  
it holds that
\aa{
[V_1,V_2](t,x^t,x(t)) = \Pi_n [\hat V_1, \hat V_2](t,x^t,x(t)),
}
which connects the above-defined Lie brackets  with the classical Lie brackets
 in $\mc E_p$.

Remark that due to the presence of the Young integral,
 equation \rf{m-e}  does not allow to use the term
$\widehat{\pl_s V} + [\hat \sg_0, \hat V]$ 
 in the formulation of H\"ormander's condition.
Also, we note that  $J_{\tau,\tau} [\hat \sg_k, \hat V](\tau,X_\tau) = [\sg_k,V](\tau,X_\tau)$. This is how 
H\"ormander's condition in $\Rnu^n$ enters into the infinite-dimensional analysis of the problem.

We now highlight the three main elements in our proof of H\"or\-man\-der's theorem. 
\begin{itemize}[leftmargin=12pt, topsep=2pt]
 \item[1.] We specify the Banach space $\mc E_p$ that ensures the sufficient H\"older regularity of the 
 map $t\mto \ind_{[t,T]}(\fdot)$. At the same time, $\mc E_p^*$ is 2-smooth, so the inverse
 operator turns out to be a solution to a well-defined SDE.
\item[2.] We define the Lie brackets for non-anticipative maps to formulate
a path-dependent version of H\"or\-man\-der's condition in $\Rnu^n$.
\item[3.]  We interpret stochastic integrals as rough integrals, and SDEs as RDEs.
 The latter allows us to obtain solutions to RDEs using the theory of SDEs which requires less
restrictions on the coefficients. Our results in this direction may also have an independent interest.
 Importantly,  using SDEs, we are able to prove the existence of finite moments 
of the  H\"older seminorms $\|X\|_\al$, $\|Z\|_\al$ and of certain norms of the quantities $R^X$,   $R^Z$ 
(introduced in Subsection \ref{smooth}).
The aforementioned objects are extensively used in the argument on the existence of smooth densities. 
\end{itemize}
We believe that most of the aforementioned works  \cite{BM91, BM95, burkhdan, hirsch, KSt, stroock, AT, AT3} aim
to find H\"ormander's condition alternatives that ensure the existence and smoothness
of densities for path-dependent SDEs; so it is important to demonstrate
how the tools developed by M. Hairer and coauthors \cite{GH, HP2}, e.g., Norris's lemma for rough paths,
can be used to make a path-dependent version of H\"ormander's theorem possible.
It remains to point out that equation \rf{m-e}  resembles equation (1.4) from \cite{GH}, 
and our route to the smoothness of densities for path-dependent SDEs was, in fact, inspired by (1.4). 
However, the nature of our problem requires a new set of ideas; so
the main elements, highlighted above, are new and do not have analogs in \cite{GH} or \cite{HP2}.


Finally, we remark that the scheme introduced in this article is expected to work for path-dependent RDEs
under the assumption that the Jacobian of the lifted equation and its inverse possess finite moments of all orders. 

We structure our work as follows. In Section \ref{s2}, we show that equation \rf{sde} is equivalent 
to the $\mc E_p$-valued equation \rf{sde1}. In Section \ref{s3}, we prove the existence of the inverse
operator $Z_t$ of the Jacobian of $X_t$.
In Section  \ref{me},  
we obtain a sufficient condition when an It\^o stochastic integral is controlled by $B_t$ (Proposition \ref{pro4.4}). 
Furthermore, we prove  that the SDEs for $\hat V(t,X_t)$ and $Z_t$ can be viewed as RDEs. 
In the same section, we derive equation \rf{m-e} (Theorem \ref{thm1}). 
Finally, in Section \ref{s4}, 
we formulate a path-dependent version of H\"ormander's bracket condition in $\Rnu^n$ and prove that $X(\tau)$
admits a smooth density with respect to Lebesgue measure (Theorem \ref{thm2}).
\section{State-dependent reformulation of the SDE \rf{sde}}
\lb{s2}
\subsection{Notation}
\lb{notation}
Let $\mu=\la+\mu_s$, where $\la$ is Lebesgue measure and $\mu_s$ is a 
singular finite positive Borel measure on $[0,T]$.

Define $E_p =  L_p([0,T]\to \Rnu^n,  \mu)$, 
$\mc E_p = E_p \oplus \Rnu^n$, 
$p\in (1,2]$,
 $\mc E =  \C([0,T],\Rnu^n) \oplus\Rnu^n$,
$H=E_2$, $\mc H = \mc E_2$. The squared norm in $\mc E_p$ is defined as follows:
\aa{
\Big(\!\int_0^T |h_1(t)|^p \mu(dt)\Big)^\frac2{p} + |h_2|^2, \qquad  \vec{h_1}{h_2}\in \mc E_p,
 }
 while the first term defines the squared norm in $E_p$.

Note that for all  $p\in (1,2)$, $H$ is identically imbedded into ${E}_p$, and
for all $f\in H$, $\|f\|_{{E}_p} \lt \mu[0,T]^{(\frac1{p}-\frac12)} \|f\|_H$. Likewise, $\mc H$ is identically imbedded into
$\mc E_p$, and for all $f \in \mc H$, 
\aaa{
\lb{norms}
\|f\|_{\mc E_p} \lt (\mu[0,T]^{\frac2{p}-1} + 1)^\frac12 \|f\|_{\mc H}.
}

For the coefficients $\sg: [0,T] \x \mc E_p \to \Rnu^{n\x d}$ and $b: [0,T] \x \mc E_p \to \Rnu^n$ of the SDE \rf{sde},
the lifts $\hat \sg$ and $\hat b$ are defined by \rf{ep-lift}. Furthermore, 
$\sg_k = \sg e_k$, where $\{e_k\}$ is the standard basis of $\Rnu^d$, whose lift $\hat \sg_k$ is also defined by \rf{ep-lift}.
It is convenient to define the maps $\td \sg_k: [0,T] \x \mc E_p \to H$,
$(t,x) \mto  \ind_{[t,T]}(\fdot)\sg_k(t,x)$; $\td b$ is defined likewise. Remark that since $H\sub E_p$,
the maps $\td \sg_k$ and $\td b$ can be regarded as $E_p$-valued.

For a map $V: [0,T]\x \mc E_p \to \Rnu^n$ and its lift $\hat V: [0,T]\x \mc E_p \to \mc E_p$, defined by \rf{ep-lift},
$\pl_xV(t,x)$ and $\pl_x\hat V(t,x)$ always denote  Fr\'echet derivatives  with respect to the second argument, and,
 furthermore, $\pl^{l}_xV(t,x)$ and $\pl^{l}_x\hat V(t,x)$ denote the $l$-th order Fr\'echet derivatives.

Let $\mc F_t$ be the filtration generated by the Brownian motion $B_t = (B^1_t, \ldots, B^d_t)$, 
defined on a probability space $(\Omega, \mc F,\PP)$, completed by the $\PP$-null sets.  
For a Banach space $E$, let $\mc S_q([s_1,s_2], E)$, $q\gt 1$, denote the Banach space of 
$\mc F_t$-adapted $E$-valued 
stochastic processes $\xi_t$ such that $\E\sup_{t\in [s_1,s_2]}\|\xi_t\|_E^q<\infty$. 

Let $U_t$, $t\in[s_1,s_2]$, be an $E$-valued process. 
Then, $\|U\|_\infty  =  \sup_{t\in [s_1,s_2]}\|U_t\|_E$, and $\|U\|_\al$ denotes the H\"older seminorm
computed with respect to the norm of $E$ over $[s_1,s_2]$. 
If  we would like to emphasize that the norm (or seminorm) is computed over a subinterval
$[s_1,s_2]$ of $[0,T]$, we write $\|U\|_{\infty , [s_1,s_2]}$ and $\|U\|_{\al, [s_1,s_2]}$.  
Remark, that it is usually clear from the context, in which Banach space
 $U_t$ takes its values, so the aforementioned notation will not lead to a misunderstanding.

The number $\tau\in (0,T]$ denotes a time point in which we would like to prove the smoothness of the density for
the law of the solution to \rf{sde}; $\tau_0$ is an arbitrary point from $[0,\tau)$ with the property that  
$[\tau_0,\tau)\cap \supp \mu_s = \varnothing$.
\subsubsection*{$\bm X(t)$ vs $\bm X_t$ and similar notation}
Except for the Brownian motion $B_t$, we write $t$ as a subscript for the state-dependent $\mc E_p$-valued or
$\mc L(\mc E_p)$-valued processes  $X_t$, $Y_t$, $Z_t$, and other infinite-dimensional processes.
Furthermore, $X(t)$ indicates the $n$-dimensional state-dependent solution process for \rf{sde},
while $X^t$, defined as $X^t(\fdot) = X(t\we\fdot)$, indicates the path  of $X(\fdot)$ stopped at $t$. 

To improve the readability of formulas throughout  the article, we prefer to write $B_t$, rather than $B(t)$,
for the $d$-dimensional standard Brownian motion. 

\subsection{Lifting to $\mc E_p$}
Here we introduce an $\mc H$-valued SDE, which we regard as a lift of \rf{sde} to $\mc H$. 
 Since $\mc H$ is identically imbedded into $\mc E_p$,  $p\in (1,2)$, the $\mc H$-valued SDE can also be viewed as a lift to $\mc E_p$.
First, we state a result on the existence of a unique solution to \rf{sde} in $S_q([0,T],\Rnu^n)$.
\begin{lem}
\lb{lem11}
Assume that $\sg_k(t,x)$, $k=1,\ldots, d$, and $b(t,x)$ are maps $[0,T] \x \mc E \to \Rnu^n$
satisfying the Lipschitz and the linear growth conditions with respect to $x\in\mc E$ uniformly in $t$.
Then, \rf{sde} possesses a unique solution in $\mc S_q([0,T], \Rnu^n)$
for all $q\gt 1$.
\end{lem}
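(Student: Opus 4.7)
The plan is to apply the standard Picard iteration / Banach fixed-point argument in $\mc S_q([0,T], \Rnu^n)$, adapted to the path-dependent setting. First, I would define the solution map $\Phi$ on $\mc S_q([0,T], \Rnu^n)$ by
\ee{\Phi(X)(t) = x_0 + \int_0^t b(s, X^s, X(s))\, ds + \int_0^t \sg(s, X^s, X(s))\, dB_s,}
and verify it is well-defined. The composition with the stopped path $X^s$ automatically makes the integrands $\mc F_s$-adapted (only values $X|_{[0,s]}$ are used), so the It\^o integral is meaningful. The key norm identity that reduces the problem to the classical setting is
\ee{\|(X^t, X(t))\|_{\mc E} \lt 2 \sup_{s\lt t} |X(s)|,}
which converts Lipschitz and linear growth bounds in the $\mc E$-norm into standard estimates in the uniform norm of $X$ on $[0,t]$.

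Second, combining the linear growth assumption with the Burkholder--Davis--Gundy inequality yields an a priori bound of the form
\ee{\E \sup_{r\lt t} |\Phi(X)(r)|^q \lt C_q\Bigl(|x_0|^q + \int_0^t \bigl(1 + \E \sup_{u\lt s} |X(u)|^q\bigr)\, ds\Bigr),}
and Gr\"onwall's inequality then gives $\Phi(X)\in \mc S_q([0,T],\Rnu^n)$ whenever $X$ is in this space. An analogous computation using BDG, the above norm identity and the Lipschitz assumption produces
\ee{\E \sup_{r\lt t}|\Phi(X)(r)-\Phi(Y)(r)|^q \lt C_q L^q (t^{q} + t^{q/2})\, \E \sup_{r\lt t} |X(r) - Y(r)|^q.}
For $T_1>0$ small enough (depending only on $q$ and the Lipschitz constant $L$), the prefactor is strictly less than $1$, so $\Phi$ is a contraction on $\mc S_q([0,T_1],\Rnu^n)$, and Banach's fixed-point theorem provides a unique solution on $[0,T_1]$. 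Since $T_1$ is independent of the starting point, one iterates on $[T_1, 2T_1], [2T_1, 3T_1], \ldots$ by restarting from the terminal value, concatenating to obtain a unique global solution in $\mc S_q([0,T],\Rnu^n)$.

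No serious obstacle arises; the argument is classical. The only subtleties worth checking are (i) that the non-anticipative way in which the coefficients enter the equation preserves $\mc F_s$-adaptedness despite the full-path argument $X^s \in \C([0,T],\Rnu^n)$, which follows immediately from the definition of the stopped path, and (ii) that upon restarting at time $T_1$, the shifted coefficients retain the same Lipschitz constant when the already-constructed portion of the path is treated as a frozen parameter, so the same $T_1$ works at every stage and the iteration terminates in finitely many steps.
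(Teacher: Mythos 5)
Your proposal is correct and is precisely the standard Picard/contraction argument that the paper itself leaves as ``straightforward'' (its proof of Lemma \ref{lem11} consists of exactly that remark), so there is no divergence in approach. The only cosmetic point: your contraction and a priori estimates as written (with the factor $t^{q/2}$ from BDG plus H\"older) require $q\gt 2$; for $1\lt q<2$ the statement follows from the case $q=2$ via Jensen's inequality together with pathwise uniqueness, which holds by a localization--Gr\"onwall argument under the Lipschitz hypothesis alone.
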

\begin{proof}
The proof is straightforward.
\end{proof}
\begin{pro}
\lb{pro12}
Assume that $\sg_k(t,x)$, $k=1,\ldots, d$, and $b(t,x)$ are maps $[0,T] \x \mc H \to \Rnu^n$
satisfying the Lipschitz and the linear growth conditions with respect to $x\in\mc H$ uniformly in $t$.
Furthermore, we assume that $b$ and $\sg_k$ are continuous over $[0,T] \x \mc E$.
Then, \rf{sde} is equivalent to the  $\mc H$-valued SDE \rf{sde1}.
Moreover, $X_t$ is in $\mc S_q([0,T],\mc E_p)$ for all $q\gt 1$ and $p\in (1,2]$.
\end{pro}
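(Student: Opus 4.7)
The plan is to construct the $\mc H$-valued process $X_t = (X^t, X(t))^\tp$ directly from the $\Rnu^n$-valued solution to \rf{sde} furnished by Lemma \ref{lem11}, and to verify \rf{sde1} componentwise through a stochastic Fubini identification.

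First I would reduce the hypotheses of Proposition \ref{pro12} to those of Lemma \ref{lem11}. The identical embedding $\mc E \hookrightarrow \mc H$ is continuous since $\mu[0,T]<\infty$, so the Lipschitz continuity and linear growth of $\sg_k(t,\fdot)$ and $b(t,\fdot)$ on $\mc H$ restrict to the corresponding estimates on $\mc E$. Lemma \ref{lem11} then provides a unique $\mc F_t$-adapted solution $X(\fdot)\in\mc S_q([0,T],\Rnu^n)$ for every $q\gt 1$. Setting $X_t = (X^t, X(t))^\tp$, continuity of sample paths gives $\|X^t\|_H^2 \lt \mu[0,T] \sup_{r\in[0,T]}|X(r)|^2<\infty$ almost surely, so $X_t\in \mc H\sub \mc E_p$. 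Adaptedness of $X_t$ is inherited from that of $X(\fdot)$, and continuity of $t\mto X_t$ in $\mc E_p$ follows from pointwise continuity of $s\mto X(t\we s)$ together with dominated convergence in $E_p$.

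Next I would verify \rf{sde1}. Its $\Rnu^n$-component coincides with \rf{sde} by construction. For the $E_p$-component I would argue pointwise: since $\ind_{[s,T]}(r) = \ind_{[0,r]}(s)$, for each fixed $r\in[0,T]$,
\aa{
\int_0^t \ind_{[s,T]}(r)\,\sg(s,X_s)\,dB_s = \int_0^{t\we r}\sg(s,X_s)\,dB_s = X(t\we r) - x_0 - \int_0^{t\we r} b(s,X_s)\,ds,
}
and the right-hand side is $X^t(r) - x_0 - \int_0^t \ind_{[s,T]}(r)\, b(s,X_s)\,ds$. I would then invoke the stochastic Fubini theorem (e.g., Da Prato--Zabczyk) to identify the $H$-valued stochastic integral $\int_0^t \ind_{[s,T]}(\fdot)\sg(s,X_s)\,dB_s$ with the above pointwise expression; its hypotheses reduce to the estimate $\E\int_0^T\|\ind_{[s,T]}\sg(s,X_s)\|_H^2\,ds\lt \mu[0,T]\,\E\int_0^T|\sg(s,X_s)|^2\,ds<\infty$, which follows from the linear growth of $\sg$ and the finite moments of $\sup_r|X(r)|$ supplied by Lemma \ref{lem11}. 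The drift part is handled analogously by classical Fubini.

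For the moment bound in $\mc E_p$, the elementary estimate $\|X_t\|_{\mc E_p}^2 \lt (\mu[0,T])^{2/p}\sup_r|X(r)|^2+|X(t)|^2$ yields $\E\sup_{t\in[0,T]}\|X_t\|_{\mc E_p}^q<\infty$ for every $q\gt 1$ via Lemma \ref{lem11}. The converse direction of the equivalence is by projection: any $\tilde X_t\in\mc S_q([0,T],\mc H)$ solving \rf{sde1} projects through $\Pi_n$ to a solution of \rf{sde}, which must coincide with $X(t)$ by uniqueness, and its $E_p$-component is then pinned to $X^t$ by the same stochastic Fubini identification. The main technical obstacle I anticipate is precisely the stochastic Fubini step, where one has to set up a predictable $H$-valued integrand, verify its square integrability, and track carefully the transition between the $\mc H$- and $\mc E_p$-norms via \rf{norms}; the remaining ingredients are standard moment estimates and the uniqueness from Lemma \ref{lem11}.
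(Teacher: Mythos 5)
Your proposal is correct and follows essentially the same route as the paper: build $X_t=(X^t,X(t))$ from the solution supplied by Lemma \ref{lem11}, identify the $H$-valued stochastic integral with the pointwise (stopped) integrals, and read off the moment bound in $\mc E_p$. The only difference is that the paper establishes this pointwise identification (its identity \rf{id45}) by hand, via simple-function approximation and a Cauchy-sequence argument in $\mc S_q([0,T],H)$ versus $\mc S_q([0,T],\C([0,T],\Rnu^n))$, whereas you outsource that single step to a stochastic Fubini theorem, which is an equally valid justification.
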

\begin{rem}
\rm Remark that  \rf{sde} is included in \rf{sde1} as the equation for the second component of $X_t$. 
Furthermore, since the solution to \rf{sde} has a.s. continuous paths, the component $X^t$ of $X_t$
(in spite of being $H$-valued) is well-defined.
\end{rem}
\begin{proof}[Proof of Proposition  \ref{pro12}]
Let $X(t)$ be the solution to \rf{sde} which exists by Lemma \ref{lem11}.
Since $\ind_{[s,T]}(r) = \ind_{[0,r]}(s)$, by the definition of $\td \sg_k$ and $\td b$, we obtain
\mm{
X^{t}(r) = X(t\we r)
= x_0+ \int_0^{r\we t} b(s,X^s,X(s)) ds +\sum_{k=1}^d \int_0^{r\we t} \sg_k(s,X^s,X(s)) dB^k_s \\ =
x_0 + \int_0^t \td b(s,X^s,X(s))(r) ds +\sum_{k=1}^d \int_0^t \td\sg_k(s,X^s,X(s))(r) dB^k_s.
}
This implies that the SDE \rf{sde} will be equivalent to the $H$-valued SDE
\aaa{
\lb{sde4}
X^{t} = x_0+ \int_0^t \td b(s,X^s,X(s))ds +\sum_{k=1}^d \int_0^t \td\sg_k(s,X^s,X(s)) dB^k_s
}
if we prove that the dependence on $r$  in the stochastic integral can be taken outside of the integral sign.
In other words, to show that \rf{sde} and \rf{sde4} are equivalent, it suffices to prove that 
\aaa{
\lb{id45}
\Big(\int_0^t \td\sg_k(s,X^s, X(s)) dB^k_s\Big)(r) = \int_0^t \td\sg_k(s,X^s, X(s))(r) dB^k_s
\quad \text{for each} \; k \; \text{a.s.} 
}
A similar identity for the integrals with respect to $ds$ is obvious.
Considering the approximation of $\sg_k(s,X^s,X(s))$ by simple functions
of the form $\sg^N_k(s) = \sum_{i=1}^N \sg_k(s_i,X^{s_i},X(s_i))\ind_{(s_{i-1},s_{i}]} + \ind_{\{0\}} \sg_k(0,x_0,x_0)$,
we obtain that for all $r$,
\aa{
\Big(\int_0^t \td \sg^N_k(s) dB^k_s\Big)(r) = \int_0^t \td \sg^N_k(s)(r) dB^k_s = \int_0^{r\we t} \sg^N_k(s) dB^k_s.
}
This implies that $f_N(t) = \int_0^t \td \sg^N_k(s) dB^k_s$ is a Cauchy sequence in $\mc S_q([0,T],H)$ since it is a Cauchy sequence in
$\mc S_q([0,T],\C([0,T],\Rnu^n))$.
But the limit of $\{f_N\}$ in  $\mc S_q([0,T],H)$
is $\int_0^t \td \sg_k(s) dB^k_s$, where the stochastic integral is $H$-valued.
This implies that $\{f_N\}$ has the same limit in $\mc S_q([0,T],\C([0,T],\Rnu^n))$. 
In the above equation, 
passing to the limit  as $N\to\infty$ in the aforementioned space, we obtain \rf{id45}.

Next, we note that the $H$-valued SDE with respect to $U_t$
\aa{
U_t = x_0 + \int_0^t \td b(s,U_s, X(s)) ds +\sum_{k=1}^d \int_0^t \td\sg_k(s,U_s, X(s)) dB^k_s.
}
possesses a unique solution. Hence, $U_t = X^t$, and the SDEs \rf{sde} and \rf{sde4} are equivalent.
Finally, we note  that the pair of equations \rf{sde}--\rf{sde4} is exactly equation \rf{sde1}.

The fact that $X_{\fdot}\in \mc S_q([0,T],\mc E_p)$ follows immediately from Lemma \ref{lem11}.
\end{proof}

\subsection{Standing assumptions}
\lb{standing-assumptions}

Fix $p \in (1,\frac32)$ and assume the following:
 \bi
 \item[\bf (A1)] For each $t\in [0,T]$ and $k=1,\ldots, d$, $\sg_k(t,\fdot)$
 and $b(t,\fdot)$ are infinitely Fr\'e\-chet differentiable maps  $\mc E_p \to \Rnu^n$,
whose first-order derivatives are bounded uniformly in $t$ and the higher-order derivatives,
when restricted to $\mc E= \C([0,T],\Rnu^n) \oplus \Rnu^n$,
have at most polynomial growth with respect to the second argument uniformly in $t$; that is,
for each $l \gt 2$, 
 $|\pl^{l}_x b(t,x)| +\sum_k |\pl^{l}_x\sg_k(t,x)| \lt C_l (1+\|x\|^{q_l}_{\mc E})$
 for some $q_l\in\Nnu$  and $C_l>0$. Moreover, $\sg_k$ and $b$ are continuous over
 $[0,T] \x \mc E$.
\item[\bf (A2)]   There exists $\mc \tau_0\in [0,\tau)$ such that 
$[\tau_0,\tau) \cap \supp \mu_s = \varnothing$.
\item[\bf (A3)]   For each $(t,x) \in [\tau_0,\tau]\x \mc E$ and $k=1,\ldots, d$, $\sg_k(t,x)$, 
$b(t,x)$,  and their Fr\'echet derivatives in $x$ of all orders are differentiable with respect to $t$;
all the aforementioned derivatives, including the derivatives in $t$, 
are continuous over $[\tau_0,\tau]\x \mc E$.
\item[\bf (A4)]   
 The derivatives in $t$, mentioned in (A3),
have at most polynomial growth with respect to $x$, i.e., they
are bounded  by $1+\|x\|^q_{\mc E}$ (for some $q\in \Nnu$) multiplied by a constant.
\ei
\begin{rem}
\rm
(A1) implies the statements of Lemma \ref{lem11} and Pro\-po\-sition \ref{pro12}.
\end{rem}

\subsection{Examples of path-dependent coefficients}
\lb{examples}
Here we give examples of path-dependent coefficients $\sg_k$ and $b$ satisfying  assumptions (A1)--(A4).
In the examples below, both coefficients are defined via the maps  $A_k$ and $\Phi_i$.
One immediately verifies that under the
assumptions on $A_k$ and $\Phi_i$ that we formulate in the examples,
 $\sg_k$ and $b$ satisfy (A1)--(A4).

\subsubsection*{Coefficients containing integrals}
Let $\nu_i$, $i=1, \ldots, N$, be signed measures with the property $|\nu_i| \lt K \mu$ for some constant $K$.
Further let $\zeta$ be a map $[0,T] \x E_p \to \Rnu^k$ or 
$E_p \to \Rnu^k$ such that its $i$-th component $\zeta^i$ is given by one of the expressions 
\aa{
 \zeta^i(t,x) =  \int_{[0,t)} \Phi_i(s,x(s))\nu_i(ds)  \quad \text{or} \quad    \zeta^i(x) =  \int_{[0,T]} \Phi_i(s,x(s))\nu_i(ds),
}
where each $\Phi_i: [0,T] \x \Rnu^n \to \Rnu$ has bounded first-order derivatives  w.r.t. the second argument
and higher-order derivatives of at most polynomial growth uniformly w.r.t. the first argument.
Let the coefficients $\sg_k$ (or $b$) be defined on $[0,T]\x E_p \x \Rnu^n$ as follows:
\aa{
 \sg_k(t,x,y) = A_k(t,\zeta(t,x),y) \quad \text{or} \quad   \sg_k(t,x,y) = A_k(t,\zeta(x),y),
}
where $A_k$ are defined on appropriate spaces and
have bounded first-order derivatives  w.r.t. the second and the third arguments, and,
furthermore, the higher-order derivatives w.r.t. the same arguments are of at most polynomial growth.
Moreover, $A_k$ and all the aforementioned derivatives are differentiable in $t$ and the respective derivatives
have at most polynomial growth w.r.t. the second and the third arguments.
Finally,  all the aforementioned derivatives are continuous. 
\begin{rem}
\rm 
Remark that by the inequality $|\nu_i| \lt K \mu$,
the support of the singular component of $\nu_i$ is contained in $\supp \mu_s$. 
This implies that 
\aa{
\int_{[0,t)} \hspace{-1mm}\Phi_i(s,x(s))\nu_i(ds) = \int_{[0,\tau_0)}\hspace{-1mm} \Phi_i(s,x(s))\nu_i(ds) 
+ \int_{[\tau_0,t)}\hspace{-1mm} \Phi_i(s,x(s))\rho(s) ds, \;\; t\in [\tau_0,\tau],
}
where $\rho$ is the density of $\nu_i$ w.r.t. Lebesgue measure on  $[\tau_0,\tau]$.
Therefore, $\zeta^i(t,x)$ is differentiable in $t$ on $[\tau_0,\tau]$.
\end{rem}

\subsubsection*{Coefficients containing multiple integrals}
Let $\nu_i$ be a signed measure  on $[0,T]^N$ such that for any Borel set $A\sub [0,T]$ and $j\in \{1,\ldots, N\}$,
 $|\nu_i|([0,T]^{j-1} \x A \x [0,T]^{N-j})\lt K \mu(A)$, where $K$ is a constant. 
Let $\sg_k(t,x,y)$ be defined on $[0,T]\x E_p \x \Rnu^n$ by $A_k(t,\zeta(x),y)$  with
the $i$-th component of $\zeta$ being given by a multiple integral
\aa{
\zeta^i(x)  = \int_{[0,T]^N} \Phi_i(s_1,\ldots, s_N,x(s_1),  \ldots, x(s_N)) \nu_i(ds_1\ldots ds_N),
}
where each $\Phi_i: [0,T]^N \x \Rnu^{n N} \to \Rnu$ 
 has bounded first-order partial  derivatives in  the last $N$ arguments, 
and the higher-order derivatives w.r.t. the same arguments have at most polynomial growth
 uniformly w.r.t. the first $N$ arguments.
The assumptions on $A_k$ are the same as in the previous example.
\subsubsection*{Coefficients with a continuous delay}
Let $\sg_k(t,x,y)$ be defined on $[0,T]\x E_p \x \Rnu^n$ as $A_k(t,\zeta(t,x),y)$, where
 the $i$-th component of $\zeta$ is
\aa{
\zeta^i(t,x)  = \int_{-T}^0 \Phi_i(t+s,x(t+s)) \rho(s) ds.
}
Take $\mu = \la + \dl_0$ (where $\dl_0$ is the Dirac measure concentrated at zero) 
and set $x(t) = x(0)$ for $t<0$. Above,
 $\Phi_i: [-T,0] \x \Rnu^n \to \Rnu$ is a function with the same properties as in the first example,
$\rho$ is a bounded differentiable function $[-T,0]\to \Rnu$.
One can transform the above expression for $\zeta^i$ as follows:
\aa{
\zeta^i(t,x)  = \int_{-T+t}^t  \Phi_i(s,x(s)) \rho(s-t) ds
}
which shows that $\zeta^i$ is differentiable in $t$. To see that $\zeta^i$ has  a bounded Fr\'echet derivative 
with respect to the second argument, we note that 
$\pl_x\zeta^i(t,x)h = \int_{-T}^0\pl_2 \Phi_i(t+s,x(t+s))\{h(0)\ind_{[-T,-t]}+ h(t+s)\ind_{(-t,0]}\} \rho(s) ds$, where  $\pl_2$
denotes the partial derivative w.r.t. the second argument and $h\in E_p$.
Thus,
 \mmm{
 \lb{zest9}
 |\pl_x\zeta^i(t,x)h| \lt K_1 \Big(
 \int_{-t}^0 |h(t+s)| |\rho(s)| ds  + \int_{-T}^{-t} |h(0)|   |\rho(s)| ds\Big) \\
 \lt K_2\Big( \int_0^t |h(s)| |\rho(s-t)| ds + |h(0)|\Big) 
  \lt K_3 \Big(\int_0^T |h(s)|^p \mu(ds)\Big)^\frac1{p},
 }
where $K_1, K_2, K_3$ are positive constants. 
The higher-order derivatives of $\zeta^i$ can be estimated likewise except we get  
 a factor of the form $1+ \sup_{[0,T]}|x(s)|^l$ on the right-hand side. 
 The assumptions on $A_k$ are the same as in the first example.
\subsubsection*{Continuous delay given by a multiple integral}
Suppose $\sg_k$ is given as in the previous example, i.e., via $A_k$ depending on $\zeta(t,x)$, where 
the $i$-th component of $\zeta$ is defined as follows:
\mm{
\zeta^i(t,x) = \\ = \int_{[-T,0]^N}  \Phi_i(t+s_1,\ldots, t+s_N,x(t+s_1), \ldots, x(t+s_N)) \rho(s_1, \ldots, s_N) ds_1\ldots ds_N,
}
with $x(t) = x(0)$ for $t<0$ ($\mu = \la + \dl_0$). Above, $\Phi_i: [-T,0]^N \x \Rnu^{n N}\to \Rnu$ is 
a function with the same properties as in the second example,
$\rho: [-T,0]^N \to \Rnu$ is a differentiable function with the property
\aa{
\int_{[-T,0]^{N-1}} |\rho(s_1, \ldots, \underbrace{s-t}_j, \ldots, s_N)| ds_1 \ldots ds_N \lt K \quad \forall s\in [0,t]
}
fulfilled for each $j$; the integration on the left-hand side excludes $ds_j$.  It is straightforward to see that $\zeta^i(t,x)$ is 
Fr\'echet-differentiable 
w.r.t. $x$ and the derivative is bounded. Similar to \rf{zest9}, we obtain a bound
on the first derivative of $\zeta^i$. By the boundedness of the first partial derivatives of $\Phi_i$, there exists a constant $K_1>0$ such that
\aa{
|\pl_x\zeta^i(t,x)h| \lt K_1 \sum_{j=1}^N \int_{[-T,0]^N}|h(t+s_j)| |\rho(s_1, \ldots, s_N)|\,  ds_1 \ldots ds_N.
}
Introducing 
$\rho_j(s) =  \int_{[-T,0]^{N-1}} |\rho(s_1, \ldots, \underbrace{s}_j, \ldots, s_N)| ds_1\ldots ds_{j-1} ds_{j+1} \ldots ds_N$,
we obtain that the right-hand side of the previous inequality can be bounded by
\aa{
\sum_{j=1}^N\Big( \int_0^t|h(s_j)|\, \rho_j(s_j-t) \, ds_j + |h(0)|\int_{-T}^{-t}  \rho_j(s_j)\, ds_j\Big)
}
multiplied by a constant, which implies that $\pl_x\zeta^i(t,x)$ is bounded by the same argument as in \rf{zest9}.
The assumptions on $A_k$ are as in the previous example.

\subsubsection*{One more type of continuous delay}
In the situation described in the first example, i.e.,  $\sg_k(t,x,y) = A_k(t,\zeta(t,x),y)$, 
take $h\in (0,T)$ and consider
 \aa{
  \zeta^i(t,x) =  \int_{[0,t-h)} \Phi_i(s,x(s))\nu_i(ds).
 }
In addition to the assumptions formulated in the first example, 
we assume that $\nu_i\{(-h,0)\} = 0$, $\mu\{0\}\ne 0$, and 
$[\tau_0-h,\tau)\cap\supp \mu_s = \varnothing$.

\subsubsection*{Coefficients depending on the path at a finite number of points}
Consider the function $\sg_k$ of the form
\aaa{
\lb{fnp}
\sg_k(s, y, x) = A_k(s,\underbrace{y(t_0),y(t_1), \ldots y(t_{i(s)}),  0, \ldots,0, x}_{m+2}), \quad y\in E_p, \; x\in\Rnu^n,
}
where  $0 = t_0 < t_1 < \dots < t_m = T$,
$A_k: [0,T] \x \Rnu^{m+2}\to \Rnu^n$, and
$i(s)\in \{0,1,\ldots, m-1\}$ is such that $t_{i(s)} \lt s < t_{i(s)+1}$; $i(t_m) = m$.

For this type of path dependence, we set $\mu$ to be a discrete measure
concentrated on the set $\{0 = t_0 < t_1 < \dots < t_m = T\}$. 

Relative to the function $A_k$ we assume (for notational convenience, we represent it as $A_k(t,x)$, where $x\in\Rnu^{m+2}$)
that it is infinitely differentiable in $x$; the first-order derivatives are bounded and
the higher-order derivatives have at most polynomial growth in $x$ uniformly in $t$. Furthermore, $A_k$ and all its derivatives in $x$
 are differentiable in $t$ and the respective derivatives have at most polynomial growth in $x$ uniformly in $t$.
Finally, all the aforementioned derivatives are continuous in all arguments.

\subsubsection*{Coefficients depending on coordinates in a Schauder basis}
Let $\mu = \la$.
It is known that $L_p([0,T]\to\Rnu^n, \la)$ possesses a Schauder basis. 
Let $b_i^*$, $i=1,2, \ldots$, be coordinate functionals  and let $\al_i = \<b_i^*,x\>$, where $x\in\mc E_p$.
Define $\sg_k(t,x) = A_k(t,\al_1,\al_2, \ldots)$, where $A_k$ has partial derivatives of all orders w.r.t. all $\al_i$.
For simplicity, consider the case when $A_k$ depends explicitly just on a finite number of coordinates $\al_i$.
Assume that $A_k$ is infinitely-differentiable in all arguments $\al_i$ and that
the first-order partial derivatives are bounded. 
Also assume that
all the derivatives with respect to $\al_i$ are differentiable with respect to $t$. Furthermore, we assume that
all the aforementioned derivatives are continuous and have at most polynomial growth in all arguments $\al_i$
uniformly in $t$.

 \subsubsection*{Why is it difficult to treat SDEs containing $\bm{\sup_{[0,t]}|x(s)|}$?}
 Suppose $\sg_k(t,x,y) = A_k(t,\sup_{[0,t]}|x(s)|, y)$, where $x\in \C([0,T],\Rnu^n)$.
  First, we note that $\sup_{[0,t]}|x(s)|$ is not Fr\'echet differentiable as a map $\C([0,T],\Rnu^n) \to \Rnu$.
 Indeed, according to \cite{banach}, it is not even Gateaux differentiable except at some points (called peaking functions \cite{cox}).
  
 This implies that $\C([0,T],\Rnu^n) \oplus \Rnu^n$ cannot be taken as the space of the lift.
One may think of lifting \rf{sde} to $\mc E_p$ and replacing $\sup_{[0,t]}|x(s)|$ with ${\rm ess \, sup}_{[0,t]}|x(s)|$. 
However, according to \cite{mazur},
  ${\rm ess\, sup}_{[0,t]}|x(s)|$  as a function $L_\infty([0,T],\Rnu^n)  \to \Rnu$ 
is not Gateaux differentiable in some directions  at any point $x \in L_\infty([0,T],\Rnu^n)$,
where $L_\infty([0,T],\Rnu^n)$ denotes the Banach space of essentially bounded functions. 

The same applies to coefficients given by  $A_k(t,\sup_{[0,t]}x(s), y)$ in the
 case $n=1$, where $x\in \C([0,T],\Rnu)$. 
Indeed, if $x$ is a non-negative function  which is not a peaking function, then
$\sup_{[0,t]}x(s)$ is not Gateaux differentiable along some directions in $\C([0,t],\Rnu)$.

 \section{Inverse operator of the Jacobian}
 \lb{s3}
Consider the pair of SDEs  
\eq{
\lb{sde11}
U_t = \xi + \int_0^t \td b(s,U_s, V(s)) ds + \sum_{k=1}^d \int_0^t \td \sg_k(s,U_s, V(s)) dB^k_s,\\
V(t) = z + \int_0^t  b(s,U_s, V(s)) ds + \sum_{k=1}^d \int_0^t \sg_k(s,U_s, V(s)) dB^k_s,
}
where $(\xi,z)\in \mc E_p$ (the initial condition is not necessarily a constant vector).
Remark that the stochastic integrals
in the equation for $U_t$ are $H$-valued, and thus, \rf{sde11} is well-defined.
Due to the boundedness of the Fr\'echet derivatives $\pl_x\sg_k$ and $\pl_x b$ (Assumption  (A1)) 
and inequality \rf{norms} relating the norms in $\mc E_p$ and $\mc H$, 
it is straightforward to prove that \rf{sde11} has a unique solution $(U_t,V(t))$ in $\mc E_p$.
Also remark that if $(\xi,z) = (x_0,x_0)$,  then $U_t = X^t$ and $V(t) = X(t)$. 
We will be interested in the Fr\'echet derivative of $X_t$ at point $(x_0,x_0)$ with respect to vectors from $\mc E_p$.

\subsection{Remarks on stochastic integration in Banach spaces}
In what follows, we will be dealing with SDEs in 2-smooth Banach spaces and $\gm$-radonifying operators,
so we recall the definitions.

A Banach space $E$ is called 2-smooth if there exists a constant $C>0$ such that for all $x,y\in E$,
\begingroup
\setlength{\belowdisplayskip}{4pt}
\setlength{\abovedisplayskip}{2pt} 
\aa{
\|x+y\|^2 + \|x-y\|^2 \lt 2\|x\|^2 + C\|y\|^2.
}
\endgroup
A Banach space $E$ is called martingale-type 2 if there exists a constant $\bar C>0$ such that for any $E$-valued 
martingale $\{M_n\}_{n\in\Nnu}$, it holds that
\begingroup
\setlength{\belowdisplayskip}{2pt}
\setlength{\abovedisplayskip}{6pt} 
\aaa{
\lb{mt2}
\sup_k \E\|M_k\|^2 \lt \bar C \sum_k \E\|M_k-M_{k-1}\|^2.
}
\endgroup
It was proved in \cite{pisier} that $E$ is 2-smooth if and only if it is martingale-type 2.

Let  $F$ be a (real separable) Hilbert space and $E$ be a Banach space. 
We call an operator $T\in \mc L(F,E)$ $\gm$-radonifying  (cf. \cite{VN}) if for any orthonormal basis
$\{e_n\} \sub F$ and for any sequence $\{\gm_n\}$ of independent standard Gaussian random variables, 
\begingroup
\setlength{\belowdisplayskip}{3pt}
\setlength{\abovedisplayskip}{3pt} 
\aa{
\|T\|^2_{\gm(F,E)} = \E\Big\|\sum_{n=1}^\infty \gm_n T e_n\Big\|_E^2 < \infty.
} 
\endgroup
It can be shown that the expression on the right-hand side is independent of the choice of $\{e_n\}$ and $\{\gm_n\}$, and defines a norm
in the space of $\gm$-radonifying  operators. We denote this space by
$\gm(F,E)$. It is known that (see \cite{VN}) $\gm(F,E)$ is a Banach space which is identically imbedded into $\mc L(F,E)$,
and the imbedding operator is contractive.

\subsubsection{Stochastic integration in a 2-smooth Banach space}
Let $E$ be a 2-smooth Banach space and $F$ be a Hilbert space. 
A stochastic integral $\int_0^t \xi_s dW_s$ can be defined for a progressively measurable integrand $\xi_s$
with values in $\gm(F,E)$ and an $F$-cylindrical Brownian motion  $W_s$.
Let $\mc P = \{0=s_0<s_1< \dots < s_N = t\}$ be a partition.
Define the integral for a simple $\gm(F,E)$-valued integrand $\zeta_s = \sum_{k=0}^{N-1} \zeta_{s_k} \ind_{(s_k,s_{k+1}]}(s)$
($\zeta_{s_k}$ is $\mc F_{s_k}$-measurable)
as follows:
\begingroup
\setlength{\belowdisplayskip}{3pt}
\setlength{\abovedisplayskip}{1pt} 
\aa{
\int_0^t \zeta_s dW_s =  \sum_{k=0}^{N-1} \zeta_{s_k} (W_{s_{k+1}} - W_{s_k}),
}
\endgroup
where $(f\ox x)W_t = (W_t f)\ox x$ for $f\ox x\in F\ox E$.
Further define the class $\mc M_2(\gm(F,E))$ of progressively measurable processes $[0,T] \x \Om \to \gm(F,E)$ such that
 $\E\int_0^T \|\zeta_s\|_{\gm(F,E)}^2 ds < \infty$.
 Proposition \ref{pro8999} below follows from Theorems 4.6 and 4.7 in \cite{VN1}
 (see also Proposition 2.1 in \cite{ZB}).
 \begin{pro}
\lb{pro8999}
 Let $E$ be a 2-smooth Banach space and $F$ be a Hilbert space.
Then, the integral $\int_0^t \zeta_s dW_s$ can be uniquely extended 
from the space of simple integrands to $\mc M_2(\gm(F,E))$. Furthermore,
$\int_0^t \zeta_s dW_s$  has a continuous version, and for each $q\gt 2$,
there exists a constant $C_q>0$ such that 
 \aa{
 \E\sup_{t\in [0,T]}\Big\|\int_0^t \zeta_s dW_s\Big\|^q_E \lt C_q\, \E \Big(\int_0^T \|\zeta_s\|^2_{\gm(F,E)} \, ds\Big)^{\frac{q}2}.
 }
\end{pro}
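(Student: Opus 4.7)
The plan is to follow the classical It\^o construction, with the martingale-type $2$ property replacing Hilbert orthogonality. First, I would check directly on simple integrands $\zeta_s = \sum_{k=0}^{N-1}\zeta_{s_k}\ind_{(s_k,s_{k+1}]}(s)$ that the process $t\mto\int_0^t \zeta_s dW_s$ is an $E$-valued $\mc F_t$-martingale; this uses only the $\mc F_{s_k}$-measurability of $\zeta_{s_k}$ together with the independence and centeredness of the $W$-increments.

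Second, I would establish the key $L^2$-estimate
\eee{\lb{l2est}
\E\Big\|\int_0^t \zeta_s dW_s\Big\|^2_E \lt \bar C\, \E \int_0^t \|\zeta_s\|^2_{\gm(F,E)}\, ds
}
for simple $\zeta$. Applying \rf{mt2} to the discrete martingale $M_k = \sum_{j=0}^{k-1}\zeta_{s_j}(W_{s_{j+1}}-W_{s_j})$ gives
\ee{
\E\|M_N\|_E^2 \lt \bar C \sum_{j=0}^{N-1}\E\|\zeta_{s_j}(W_{s_{j+1}}-W_{s_j})\|_E^2,
}
and then conditioning each term on $\mc F_{s_j}$ and using that $W_{s_{j+1}}-W_{s_j}$ is an $F$-cylindrical Gaussian of variance parameter $(s_{j+1}-s_j)$ reduces the summand, by the very definition of the $\gm$-radonifying norm, to $(s_{j+1}-s_j)\E\|\zeta_{s_j}\|^2_{\gm(F,E)}$. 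Summing and recognizing the Riemann sum yields \rf{l2est}. Since simple integrands are dense in $\mc M_2(\gm(F,E))$, the map $\zeta\mto\int_0^{\fdot}\zeta_s dW_s$ extends uniquely and continuously to $\mc M_2(\gm(F,E))$.

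Third, to obtain the continuous version and the $L^q$-supremum bound for $q\gt 2$, I would apply Doob's maximal inequality for $E$-valued martingales to the simple-integrand version to get a supremum analogue of \rf{l2est}, and then invoke a Burkholder--Davis--Gundy-type inequality in $2$-smooth Banach spaces: for an $E$-valued martingale $\{M_k\}$ one has $\E\sup_k\|M_k\|_E^q \lt C_q \E\bigl(\sum_k\|M_k-M_{k-1}\|_E^2\bigr)^{q/2}$. Applied to the discretization of $\int_0^t \zeta_s dW_s$ along a partition and passing to the limit via a uniform $L^q$-tightness argument produces the asserted continuous version together with the bound
\ee{
\E\sup_{t\in [0,T]}\Big\|\int_0^t\zeta_s dW_s\Big\|_E^q \lt C_q\, \E\Big(\int_0^T\|\zeta_s\|^2_{\gm(F,E)}\, ds\Big)^{q/2}.
}
A standard approximation, first on simple $\zeta$ with uniform in $N$ constants, then by density in $\mc M_2(\gm(F,E))$, transfers the inequality to general integrands and simultaneously produces the continuous modification as an a.s.\ uniform limit of continuous processes.

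The main obstacle is the vector-valued BDG inequality in step three: in a Hilbert space it is immediate from orthogonality, but for a general $2$-smooth Banach space $E$ it requires the Pisier characterization and a careful square-function argument for discrete martingales, with constants independent of the partition. This is precisely the technical piece that the authors prefer to import from \cite{VN1} and \cite{ZB} rather than reprove. Once this inequality is granted, the $L^2$-extension and the continuity are routine.
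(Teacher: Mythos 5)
The paper does not actually prove this proposition: it is imported wholesale from Theorems 4.6 and 4.7 of \cite{VN1} (see also Proposition 2.1 of \cite{ZB}), and your outline is essentially the standard construction underlying those cited results, so in substance you and the paper are leaning on the same external input. Your steps 1 and 2 are correct: the martingale-type 2 inequality plus conditioning on $\mc F_{s_j}$, together with the fact that for fixed $T\in\gm(F,E)$ and an independent $F$-cylindrical Gaussian increment of variance $s_{j+1}-s_j$ one has conditional second moment $(s_{j+1}-s_j)\|T\|^2_{\gm(F,E)}$, gives the $L^2$ estimate and the unique extension to $\mc M_2(\gm(F,E))$. The one place where your sketch is loose is step 3 for $q>2$: the discrete Burkholder-type inequality in a martingale-type 2 space leaves you with the pathwise square function $\sum_k\|\zeta_{s_k}(W_{s_{k+1}}-W_{s_k})\|_E^2$ inside an $L^{q/2}$ expectation, and you cannot simply ``condition each term'' there as you did for $q=2$, since conditioning does not commute with the power $q/2$; replacing this square function by $\int_0^T\|\zeta_s\|^2_{\gm(F,E)}ds$ requires an additional conditional Gaussian moment-equivalence (Kahane--Khintchine) or decoupling argument, or alternatively an It\^o-formula argument on the 2-smooth norm. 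Since you explicitly defer the vector-valued maximal inequality to \cite{VN1} and \cite{ZB} anyway — and those references prove precisely the maximal estimate with the $\gm$-norm on the right-hand side, not merely the discrete BDG — the proposal is acceptable provided you import that full maximal inequality rather than only the discrete square-function bound; as written, the intermediate reduction is the only genuine gap.
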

\begin{rem}
\rm In what follows, the above inequality will be useful in the particular case when $W_s$ is a one-dimensional
standard Brownian motion and $F=\Rnu$. Namely, if $E$, as before, is a 2-smooth Banach space and $\xi_s$
takes values in $E$, it holds that 
\aaa{
\lb{f7777}
 \E\sup_{t\in [0,T]}\Big\|\int_0^t \zeta_s dW_s\Big\|^q_E \lt C_q\, \E \Big(\int_0^T \|\zeta_s\|^2_E \, ds\Big)^{\frac{q}2}.
 }
\end{rem}
\subsubsection{Stochastic integration in $\mc L(\mc E_p,\mc H)$}
\lb{st-int3}
Here we show how one can define an $\mc L(\mc E_p,\mc H)$-valued stochastic integral $\int_0^t \zeta_s dB^1_s$ 
for the class of integrands $\zeta_s$ with values in $\mc L(\mc E_p,\mc H)$ possessing the property that
$\zeta^*$  is a progressively measurable map $[0,T] \x \Om \to \gm(\mc H, \mc E_p^*)$. 

Remark that
$\mc E_p^*$ is a 2-smooth Banach space. From the definition of the norm in $\gm(\mc H,\mc E^*_p)$, it follows that
the latter is also a 2-smooth Banach space.
By Proposition \ref{pro8999}, the stochastic integral $\int_0^t \zeta_s^* dB^1_s$ is well-defined 
as an element of $\gm(\mc H, \mc E_p^*)\sub \mc L(\mc H, \mc E^*_p)$. Therefore, we define
\aaa{
\lb{LEpH}
\int_0^t \zeta_s dB^1_s = \Big(\int_0^t \zeta_s^* dB^1_s\Big)^*.
}
Now let $\zeta_t = \sum_{k=1}^d \zeta^k_t \ox e_k$ be 
such that  for each $k$, $(\zeta^k)^*$  is a progressively measurable map $[0,T] \x \Om \to \gm(\mc H, \mc E_p^*)$.
The stochastic integral with respect to a $d$-dimensional Brownian motion $B_t$ is then defined as follows:
$\int_0^t \zeta_s dB_s = \sum_{k=1}^d  \Big(\int_0^t (\zeta^k_s)^* dB^k_s\Big)^*$.

\subsection{SDEs for  the Jacobian and its inverse}
\lb{sde-inverse}
Let $X_t$ be the solution to \rf{sde1}.
Consider the SDE  
\aaa{
 \lb{eq-y}
Y_t =I + \int_0^t \pl_x\hat b(s,X_s) Y_s\, ds + \int_0^t \pl_x \hat\sg(s,X_s) Y_s dB_s.
}
Here $I$ is the identity operator on $\mc E_p$ and
$\hat\sg(\fdot) = \sum_{k=1}^d \hat\sg_k(\fdot) \ox e_k$, where
 $\{e_k\}$ is the standard basis of $\Rnu^d$.
 The SDE \rf{eq-y} is regarded as $\mc L(\mc E_p)$-valued.
 Remark that a priori we do not know why the stochastic integral in the above equation makes sense. 
 However, due to Proposition \ref{pro90-},  following below,  the stochastic integral in \rf{eq-y} is well-defined
 in the sense of paragraph \ref{st-int3}.
 \begin{rem}
\rm
Although it is possible to prove that $Y_t$, defined as the solution to \rf{eq-y}, is, indeed, the Fr\'echet derivative 
of $X_t$ at point $(x_0,x_0)$
with respect to vectors from $\mc E_p$, it is only important for us that $Y_t$ is the solution to \rf{eq-y}. We will refer to $Y_t$
as the Jaco\-bian of $X_t$.
\end{rem}
\begin{rem}
\lb{r32}
\rm
Remark that $Y_t \vec{e_i}{e_i} = \vec{\pl_i X^t}{\pl_i X(t)}$.
This makes a connection between the infinite-dimensional Jacobian of $X_t$ and the finite-dimensional Jacobian of $X(t)$.
\end{rem}
\begin{pro}
\lb{pro90-}
Assume (A1). Let 
$X_t$ be a continuous version of the solution to  \rf{sde1}.
Then, for all $q\gt 2$, there exists a unique solution $Y_t$ to \rf{eq-y} in $\mc S_q([0,T],  \mc L(\mc  E_p))$.
In particular, the stochastic integral in \rf{eq-y} is  well-defined as an element of $\mc L(\mc E_p,\mc H)$.
Moreover, the solution to \rf{eq-y} takes the form $Y_t = I + \td Y_t$, where $\td Y_t^*$ is in
 $\mc S_q([0,T],\gm (\mc H, \mc E_p^*))$. 
\end{pro}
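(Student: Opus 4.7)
The plan is to construct $Y_t$ as $I + \td Y_t$ via a Picard iteration carried out in the space of processes $\td Y$ whose adjoints belong to $\mc S_q([0,T], \gm(\mc H, \mc E_p^{*}))$. This adjoint-based setting is forced on us: since $\mc E_p$ is not of martingale-type 2 while $\mc E_p^{*}$ is 2-smooth, the stochastic integral in \rf{eq-y} can only be built through the construction of Subsection \ref{st-int3}, and the identity $I$ itself is not $\gm$-radonifying, so only the perturbation $\td Y_t$ can carry the required regularity.

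The structural observation that makes everything work is the factorization
\aa{
\pl_x \hat\sg_k(s,x) = J_s\circ \pl_x \sg_k(s,x), \quad J_s: \Rnu^n\to\mc H, \quad y\mto \vec{\ind_{[s,T]}y}{y},
}
and likewise for $\pl_x \hat b$. Thus $\pl_x\hat\sg_k(s,x)\in \mc L(\mc E_p,\mc H)$ has rank at most $n$, uniformly bounded in $(s,x)$ by (A1) and $\mu[0,T]<\infty$. For any $U\in \mc L(\mc E_p)$, the composition $\pl_x\hat\sg_k(s,X_s)U$ still factors through $\Rnu^n$, and so does its adjoint. Since the $\gm$-radonifying operators form a two-sided operator ideal and every bounded map from a Hilbert space into a finite-dimensional space is $\gm$-radonifying, I would obtain the pointwise estimate
\aa{
\|(\pl_x\hat\sg_k(s,X_s)U)^{*}\|_{\gm(\mc H,\mc E_p^{*})} \lt C \|U\|_{\mc L(\mc E_p)},
}
with $C$ deterministic and independent of $(s,\om)$; the analogous bound applies to $\pl_x\hat b(s,X_s)U$.

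With these bounds I would iterate: set $Y_t^{(0)} = I$ and
\aa{
Y_t^{(n+1)} = I + \int_0^t \pl_x\hat b(s,X_s) Y_s^{(n)}\, ds + \int_0^t \pl_x\hat\sg(s,X_s) Y_s^{(n)}\, dB_s,
}
where the Bochner integral lies in $\mc L(\mc E_p,\mc H)$ and the stochastic integral is defined through \rf{LEpH}. Writing $\td Y_t^{(n)} = Y_t^{(n)} - I$ and applying \rf{f7777} in the 2-smooth Banach space $\gm(\mc H, \mc E_p^{*})$ (together with H\"older), the pointwise estimate above yields, for each $q>2$,
\aa{
\E \sup_{s\in[0,t]}\|(\td Y_s^{(n+1)})^{*}\|_{\gm(\mc H,\mc E_p^{*})}^q \lt C_q \int_0^t \E\bigl(1+\|(\td Y_s^{(n)})^{*}\|_{\gm(\mc H,\mc E_p^{*})}^q\bigr) ds.
}
A Gronwall argument gives uniform bounds on the iterates, and the same estimate applied to successive differences makes the sequence Cauchy in $\mc S_q([0,T],\gm(\mc H,\mc E_p^{*}))$ after adjoints, producing a unique limit $\td Y_t$ with $\td Y_t^{*}\in \mc S_q([0,T],\gm(\mc H,\mc E_p^{*}))$. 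The contractive embedding $\gm(\mc H,\mc E_p^{*})\hookrightarrow \mc L(\mc H,\mc E_p^{*})$ together with \rf{norms} then places $\td Y_t$, and hence $Y_t$, in $\mc S_q([0,T],\mc L(\mc E_p))$; uniqueness in this space follows from the same contraction applied to the difference of two solutions.

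The main obstacle is really the very setup: the stochastic integral in \rf{eq-y} does not make sense as an $\mc L(\mc E_p)$-valued It\^o integral, because $\mc L(\mc E_p)$ is not a martingale-type 2 space and the integrand $\pl_x\hat\sg\, Y$ is not a priori $\gm$-radonifying. Everything rests on the rank-$n$ factorization through $\Rnu^n$, which forces the integrand to be finite-rank and automatically supplies the $\gm$-radonifying bounds needed for Proposition \ref{pro8999} to apply.
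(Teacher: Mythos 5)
Your proposal is correct and takes essentially the same route as the paper: your rank-$n$ factorization through $\Rnu^n$ is precisely the content of Lemma \ref{lem18} and Corollary \ref{lem78} (the operator $S(t)$ there is your $J_s^*$), and your Picard iteration on the adjoint side in $\mc S_q([0,T],\gm(\mc H,\mc E_p^*))$ using \rf{f7777} is exactly the ``standard arguments'' the paper applies to the adjoint equation \rf{tdy} before adjoining back to obtain \rf{eq-y} in $\mc L(\mc E_p)$ and deducing uniqueness.
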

To prove Proposition \ref{pro90-} and following below Proposition \ref{pro4}, we need Lemmas \ref{lem18}, \ref{lem79}, and Corollary \ref{lem78}.
\begin{lem}
\lb{lem18}
Let $p\in (1,2]$, $A\in \mc L(\mc E_p,\Rnu^n)$, $t\in (0,T]$. Define $\hat A = \vec{\ind_{[t,T]}(\fdot) A}{A}$. Then,
$\hat A\in \mc L(\mc E_p,\mc H)$ and $\hat A^* \in \gm(\mc H,\mc E_p^*)$. 
Moreover,
\aaa{
\lb{A-b}
 \|\hat A^*\|_{\gm(\mc H,\mc E_p^*)} \lt  \sqrt{n(\mu[0,T] + 1)}\, \|A\|_{\mc L(\mc E_p,\Rnu^n)}. 
}
\end{lem}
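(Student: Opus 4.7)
\medskip

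\textbf{Proof proposal.} The strategy is to exploit the fact that $\hat A$ factors through the finite-dimensional space $\Rnu^n$, so that $\hat A^*$ is essentially a Hilbert--Schmidt operator, whose $\gamma$-radonifying norm can then be read off directly.

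First I would verify that $\hat A\in\mc L(\mc E_p,\mc H)$. For $x\in\mc E_p$, the vector $\hat A x = (\ind_{[t,T]}(\fdot)Ax,\,Ax)$ has squared $\mc H$-norm equal to $|Ax|^2\mu[t,T] + |Ax|^2 = |Ax|^2(\mu[t,T]+1)$, which is bounded by $\|A\|_{\mc L(\mc E_p,\Rnu^n)}^2(\mu[0,T]+1)\|x\|_{\mc E_p}^2$. This also already forces $\hat A$ to be a finite-rank operator (its range sits in the $2n$-dimensional subspace $\{\ind_{[t,T]}u\}_{u\in\Rnu^n}\oplus\Rnu^n$), which is the reason the claim on $\hat A^*$ will come cheaply.

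Next I would factorize $\hat A = B\circ A$, where $B:\Rnu^n\to\mc H$ is defined by $Bu = (\ind_{[t,T]}(\fdot)u,\,u)$. Dualizing gives $\hat A^* = A^*\circ B^*$ with $A^*\in\mc L(\Rnu^n,\mc E_p^*)$ and $B^*\in\mc L(\mc H,\Rnu^n)$; an easy pairing computation yields $B^*(f,v) = \int_t^T f(s)\,\mu(ds) + v$. Since both $\mc H$ and $\Rnu^n$ are Hilbert spaces, $\gamma(\mc H,\Rnu^n)$ coincides isometrically with the space of Hilbert--Schmidt operators, and picking the standard basis $\{e_i\}_{i=1}^n$ of $\Rnu^n$ I compute
\[
\|B^*\|_{\gamma(\mc H,\Rnu^n)}^2 = \|B\|_{HS}^2 = \sum_{i=1}^n \|Be_i\|_{\mc H}^2 = n(\mu[t,T]+1) \lt n(\mu[0,T]+1).
\]

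Finally, I would apply the standard ideal property of $\gamma$-radonifying operators: for a Hilbert space $F$ and Banach spaces $E, E'$, composition with a bounded linear operator $U:E\to E'$ on the left preserves $\gamma$-radonification with $\|U T\|_{\gamma(F,E')}\lt\|U\|\,\|T\|_{\gamma(F,E)}$. Taking $T=B^*$ and $U=A^*$ gives $\hat A^*\in\gamma(\mc H,\mc E_p^*)$ with
\[
\|\hat A^*\|_{\gamma(\mc H,\mc E_p^*)} \lt \|A^*\|_{\mc L(\Rnu^n,\mc E_p^*)}\,\|B^*\|_{\gamma(\mc H,\Rnu^n)} \lt \|A\|_{\mc L(\mc E_p,\Rnu^n)}\,\sqrt{n(\mu[0,T]+1)},
\]
which is precisely \rf{A-b}. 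There is no real obstacle: the entire argument reduces to choosing the right factorization so that the ``genuinely infinite-dimensional'' factor is the Hilbert--Hilbert operator $B^*$, whose $\gamma$-norm is a tractable Hilbert--Schmidt sum, and the Banach-space factor $A^*$ only contributes its operator norm via the ideal property.
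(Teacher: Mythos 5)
Your proof is correct and is essentially the paper's own argument: the operator you call $B^*$ is exactly the map $S(t):\mc H\to\Rnu^n$, $h\mto\int_t^T h_1(s)\,\mu(ds)+h_2$, used in the paper, whose Hilbert--Schmidt norm squared is $n(\mu[t,T]+1)$, and the conclusion follows in both cases from the ideal property of $\gm$-radonifying operators applied to $\hat A^*=A^*S(t)$.
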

\begin{proof}
Take $h=\vec{h_1}{h_2} \in \mc H$. The explicit computation of the adjoint 
operator $\hat A^* \in \mc L(\mc H,\mc E_p^*)$ implies that 
\aaa{
 \lb{ad67}
\hat A^* h =  \td A^*h_1 + A^* h_2 = A^*\Big(\int_t^T h_1(s)\mu(ds) + h_2\Big),
}
where $\td A = \ind_{[t,T]}(\fdot) A \in \mc L(\mc E_p, H)$.
Note that the operator
\aaa{
\lb{op12}
S(t): \; \mc H \to \Rnu^n, \quad h \mto \int_t^T h_1(s)\mu(ds) + h_2
}
 is in $\mc L(\mc H,\Rnu^n)\simeq  (\mc H^*)^n \simeq \mc L_2(\mc H,\Rnu^n) =  \gm(\mc H,\Rnu^n)$,
 where $\mc L_2(\mc H,\Rnu^n)$ is the space of Hilbert-Schmidt operators $\mc H\to \Rnu^n$.
  Hence,
  by the ideal property of $\gm$-radonifying operators (see \cite{VN}), $\hat A^* \in \gm(\mc H,\mc E_p^*)$. From the definition of the norm
  in $\mc L_2(\mc H,\Rnu^n)$, it is straightforward to obtain that
\aaa{
\lb{t-norm}
\|S(t)\|_{\mc L_2(\mc H,\Rnu^n)}^2 = n(\mu[t,T] + 1). 
}
This implies \rf{A-b}.
\end{proof}
\begin{cor}
\lb{lem78}
Let the maps $\sg_i(t,\fdot):\mc E_p \to \Rnu^n$ be Fr\'echet differentiable 
for each $i=1,\ldots, d$ and $t\in [0,T]$. Then, $\pl_x\hat \sg_i(t,x)^* \in \gm(\mc H, \mc E_p^*)$ for each  $(t,x) \in [0,T]\x \mc E_p$.
Furthermore, inequality \rf{A-b} holds with $A=\pl_x\sg_i(t,x)$. 
If, moreover, the second Fr\'echet derivative $\pl^2_x\sg_i(t,\fdot)$ exists, then for all $i,j = 1,\ldots, d$ and $(t,x) \in [0,T]\x \mc E_p$,
$(\pl^2_x\hat \sg_i(t,x) \hat\sg_j(t,x))^*  \in \gm(\mc H, \mc E_p^*)$.
 \end{cor}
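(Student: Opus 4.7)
The plan is to derive this corollary directly from Lemma \ref{lem18}, by exploiting the fact that the lift operation $V \mapsto \hat V$ defined in \rf{ep-lift} is linear in the $\Rnu^n$-valued map $V$ and the factor $\ind_{[t,T]}(\fdot)$ depends only on $t$, not on the state variable $x$. Consequently, Fr\'echet differentiation in $x$ will commute with the lift, allowing every derivative in $x$ of $\hat\sg_i$ to be written as the lift (in the sense of \rf{ep-lift}) of the corresponding derivative of $\sg_i$, which takes values in $\Rnu^n$.

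For the first assertion, I would observe that the commutation just described yields
\ee{
\pl_x \hat\sg_i(t,x)\, h = \vec{\ind_{[t,T]}(\fdot)\, \pl_x \sg_i(t,x)\, h}{\pl_x \sg_i(t,x)\, h}, \qquad h \in \mc E_p.
}
Thus $\pl_x \hat\sg_i(t,x)$ is precisely of the form $\hat A$ from Lemma \ref{lem18} with $A = \pl_x \sg_i(t,x) \in \mc L(\mc E_p, \Rnu^n)$ (which is in this class thanks to the Fr\'echet differentiability hypothesis). Applying Lemma \ref{lem18} immediately gives $\pl_x\hat\sg_i(t,x)^* \in \gm(\mc H, \mc E_p^*)$, together with the bound \rf{A-b} specialized to $A = \pl_x\sg_i(t,x)$.

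For the second assertion, I would repeat the same commutation argument for the bilinear map $\pl^2_x \sg_i(t,x)$. This gives, for every $h \in \mc E_p$,
\ee{
\pl^2_x \hat\sg_i(t,x)(\hat\sg_j(t,x),\, h) = \vec{\ind_{[t,T]}(\fdot)\, \pl^2_x \sg_i(t,x)(\hat\sg_j(t,x),\, h)}{\pl^2_x \sg_i(t,x)(\hat\sg_j(t,x),\, h)}.
}
Since $\hat\sg_j(t,x) \in \mc E_p$ is a fixed element, freezing it in the first slot produces a linear operator $A \colon h \mapsto \pl^2_x \sg_i(t,x)(\hat\sg_j(t,x),\, h)$ belonging to $\mc L(\mc E_p, \Rnu^n)$. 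Hence $\pl^2_x \hat\sg_i(t,x)\, \hat\sg_j(t,x)$ is again of the form $\hat A$ treated by Lemma \ref{lem18}, and a second invocation of the lemma delivers $(\pl^2_x \hat\sg_i(t,x)\, \hat\sg_j(t,x))^* \in \gm(\mc H, \mc E_p^*)$.

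I do not expect a genuine obstacle in this corollary: it is essentially a bookkeeping step on top of Lemma \ref{lem18}. The only substantive point to verify is the interchange between the lift and Fr\'echet differentiation in $x$, which is immediate once one notes that $\ind_{[t,T]}(\fdot)$ is a function of $t$ alone and that the map $v \mapsto \bigl(\ind_{[t,T]}(\fdot)\,v,\, v\bigr)$ from $\Rnu^n$ into $\mc H$ is a bounded linear operator independent of $x$.
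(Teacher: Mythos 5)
Your proposal is correct and follows essentially the same route as the paper: observe that $\pl_x\hat\sg_i(t,x)$ and $\pl^2_x\hat\sg_i(t,x)\hat\sg_j(t,x)$ are lifts of the form $\hat A$ with $A=\pl_x\sg_i(t,x)$ and $A=\pl^2_x\sg_i(t,x)\hat\sg_j(t,x)$ in $\mc L(\mc E_p,\Rnu^n)$, then apply Lemma \ref{lem18}. The only difference is that you spell out the commutation of the lift with Fr\'echet differentiation, which the paper leaves implicit.
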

\begin{proof}
Note that $\pl_x\sg_i(t,x) \in \mc L(\mc E_p,\Rnu^n)$. By Lemma \ref{lem18}, $\pl_x\hat \sg_i(t,x)^* \in \gm(\mc H, \mc E_p^*)$
and inequality \rf{A-b}  holds with $A=\pl_x \sg_i(t,x)$, $\hat A = \pl_x\hat \sg_i(t,x)$. Next, we note that 
 $\pl^2_x\sg_i(t,x)\hat \sg_j(t,x) \in \mc L(\mc E_p,\Rnu^n)$. By Lemma \ref{lem18}, $(\pl^2_x\hat \sg_i(t,x)\hat  \sg_j(t,x))^* 
 \in \gm(\mc H, \mc E_p^*)$.
 \end{proof}
\begin{lem}
\lb{lem79}
Under the assumptions of Corollary \ref{lem78},
for each $k=1,\ldots, d$,  $(t,x) \in [0,T]\x \mc E_p$, $\pl_x \hat \sg_k(t,x)^* \in \mc L(\mc E_q^*, \mc E_p^*)$ for all $q>1$.
Furthermore, the norm of $\pl_x \hat \sg_k(t,x)^*$  is  bounded uniformly in $(t,x)$.
\end{lem}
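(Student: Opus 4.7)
The plan is to repeat the rank-one structure argument from Lemma \ref{lem18}, but viewing $\hat A := \pl_x \hat\sg_k(t,x)$ as an operator with values in $\mc E_q$ for an arbitrary $q>1$, rather than just in $\mc H = \mc E_2$. Write $A = \pl_x \sg_k(t,x) \in \mc L(\mc E_p,\Rnu^n)$, so that for $y\in \mc E_p$,
\aa{
\hat A y = \vec{\ind_{[t,T]}(\fdot)\,(Ay)}{Ay}.
}
The crucial observation is that the first coordinate is a \emph{constant} $\Rnu^n$-vector multiplied by the indicator $\ind_{[t,T]}$, so its $E_q$-norm is simply $|Ay|\,\mu[t,T]^{1/q}$ and is finite for every $q\gt 1$ (in fact every $q\gt 0$).

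From this I would compute
\aa{
\|\hat A y\|_{\mc E_q}^2 = |Ay|^2\,\mu[t,T]^{2/q} + |Ay|^2 \lt \bigl(\mu[0,T]^{2/q}+1\bigr)\,|Ay|^2 \lt \bigl(\mu[0,T]^{2/q}+1\bigr)\,\|A\|_{\mc L(\mc E_p,\Rnu^n)}^2\,\|y\|_{\mc E_p}^2,
}
which gives $\hat A \in \mc L(\mc E_p,\mc E_q)$ with $\|\hat A\|_{\mc L(\mc E_p,\mc E_q)} \lt \sqrt{\mu[0,T]^{2/q}+1}\ \|A\|_{\mc L(\mc E_p,\Rnu^n)}$. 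The standard duality identity $\|\hat A^*\|_{\mc L(\mc E_q^*,\mc E_p^*)} = \|\hat A\|_{\mc L(\mc E_p,\mc E_q)}$ then places the adjoint in $\mc L(\mc E_q^*,\mc E_p^*)$ with the same bound. To be definite, this adjoint is compatible with the one from Lemma \ref{lem18}: since the range of $\hat A$ lies in the rank-one subspace $\spn\{\ind_{[t,T]}\}\ox \Rnu^n \,\oplus\, \Rnu^n$, the functional formula
\aa{
\hat A^* h = A^*\Big(\int_t^T h_1(s)\,\mu(ds) + h_2\Big),\qquad h = \vec{h_1}{h_2},
}
from \rf{ad67} remains well-defined on $h\in \mc E_q^* = \mc E_{q'}$ (with $\frac1q+\frac1{q'}=1$) because the pairing $\int_t^T h_1(s)\,\mu(ds)$ amounts to pairing $h_1 \in E_{q'}$ with $\ind_{[t,T]}\in E_q$, which is $\lt \mu[t,T]^{1/q}\|h_1\|_{E_{q'}}$ by H\"older.

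The uniform control in $(t,x)$ then follows immediately from assumption (A1): the first-order Fr\'echet derivative $\pl_x\sg_k(t,x)$ is bounded uniformly in $(t,x)\in[0,T]\x \mc E_p$, so $\|A\|_{\mc L(\mc E_p,\Rnu^n)}\lt C$ for a constant $C$ independent of $(t,x)$. Combined with the norm inequality above and the fact that $\mu[0,T]$ is finite, this yields
\aa{
\sup_{(t,x)\in[0,T]\x \mc E_p}\|\pl_x\hat\sg_k(t,x)^*\|_{\mc L(\mc E_q^*,\mc E_p^*)} \lt C\sqrt{\mu[0,T]^{2/q}+1}.
}
There is no real obstacle here; the only point to verify is the compatibility of the $\mc E_q^*$-adjoint with the $\mc H$-adjoint already constructed in Lemma \ref{lem18}, and this is automatic from the rank-one structure of $\hat A$ in its first coordinate.
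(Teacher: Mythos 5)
Your proof is correct and takes essentially the same route as the paper: the paper writes the adjoint as $\pl_x\hat\sg_k(t,x)^* = \pl_x\sg_k(t,x)^*\, S(t)$, regarding $S(t)$ from \rf{op12} as an operator $\mc E_q^*\to\Rnu^n$ of norm at most $\mu[0,T]^{1/q}+1$, which is just the dual formulation of your primal estimate of $\|\pl_x\hat\sg_k(t,x)\|_{\mc L(\mc E_p,\mc E_q)}$ followed by equality of operator and adjoint norms. In both arguments the uniform bound in $(t,x)$ comes from the uniform boundedness of $\pl_x\sg_k$ guaranteed by (A1).
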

\begin{proof}
Note that we can also regard $\hat\sg_k$ as a map $[0,T]\x \mc E_p \to \mc E_q$ for all $q>1$.
Further note that the adjoint operator $\pl_x \hat \sg_k(t,x)^*  \in \mc L(\mc E_q^*, \mc E_p^*)$ takes the form \rf{ad67} with $A^*=\pl_x\sg(t,x)^*$; however,
$S(t)$ in \rf{op12} is understood as an operator $\mc E_q^* \to \Rnu^n$, 
and the explicit computation shows that its norm is smaller than $\mu[0,T]^\frac1{q}+1$. This implies
that
$\|\pl_x \hat \sg_k(t,x)^*\|_{\mc L(\mc E_q^*, \mc E_p^*)} \lt \big(\mu[0,T]^\frac1{q}+1\big) \|\pl_x\sg_k(t,x)\|_{\mc L(\mc E_p,\Rnu^n)}$.
\end{proof}
\begin{proof}[Proof of Proposition \ref{pro90-}]
Consider the SDE for the adjoint operator $Y_t^*$ 
\aaa{
\lb{sde-z-p2} 
Y_t^* =I + \int_0^t Y_s^* \pl_x \hat b(s,X_s)^*  \, ds 
+\sum_{k=1}^d \int_0^t  Y_s^* \pl_x \hat \sg_k(s,X_s)^*  d B^k_s, \quad t\in [0,T].
}
We prove that \rf{sde-z-p2} possesses a unique solution in $\mc L(\mc E_p^*)$.
Since, by Corollary \ref{lem78},  $\pl_x \hat \sg_k(s,X_s)^*$  takes values in $\gm(\mc H, \mc E_p^*)$,
the stochastic integral in \rf{sde-z-p2} and the equation itself are well-defined.
Next, \rf{sde-z-p2} can be written with respect to $\td Y_t^* = Y_t^* - I$ as follows:
\mmm{
\lb{tdy}
\td Y_t^* =   \int_0^t  \td Y_s^* \pl_x \hat b(s,X_s)^*  \, ds  +\sum_{k=1}^d \int_0^t  \td Y_s^* \pl_x \hat \sg_k(s,X_s)^*   d B^k_s\\
- \int_0^t \pl_x \hat b(s,X_s)^*  \, ds - \sum_{k=1}^d \int_0^t   \pl_x \hat \sg_k(s,X_s)^*  d B^k_s.
}
Let us prove that \rf{tdy} has a unique solution in $\gm(\mc H,\mc E_p^*)$.    
Since $\gm(\mc H, \mc E_p^*)\sub \mc L_2(\mc H)$,
in \rf{tdy}, we regard $\pl_x \hat \sg_k(s,X_s)^*$  and  $\pl_x \hat b(s,X_s)^*$ in the first two terms 
as elements of $\mc L_2(\mc H)$, and in the last two terms as elements of  $\gm(\mc H, \mc E_p^*)$. 
By inequality \rf{f7777} and
standard arguments, we obtain the existence
and uniqueness of solution $\td Y_s^*$ to \rf{tdy} in the space $\mc S_q([0,T], \gm(\mc H,\mc E_p^*))$
 for all $q\gt 2$. Hence, $\td Y_t$ a.s. takes values in $\mc L(\mc E_p, \mc H)$; therefore, $Y_t = I + \td Y_t$ a.s.
takes values in $\mc L(\mc E_p)$.
We have
\aa{
\E\sup_{s\in[0,T]}\|\td Y_s\|^q_{\mc L(\mc E_p,\mc H)} = 
\E\sup_{s\in[0,T]}\|\td Y^*_s\|^q_{\mc L(\mc H,\mc E_p^*)}  
\lt \E\sup_{s\in[0,T]}\|\td Y^*_s\|^q_{\gm(\mc H,\mc E_p^*)}  
<\infty,
}
and hence, $\E\sup_{s\in [0,T]}\|Y_s\|^q_{\mc L(\mc E_p)}<\infty$.

Let us show that $Y_t$ verifies \rf{eq-y} in $\mc L(\mc E_p)$. 
Note that the stochastic integral in \rf{eq-y} 
is well-defined as an element of
$\mc L(\mc E_p,\mc H)$ in the sense of paragraph \ref{st-int3}.  
Indeed,  $Y_s^*\pl_x \hat \sg_k(s,X_s)^*$  a.s. takes values in $\gm(\mc H, \mc E_p^*)$,
and, moreover, it is adapted and continuous since all functions involved in the this expression are continuous.
Indeed, $Y^*_s$ and $X_s$ are continuous a.s.; furthermore, the indicator function $\ind_{[s,T]}$
is continuous by Lemma \ref{lem12} in Subsection \ref{ind-f}.
Thus,  $\int_0^t \pl_x \hat \sg_k(s,X_s) Y_s  d B^k_s$ is understood as  
$\big(\int_0^t Y_s^*\pl_x \hat \sg_k(s,X_s)^* d B^k_s\big)^*$.
Therefore, adjoining \rf{sde-z-p2}, we obtain \rf{eq-y} in $\mc L(\mc E_p)$. 
The solution to \rf{eq-y} will be unique since the solution to \rf{tdy} is unique.
Remark that all integrals in equation \rf{eq-y}
are $\mc L(\mc E_p,\mc H)$-valued, and the equation is $\mc L(\mc E_p)$-valued only due to the presence of the identity operator.
\end{proof}
\begin{pro}
\lb{pro90}
Under the assumptions of Proposition \ref{pro90-}, for each $q\gt 2$, there exists a unique solution $Y_t$ to \rf{eq-y} 
in $\mc S_q([0,T], \mc L(\mc H))$. Furthermore, it holds that $Y_t = I + \td Y_t$, 
where $\td Y_t$ is in $\mc S_q([0,T], \mc L_2(\mc H))$.  Moreover,
the solution to \rf{eq-y} in $\mc S_q([0,T],  \mc L(\mc  H))$ is the restriction to $\mc H$ (as a linear bounded operator) 
of the solution to the same equation in $\mc S_q([0,T],  \mc L(\mc  E_p))$.
\end{pro}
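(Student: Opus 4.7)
The plan is to rerun the Picard-iteration scheme of Proposition \ref{pro90-} with the Hilbert space $\mc H$ in place of the Banach space $\mc E_p$, and then to identify the two solutions using uniqueness. The advantage of working in $\mc H$ is that $\gm(\mc H,\mc H)=\mc L_2(\mc H)$ is itself a Hilbert (hence 2-smooth) space, so the stochastic-analysis estimates from the proof of Proposition \ref{pro90-} carry over directly.

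First I would rewrite \rf{eq-y} for $\td Y_t = Y_t - I$ in the form \rf{tdy}, now regarding each integrand as $\mc L_2(\mc H)$-valued. The key point is that for every $(s,\om)$ the operators $\pl_x\hat b(s,X_s)$ and $\pl_x\hat\sg_k(s,X_s)$, seen as maps $\mc H\to\mc H$, factor through $\Rnu^n$ and therefore have rank at most $n$; by Lemma \ref{lem18} their Hilbert--Schmidt norms are controlled by $\sqrt{n(\mu[0,T]+1)}$ times $\|\pl_x b(s,X_s)\|_{\mc L(\mc E_p,\Rnu^n)}$ (respectively its $\sg_k$-analogue), both of which are uniformly bounded by (A1). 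The ideal property of Hilbert--Schmidt operators then guarantees that each integrand in the equation for $\td Y_t$ lies in $\mc L_2(\mc H)$ whenever $\td Y_t$ does. Applying the maximal estimate \rf{f7777} in the 2-smooth space $\mc L_2(\mc H)$, a standard Banach fixed-point argument on $\mc S_q([0,T],\mc L_2(\mc H))$ yields, for each $q\gt 2$, a unique solution $\td Y_t$ in that space, and hence $Y_t = I + \td Y_t \in \mc S_q([0,T],\mc L(\mc H))$ with the stated decomposition.

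It remains to identify this solution with the restriction of the $\mc L(\mc E_p)$-solution from Proposition \ref{pro90-}. Let $Y_t^{(p)} = I + \td Y_t^{(p)}$ be that solution and let $j\colon \mc H\hookrightarrow\mc E_p$ denote the continuous inclusion; set $R_t = Y_t^{(p)}\circ j$. Its non-identity part $\td Y_t^{(p)}\circ j$ lies in $\mc L_2(\mc H)$: its adjoint factors as $j^*\circ (\td Y_t^{(p)})^*$, a composition of the $\gm$-radonifying operator $(\td Y_t^{(p)})^*\in\gm(\mc H,\mc E_p^*)$ with the bounded operator $j^*\colon \mc E_p^*\to\mc H^*\cong\mc H$, and the ideal property together with $\gm(\mc H,\mc H)=\mc L_2(\mc H)$ yields the claim. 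A direct verification, approximating the $\mc E_p$-valued stochastic integrals by simple processes for which composition with $j$ is transparent, shows that $R_t$ satisfies \rf{eq-y} in $\mc L(\mc H)$. The uniqueness established in the previous paragraph then forces $R_t = Y_t$. The only real obstacle is precisely this compatibility of stochastic integrals under restriction; it is handled by the simple-process approximation just mentioned, exactly as in the proof of Proposition \ref{pro90-}.
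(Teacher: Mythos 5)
Your argument is correct, and it uses the same basic ingredients as the paper, but organized differently. The paper obtains existence in $\mc S_q([0,T],\mc L(\mc H))$ essentially for free: it restricts the $\mc L(\mc E_p)$-solution of Proposition \ref{pro90-} to $\mc H$, observes that $\td Y_t^*\in\gm(\mc H,\mc E_p^*)\sub\mc L_2(\mc H)$ forces the restriction to have the form $I+\td Y_t$ with $\td Y_t\in\mc L_2(\mc H)$, and then proves uniqueness by noting that the difference of two $\mc L(\mc H)$-solutions satisfies a homogeneous linear equation in the Hilbert space $\mc L_2(\mc H)$ with bounded coefficients and zero initial condition. You instead re-establish well-posedness intrinsically: you rerun the fixed-point scheme in $\mc L_2(\mc H)$ (legitimate, since the lifted derivatives $\pl_x\hat b$, $\pl_x\hat\sg_k$ restricted to $\mc H$ are finite-rank with Hilbert--Schmidt norms controlled as in Lemma \ref{lem18}, and \rf{f7777} applies because $\mc L_2(\mc H)$ is 2-smooth), and only then identify the restriction with this solution. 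This duplicates the existence part, but it buys you something the paper leaves implicit: the verification that the restriction actually satisfies \rf{eq-y} in $\mc L(\mc H)$, i.e.\ that composing with the inclusion $j\colon\mc H\hookrightarrow\mc E_p$ commutes with the $\mc L(\mc E_p,\mc H)$-valued stochastic integrals. Your treatment of that step is sound (composition with the fixed bounded operator $j^*$ commutes with the integrals defined via adjoints in $\gm(\mc H,\mc E_p^*)$, which the simple-process approximation makes explicit), and your use of the ideal property to place $\td Y^{(p)}_t\circ j$ in $\mc L_2(\mc H)$ coincides with the paper's observation. Your uniqueness via the contraction argument and the paper's uniqueness via the homogeneous linear equation are equivalent. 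So: correct, somewhat more labor than needed for existence, but more explicit on the restriction-compatibility step.
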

\begin{proof}
Let $Y_{_{\displaystyle\fdot}}\!\in \mc S_q([0,T],  \mc L(\mc  E_p))$ be the solution to \rf{eq-y} whose existence is proved in Proposition \ref{pro90-}.
Since $Y_t^* = I + \td Y_t^*$, where $\td Y^*_t\in \gm(\mc H, \mc E^*_p) \sub \mc L_2(\mc H)$, 
 the restriction $Y_t$  to $\mc H$ takes the form $Y_t = I + \td Y_t$, where $\td Y_t$ a.s.
takes values in $\mc L_2(\mc H)$. 

To prove the uniqueness, note that if $Y^1_t$ and $Y^2_t$ are two solutions to \rf{eq-y} in $\mc S_q([0,T],  \mc L(\mc  H))$,
then $Y^1_t - Y^2_t$ solves a linear equation in the Hilbert space $\mc L_2(\mc H)$ with bounded coefficients and zero initial 
condition, which implies that $Y^1_t = Y^2_t$ a.s.
\end{proof}

Equation \rf{eq-y} allows to write the SDE for the inverse operator $Z_t$ of $Y_t$
 \aaa{
 \lb{inverse}
Z_t = I - \int_0^t Z_s\,  \hat \sig(s,X_s)  ds - \int_0^t   Z_s \,\pl_x \hat \sg(s,X_s)dB_s,
\quad t\in [0,T],
}
where 
\begingroup
\setlength{\belowdisplayskip}{0pt}
\setlength{\abovedisplayskip}{0pt} 
\aaa{
\lb{hsig}
\hat \sig =    \pl_x \hat b - \sum_{k=1}^d  \pl_x \hat \sg_k  \pl_x \hat \sg_k.
}
\endgroup
\begin{pro}
\lb{pro4}
Assume (A1). Let 
$X_t$ be a continuous version of the solution to \rf{sde1}.
Then, for all $q\gt 2$, there exists a unique solution $Z_t$ to \rf{inverse} in $\mc S_q([0,T],  \mc L(\mc  E_p))$.
In particular, the stochastic integral in \rf{inverse}
is well-defined as an element of
$\mc L(\mc E_p,\mc H)$. 
Moreover, $Z_t = I + \td Z_t$ where $\td Z_t^*$ is in
 $\mc S_q([0,T],\gm (\mc H, \mc E_p^*))$.
\end{pro}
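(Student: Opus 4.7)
The plan is to mirror the proof of Proposition \ref{pro90-}, since \rf{inverse} has the same structural features as \rf{eq-y}, only with $Z_s$ multiplying the coefficients from the left rather than the right. Taking adjoints in \rf{inverse} yields
\begin{equation*}
Z_t^* = I - \int_0^t \hat\sig(s,X_s)^* Z_s^* \, ds - \sum_{k=1}^d \int_0^t \pl_x \hat\sg_k(s,X_s)^* Z_s^* \, dB^k_s,
\end{equation*}
which is an $\mc L(\mc E_p^*)$-valued equation in the same form as \rf{sde-z-p2}. Setting $\td Z_t^* = Z_t^* - I$, one gets a linear inhomogeneous SDE for $\td Z_t^*$ whose source terms $-\int_0^t \hat\sig(s,X_s)^* ds$ and $-\int_0^t \pl_x\hat\sg_k(s,X_s)^* dB^k_s$ already take values in $\gm(\mc H, \mc E_p^*)$ thanks to Corollary \ref{lem78} (together with the observation that $\hat\sig^*$ is a finite sum of operators to which the corollary applies, via \rf{hsig}).

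The crucial step is to show that the linear operation $\xi \mapsto \pl_x\hat\sg_k(s,X_s)^* \xi$ preserves $\gm(\mc H, \mc E_p^*)$ with a deterministic bound on its operator norm. I would invoke Lemma \ref{lem79} with $q=p$, which gives that $\pl_x\hat\sg_k(s,X_s)^* \in \mc L(\mc E_p^*, \mc E_p^*)$ with norm bounded uniformly in $(s,\omega)$; the ideal property of $\gm$-radonifying operators then implies $\pl_x\hat\sg_k(s,X_s)^* \td Z_s^* \in \gm(\mc H, \mc E_p^*)$ with a controllable $\gm$-norm. The same reasoning handles the zero-order term $\hat\sig(s,X_s)^* \td Z_s^*$.

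With these uniform bounds in hand, a standard Picard iteration in the 2-smooth Banach space $\gm(\mc H, \mc E_p^*)$, powered by the maximal inequality \rf{f7777}, produces a unique solution $\td Z_t^*$ in $\mc S_q([0,T], \gm(\mc H, \mc E_p^*))$ for every $q \gt 2$. Adjoining back, $Z_t = I + \td Z_t$ belongs to $\mc S_q([0,T], \mc L(\mc E_p))$, and since $\td Z_s^* \in \gm(\mc H, \mc E_p^*)$ is equivalent to $\td Z_s \in \mc L(\mc E_p, \mc H)$, the integrand $Z_s \, \pl_x\hat\sg(s,X_s)$ has adjoint in $\gm(\mc H, \mc E_p^*)$, so the stochastic integral in \rf{inverse} is well-defined as an element of $\mc L(\mc E_p, \mc H)$ in the sense of paragraph \ref{st-int3}. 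Uniqueness in $\mc S_q([0,T], \mc L(\mc E_p))$ follows from the linear structure of the adjoint equation via a Grönwall argument.

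The main obstacle, as in Proposition \ref{pro90-}, is the bookkeeping of which space the adjoint multipliers act on: the naive composition $\pl_x\hat\sg_k(s,X_s)^* \td Z_s^*$ is only in $\mc L(\mc H, \mc E_p^*)$, and one has to use Lemma \ref{lem79} to see that $\pl_x\hat\sg_k(s,X_s)^*$ is in fact bounded from $\mc E_p^*$ into itself, so that the ideal property of $\gm$-radonifying operators actually applies and keeps the iteration inside the smaller space $\gm(\mc H, \mc E_p^*)$. Once this point is secured, the remainder is a routine adaptation of the estimates in the proof of Proposition \ref{pro90-}.
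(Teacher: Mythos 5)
Your proposal is correct and follows essentially the same route as the paper: pass to the adjoint equation, set $\td Z_t^* = Z_t^*-I$, solve the resulting linear SDE in the 2-smooth space $\gm(\mc H,\mc E_p^*)$ via \rf{f7777}, using Lemma \ref{lem79} (with $q=p$) together with the ideal property of $\gm$-radonifying operators to keep the iteration inside $\gm(\mc H,\mc E_p^*)$, and then adjoin back to obtain the $\mc L(\mc E_p)$-valued equation, the meaning of the stochastic integral in the sense of paragraph \ref{st-int3}, and uniqueness. The only point left implicit, which the paper states explicitly, is the routine verification of adaptedness and a.s. continuity of the $\gm(\mc H,\mc E_p^*)$-valued integrand, as in the proof of Proposition \ref{pro90-}.
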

\begin{proof}
Consider the SDE for the adjoint operator $Z_t^*$ on $[0,T]$
\aaa{
\lb{sde-z*}
Z_t^* =I - \int_0^t \hat \sig(s,X_s)^* Z_s^* \, ds 
-\sum_{k=1}^d \int_0^t \pl_x \hat \sg_k(s,X_s)^* Z_s^*   d B^k_s.
}
Rewriting it  with respect to $\td Z_t^* = Z_t ^*- I$, we obtain
 \mmm{
 \lb{eq-tdz}
 \td Z_t^* =   - \int_0^t \hat \sig(s,X_s)^* \td Z_s^* \, ds 
-\sum_{k=1}^d \int_0^t \pl_x \hat \sg_k(s,X_s)^* \td Z_s^*  d B^k_s \\
- \int_0^t \hat \sig(s,X_s)^*  ds 
-\sum_{k=1}^d \int_0^t \pl_x \hat \sg_k(s,X_s)^*  d B^k_s.
} 
Consider this equation in $\gm(\mc H,\mc E^*_p)$.  Note that
$\gm(\mc H, \mc E^*_p) \sub \mc L(\mc H, \mc E^*_p) \sub \mc L(\mc E^*_p)$.
Taking this observation into account, we interpret  $\hat \sig(s,X_s)^*$ and
 $\pl_x\hat \sg_{k}(s,X_s)^*$ in the first two terms as elements of  $\mc L(\mc E_p^*)$, and in the last two terms
as elements of $\gm(\mc H,\mc E_p^*)$. By the similar argument as in Proposition \ref{pro90-}, 
we obtain the existence and uniqueness of solution $\td Z_t^*$ to \rf{eq-tdz} in $\mc S_q([0,T], \gm(\mc H,\mc E^*_p))$
such that $\E\sup_{s\in [0,T]}\|Z_s\|^q_{\mc L(\mc E_p)}<\infty$.

Let us show that $Z_t  =  I + \td Z_t$ verifies \rf{inverse} in $\mc L(\mc E_p)$. 
 Since
$Z_t^* = I + \td Z_t^*$, where $\td Z^*_t\in \gm(\mc H, \mc E^*_p) \sub \mc L_2(\mc H)$,
$Z^*_t$ can be regarded as an element of $\mc L(\mc H)$. Furthermore, by Corollary \ref{lem78}, 
$\pl_x \hat \sg_k(s,X_s)^* \in \gm(\mc H,\mc E_p^*)$. This implies that $\pl_x \hat \sg_k(s,X_s)^* Z_s^*$ a.s. takes values in  $\gm(\mc H,\mc E_p^*)$.
Moreover, it is adapted and continuous by the same argument as in the proof of Proposition \ref{pro90-}.
Therefore, the stochastic integral in \rf{inverse} 
 is well-defined  in the sense of paragraph \ref{st-int3}.

Thus, adjoining  \rf{sde-z*}, we arrive at the SDE \rf{inverse} in $\mc L(\mc E_p)$. 
The solution to \rf{inverse} will be unique since the solution to \rf{eq-tdz} is unique.
\end{proof}
\begin{cor}
\lb{cor45}
Under the assumptions of Proposition \ref{pro4},
there exists a unique solution $Z_t$ to \rf{inverse} in $\mc S_q([0,T],  \mc L(\mc  H))$
such that  $Z_t = I + \td Z_t$, where $\td Z_t$ is in $\mc S_q([0,T], \mc L_2(\mc H))$.
Moreover, the solution to \rf{inverse} in $\mc S_q([0,T],  \mc L(\mc  H))$ is the restriction to $\mc H$ (as a linear  bounded operator) 
of the solution to \rf{inverse} in $\mc S_q([0,T],  \mc L(\mc  E_p))$.
\end{cor}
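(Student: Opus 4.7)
The plan is to mimic the argument used for Proposition \ref{pro90}, substituting the SDE \rf{inverse} for \rf{eq-y} and using the structural results already established in Proposition \ref{pro4}. Starting from the unique solution $Z_t \in \mc S_q([0,T],\mc L(\mc E_p))$ produced by Proposition \ref{pro4}, I would use the decomposition $Z_t = I + \td Z_t$ with $\td Z_t^* \in \mc S_q([0,T],\gm(\mc H,\mc E_p^*))$. Since the continuous inclusion $\gm(\mc H,\mc E_p^*) \sub \mc L(\mc H,\mc E_p^*)$ holds and $\mc H$ is identically imbedded into $\mc E_p$, adjoining gives that the restriction of $\td Z_t$ to $\mc H$ takes values in $\mc L(\mc H)$. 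In fact, because $\gm(\mc H,\mc E_p^*) \sub \mc L_2(\mc H)$ (using the norm control from Lemma \ref{lem18} together with $\mc E_p^* \supset \mc H^*$), the restriction lies in $\mc L_2(\mc H)$ a.s., and the moment bound
\ee{
\E\sup_{s\in[0,T]}\|\td Z_s\|^q_{\mc L_2(\mc H)} \lt C\,\E\sup_{s\in[0,T]}\|\td Z^*_s\|^q_{\gm(\mc H,\mc E_p^*)} < \infty
}
transfers from the bound already obtained in Proposition \ref{pro4}. Consequently $Z_t = I + \td Z_t$ sits in $\mc S_q([0,T],\mc L(\mc H))$.

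Next I would verify that this restriction actually solves \rf{inverse} viewed as an $\mc L(\mc H)$-valued equation. This is a direct consequence of the $\mc L(\mc E_p)$-valued identity obtained in Proposition \ref{pro4}: restricting every term to $\mc H$ preserves the equation, and the stochastic integral, originally well-defined as an $\mc L(\mc E_p,\mc H)$-valued object in the sense of paragraph \ref{st-int3}, a fortiori takes values in $\mc L(\mc H)$ when restricted. Continuity of the integrand $Z_s\,\pl_x\hat\sg(s,X_s)$ in the relevant topology, used to legitimize the integral, was already checked in Proposition \ref{pro4} via Corollary \ref{lem78} and the continuity of $s\mto \ind_{[s,T]}$ (Lemma \ref{lem12}).

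For uniqueness in $\mc S_q([0,T],\mc L(\mc H))$, suppose $Z^1_t, Z^2_t$ are two solutions. Their difference $W_t = Z^1_t - Z^2_t$ satisfies a linear SDE with zero initial condition in the Hilbert space $\mc L_2(\mc H)$, with coefficient operators given by composition with the bounded maps $\hat\sig(s,X_s)$ and $\pl_x\hat\sg_k(s,X_s)$ (whose boundedness follows from (A1) together with Lemma \ref{lem18}). Applying the usual Hilbert-space It\^o estimate yields $\E\sup_{s\lt t}\|W_s\|^q_{\mc L_2(\mc H)} \lt C\int_0^t \E\sup_{r\lt s}\|W_r\|^q_{\mc L_2(\mc H)} ds$, and Gr\"onwall forces $W \equiv 0$ a.s. Finally, the fact that the $\mc L(\mc H)$-solution coincides with the restriction of the $\mc L(\mc E_p)$-solution is automatic from the construction: both solve the same equation on $\mc H$, and uniqueness in $\mc S_q([0,T],\mc L(\mc H))$ identifies them. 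The only subtle point — and the one I would be most careful about — is the identification $\gm(\mc H,\mc E_p^*) \sub \mc L_2(\mc H)$ and the associated norm comparison, since this is what allows the Hilbert-space framework to absorb the $\mc E_p$-valued objects; but the same mechanism was already used in Proposition \ref{pro90}, so no new ideas are required.
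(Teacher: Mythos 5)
Your argument is essentially the paper's own: the paper proves this corollary by declaring the proof identical to that of Proposition \ref{pro90}, namely restricting the $\mc L(\mc E_p)$-valued solution from Proposition \ref{pro4} to $\mc H$ using $\td Z_t^*\in \gm(\mc H,\mc E_p^*)\sub \mc L_2(\mc H)$, and obtaining uniqueness because the difference of two solutions satisfies a linear equation in $\mc L_2(\mc H)$ with bounded coefficients and zero initial condition. The only quibble is your parenthetical justification of that inclusion: since $\mc H$ is identically imbedded into $\mc E_p$, the relevant duality inclusion is $\mc E_p^*\subset \mc H^*\simeq \mc H$ (not $\mc E_p^*\supset\mc H^*$), after which the ideal property gives $\gm(\mc H,\mc E_p^*)\sub\mc L_2(\mc H)$; this slip does not affect the argument.
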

\begin{proof}
The proof is the same as of Proposition \ref{pro90}.
\end{proof}

Now we prove that $Z_t$ is, indeed, the inverse operator of $Y_t$ for $t\in [0,T]$.
 It is sufficient for our arguments in Section \ref{s4} to prove this fact 
 only for the case when $Y_t$ and $Z_t$ are elements of $\mc L(\mc H)$. 
 In the latter case, the stochastic integrals are $\mc L_2(\mc H)$-valued
 since $\pl_x\hat\sg_k(t,X_t)$ a.s. takes values in $\mc L_2(\mc H)$. 
 \begin{pro}
 \lb{inv34}
Assume (A1). Let $Y_t$ and $Z_t$ be solutions to \rf{eq-y} and \rf{inverse}, respectively, in $\mc L(\mc H)$.
 Then, $Y_t$ is invertible a.s. as an element of $\mc L(\mc H)$. Moreover, $Z_t$ is its inverse operator.
\end{pro}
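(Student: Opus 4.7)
The plan is to compute $d(Z_t Y_t)$ via the It\^o product formula and show that all terms cancel, forcing $Z_t Y_t = I$ for all $t$; the two-sided invertibility of $Y_t$ will then follow from a Fredholm-alternative argument exploiting the decomposition $Y_t = I + \td Y_t$ with $\td Y_t$ Hilbert--Schmidt.

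The first step is to set up the It\^o calculus in the right space. By Proposition \ref{pro90} and Corollary \ref{cor45}, $Y_t = I + \td Y_t$ and $Z_t = I + \td Z_t$ with $\td Y_t, \td Z_t \in \mc L_2(\mc H)$, so $Z_t Y_t - I = \td Y_t + \td Z_t + \td Z_t \td Y_t$ takes values in $\mc L_2(\mc H)$ (using the ideal property $\mc L_2(\mc H)\cdot\mc L_2(\mc H)\subset \mc L_2(\mc H)$). All stochastic integrands $\pl_x\hat\sg_k(s,X_s)$ land in $\mc L_2(\mc H)$ by Lemma \ref{lem18}, so It\^o's product formula for the bilinear composition on $\mc L_2(\mc H)$ applies in its standard Hilbert-space form.

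The main computation uses \rf{eq-y}, \rf{inverse}, and \rf{hsig}. The product rule yields
\aa{
d(Z_t Y_t)
&= (dZ_t)\,Y_t + Z_t\,(dY_t) + d[Z,Y]_t \\
&= -Z_t\hat\sig(t,X_t)Y_t\,dt - \sum_{k=1}^d Z_t\pl_x\hat\sg_k(t,X_t)Y_t\,dB^k_t \\
&\quad {} + Z_t\pl_x\hat b(t,X_t)Y_t\,dt + \sum_{k=1}^d Z_t\pl_x\hat\sg_k(t,X_t)Y_t\,dB^k_t \\
&\quad {} - \sum_{k=1}^d Z_t\pl_x\hat\sg_k(t,X_t)\pl_x\hat\sg_k(t,X_t)Y_t\,dt,
}
where the quadratic covariation comes from pairing the martingale part $-\sum_k Z_t\pl_x\hat\sg_k\,dB^k_t$ of $dZ_t$ with the martingale part $\sum_k \pl_x\hat\sg_k Y_t\,dB^k_t$ of $dY_t$. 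The stochastic terms cancel pairwise, while substituting \rf{hsig} collapses the drift to
\ee{
Z_t\Big[\pl_x\hat b - \hat\sig - \sum_{k=1}^d \pl_x\hat\sg_k\pl_x\hat\sg_k\Big]Y_t = 0.
}
Combined with $Z_0 Y_0 = I$, this gives $Z_t Y_t = I$ a.s.\ for every $t\in [0,T]$.

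To conclude, I use Fredholm theory. The identity $Z_t Y_t = I$ shows that $Y_t$ has trivial kernel on $\mc H$. Since $\td Y_t \in \mc L_2(\mc H)$ is Hilbert--Schmidt and therefore compact, $Y_t = I + \td Y_t$ is a compact perturbation of the identity; by the Fredholm alternative, injectivity implies invertibility, so $Y_t\in\mc L(\mc H)$ is invertible a.s. Multiplying $Z_t Y_t = I$ on the right by $Y_t^{-1}$ then forces $Z_t = Y_t^{-1}$. The main obstacle is precisely this last step: in infinite dimensions a left inverse need not be a right inverse, and the clean application of Fredholm's alternative is made possible only by the structural decomposition $Y_t = I + \td Y_t$ with $\td Y_t$ Hilbert--Schmidt supplied by Proposition \ref{pro90}.
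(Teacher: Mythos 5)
Your proof is correct, and the first half coincides with the paper's argument: the paper also applies It\^o's formula in $\mc L_2(\mc H)$ (to $\td Z_t\td Y_t$, with $\td Y_t=Y_t-I$, $\td Z_t=Z_t-I$, which is exactly how one justifies your product rule for $d(Z_tY_t)$, since $Z_t,Y_t$ themselves are not Hilbert-space valued), and the same cancellation of the martingale terms together with \rf{hsig} yields $Z_tY_t=I$. Where you genuinely diverge is the second half. The paper does not invoke any operator-theoretic machinery: it runs the It\^o computation a second time for $\td Y_t\td Z_t$, derives a closed linear SDE in $\mc L_2(\mc H)$ for $U_t=Y_tZ_t-I$ with zero initial condition, and concludes $U_t=0$ by uniqueness, so that $Y_tZ_t=I$ as well. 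You instead exploit the structural fact (from Proposition \ref{pro90}) that $\td Y_t\in\mc L_2(\mc H)$ is compact: the left inverse gives injectivity of $Y_t=I+\td Y_t$, the Fredholm alternative upgrades this to invertibility, and $Z_t=Y_t^{-1}$ follows by multiplying $Z_tY_t=I$ on the right by $Y_t^{-1}$. Your route is shorter and avoids the second stochastic computation and the uniqueness argument for the $U_t$-equation; its price is that it leans on compactness of the perturbation, so it is tied to the Hilbert--Schmidt structure of $\td Y_t$, whereas the paper's purely SDE-based argument would survive in settings where the perturbation is merely bounded. Both are valid proofs of the stated proposition.
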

\begin{proof}
We show that $Y_t Z_t = Z_t Y_t =  I$  on $\mc H$. 
Define $\td Y_t = Y_t -  I$, $\td Z_t = Z_t -  I$.
Remark that $Y_t$ and $Z_t$ are not Hilbert-space-valued processes while $\td Y_t$ and $\td Z_t$ are:
 they take values in $\mc L_2(\mc H)$. 
By It\^o's formula in $\mc L_2(\mc H)$, 
\mm{
\td Z_t \td Y_t = -\int_0^t Z_s \hat\sig(s,X_s) \td Y_s ds  -  \int_0^t Z_s \pl_x \hat\sg(s,X_s) \td Y_s dB_s
+ \int_0^t \td Z_s \pl_x \hat b(s,X_s) Y_s ds  \\ +  \int_0^t\td Z_s \pl_x\hat\sg(s,X_s)  Y_s dB_s
- \sum_{k=1}^d \int_0^t Z_s \pl_x \hat \sg_k(s,X_s) \pl_x \hat \sg_k(s,X_s) Y_s ds\\
= \int_0^t Z_s \hat\sig(s,X_s)ds + \int_0^t Z_s \hat\sg(s,X_s)dB_s - \int_0^t \hat b(s,X_s) Y_s ds
- \int_0^t \hat \sg(s,X_s) Y_s dB_s\\
= - Z_t - Y_t + 2 I,
}
which immediately implies that $Z_t Y_t = I$ a.s.
Likewise,
\mm{
\td Y_t \td Z_t = - \int_0^t \td Y_s Z_s \, \sig(s,X_s) ds  - \int_0^t \td Y_s Z_s\, \pl_x \hat\sg(s,X_s) dB_s
+\int_0^t \pl_x \hat b(s,X_s) Y_s \td Z_s ds  \\ +  \int_0^t \pl_x \hat\sg(s,X_s) Y_s \td Z_s dB_s
- \sum_{k=1}^d \int_0^t  \pl_x \hat \sg_k(s,X_s)Y_s Z_s \pl_x \hat \sg_k(s,X_s) ds.
}
From here, we obtain an SDE, in $\mc L_2(\mc H)$, for the process $U_t = Y_t Z_t -  I$
\mm{
U_t = - \int_0^t U_s  \sig(s,X_s) ds  - \int_0^t U_s \, \pl_x \hat\sg(s,X_s)  dB_s
+ \int_0^t \pl_x \hat b(s,X_s) U_s  ds  \\ +  \int_0^t  \pl_x\hat\sg(s,X_s) U_s dB_s
- \sum_{k=1}^d \int_0^t  \pl_x \hat \sg_k(s,X_s) U_s \pl_x \hat \sg_k(s,X_s) ds.
}
Note that $U_t = 0$ is a solution. By uniqueness, it is unique. Therefore, $Y_t Z_t =   I$ a.s.,
and hence, $Z_t$ is the inverse operator of $Y_t$ on $\mc H$.
\end{proof}

 \section{Deriving equation \rf{m-e}}
 \lb{me}
Here we derive equation \rf{m-e}. 
Once we have this equation,
the smoothness of the density can be  obtained by employing the scheme from \cite{HP2}. 
Since \rf{m-e} involves rough integrals, we need to obtain  sufficient conditions allowing to interpret stochastic
integrals in $\mc H$ and $\mc L(\mc E_p,\mc H)$ as rough integrals in $\mc E_p$ and $\mc L(\mc E_p)$, respectively,
 and the corresponding SDEs as RDEs.

\subsection{H\"older regularity of the map  ${t\mto \ind_{[t,T]}}$}
\lb{ind-f}
As we have mentioned, the choice of the Banach space 
$\mc E_p$, $p\in (1,\frac32)$, as the space of the lift
 is made, in particular, for achieving  a sufficient  H\"older regularity of the map $t\mto \ind_{[t,T]}$, which is involved
 in the definition of all maps lifted to $\mc E_p$ by formula \rf{ep-lift} and, therefore, determines their H\"older regularity.
 Due to the importance of this argument in our analysis, 
we place it in this separate subsection.
 
\begin{lem}
\lb{lem12}
The map $[0,T]\to L_p([0,T], \mu)$, $t\mto \ind_{[t,T]}$ is $\frac1{p}$-H\"older continuous 
on each interval $[s_1,s_2]$ with the property that $[s_1,s_2)\cap \supp \mu_s = \varnothing$; furthermore, 
the H\"older norm equals $1$. 
\end{lem}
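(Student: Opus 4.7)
The plan is to compute the $L_p(\mu)$-distance between $\ind_{[t_1,T]}$ and $\ind_{[t_2,T]}$ explicitly and then use the absence of singular mass on $[s_1,s_2)$ to reduce the calculation to the Lebesgue measure of an interval.

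First I would observe that for $s_1\lt t_1 < t_2 \lt s_2$, the difference $\ind_{[t_1,T]}-\ind_{[t_2,T]}$ is exactly $\ind_{[t_1,t_2)}$. Raising to the $p$-th power and integrating against $\mu$, the difference of indicators becomes its own $p$-th power, so
\ee{
\|\ind_{[t_1,T]}-\ind_{[t_2,T]}\|_{L_p(\mu)}^p = \int_0^T \ind_{[t_1,t_2)}(r)\,\mu(dr) = \mu([t_1,t_2)).
}
Because $[t_1,t_2)\subset [s_1,s_2)$ and $[s_1,s_2)\cap\supp\mu_s=\varnothing$ by hypothesis, the singular part contributes nothing: $\mu_s([t_1,t_2))=0$. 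Therefore $\mu([t_1,t_2))=\la([t_1,t_2))=t_2-t_1$, and taking $p$-th roots yields
\ee{
\|\ind_{[t_1,T]}-\ind_{[t_2,T]}\|_{L_p(\mu)} = (t_2-t_1)^{1/p},
}
which is exactly the $\frac1{p}$-H\"older bound with constant $1$.

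To conclude that the H\"older seminorm equals $1$ (and not merely that it is at most $1$), I would note that the displayed identity gives equality, not inequality, so the supremum over pairs $(t_1,t_2)\in [s_1,s_2]^2$ of $(t_2-t_1)^{-1/p}\|\ind_{[t_1,T]}-\ind_{[t_2,T]}\|_{L_p(\mu)}$ is literally $1$. No step here is genuinely hard: the only subtle point is the careful use of the half-open interval $[s_1,s_2)$ in the hypothesis, which ensures that even when $t_2=s_2$ the interval $[t_1,t_2)$ still lies inside $[s_1,s_2)$ and therefore avoids $\supp\mu_s$; this is what allows the exact equality $\mu([t_1,t_2))=t_2-t_1$ to hold up to the endpoint.
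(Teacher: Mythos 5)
Your proof is correct and follows essentially the same route as the paper: both compute $\|\ind_{[t_1,T]}-\ind_{[t_2,T]}\|_{L_p(\mu)}^p=\mu([t_1,t_2))$ exactly and use the half-open interval together with $[s_1,s_2)\cap\supp\mu_s=\varnothing$ to reduce this to Lebesgue measure, yielding the exact identity $(t_2-t_1)^{1/p}$ and hence H\"older constant $1$.
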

\begin{proof}
Let $t, t+\Dl t\in [s_1,s_2]$.
We have
\aa{
\|\ind_{[t,T]}-\ind_{[t+\Dl t,T]}\|_{L_p(\mu)} =
\begin{cases}
\mu[t,t+\Dl t)^\frac1{p}, \quad \text{if} \;  \Dl t>0\\
\mu[t+\Dl t,t)^\frac1{p}, \quad \text{if} \;  \Dl t<0
\end{cases}
\hspace{-3mm}= |\Dl t|^\frac1{p}.
}
Indeed, if $\Dl t>0$, then $t\ne s_2$; if $\Dl t<0$, then $t\ne s_1$. Hence, $\mu_s[t,t+\Dl t) = \mu_s[t+\Dl t,t) = 0$.
\end{proof}
\begin{rem}
\rm
Remark that the map $t\mto \ind_{[t,T]}$ from Lemma \ref{lem12} is, in general, just measurable. However, due to (A2), its
restriction to $[\tau_0,\tau]$ is $\frac1{p}$-H\"older continuous. 
\end{rem}

\subsection{$\mb X_t$ and $\mb Z_t$ as controlled rough paths}
\lb{sub3.3}

We start by introducing necessary notation.
For $\al\in (\frac13,\frac12)$ and a Banach space $E$, we define the space $\ms C^\al([r,T], E)$ 
of $\al$-H\"older rough paths as the space of pairs $(\bar X, \mbb X)$ 
satisfying Chen's relation $\mbb X_{s,t} - \mbb X_{s,u} - \mbb X_{u,t} = \dl \bar X_{s,u} \ox \dl \bar X_{u,t}$
and such that
\aa{
\|\bar X\|_\al = \sup_{\substack{s,t\in [r,T]\\ s\ne t}} \frac{\|\dl \bar X_{s,t}\|_E}{|t-s|^\al} < \infty \quad \text{and} \quad 
 \|\mbb X\|_{2\al} = \sup_{\substack{s,t\in [r,T]\\ s\ne t}}  \frac{\|\mbb X_{s,t}\|_{E\ox E}}{|t-s|^{2\al}} < \infty,
}
where $\dl \bar X_{s,t} = \bar X_t - \bar X_s$. 
Define the seminorm
\aa{
\|(\bar X, \mbb X)\|_\al = \|\bar X\|_\al + \|\mbb X\|_{2\al}.
}
Further, $\ms C^\al_g([r,T], E)$ denotes the space of geometric rough paths, i.e., 
the space of pairs $(\bar X,\mbb X) \in \ms C^\al([r,T],E)$ satisfying the relation 
$2 \, {\rm Sym}(\mbb X_{s,t}) = \dl \bar X_{s,t} \ox \dl \bar X_{s,t}$.
Remark that we use the symbol $\bar X$ for the first component of an  $\al$-H\"older rough path $(\bar X, \mbb X)$
to distinguish it from the symbol $X$ which is used for the solution to the SDE \rf{sde1}.

For a fixed $\om$, we let $B_t(\om)$ be an $\al$-H\"older continuous path of the 
$d$-dimensional Brownian motion $B_t = (B^1_t, \ldots, B^d_t)$, $\al \in (\frac13,\frac12)$, and 
$\mathbf B = \mathbf B^{\textrm{Str}} = (B, \mathbb B^{\textrm{Str}})$ 
be its lift to $\ms C_g^\al([r,T],\Rnu^d)$, i.e., the path $B_t(\om)$ itself enhanced with 
$\mathbb B^{\textrm{Str}}_{s,t} (\om)= \big(\int_s^t(B_r - B_s) \ox \circ \, dB_r\big)(\om)$, 
where the integration is understood in the Stratonovich sense. 
It is known that (see \cite{FH}) that $(B, \mathbb B^{\textrm{Str}} )\in \ms C_g^\al([r,T],\Rnu^d)$ a.s. 
In addition, we will deal with an It\^o-enhanced
Brownian motion, and the corresponding lift will be denoted by 
$\mathbf B^{\textrm{It\^o}} = (B, \mathbb B^{\textrm{It\^o}})$, where  $\mathbb B^{\textrm{It\^o}}_{s,t}(\om) 
= \big(\int_s^t(B_r - B_s) \ox  dB_r\big)(\om)$.

Further let  $\C^\al([r,T], E)$ denote the space of $\al$-H\"older continuous paths
with the norm $\|\fdot\|_{\C^\al} = \|\fdot\|_\infty + \|\fdot\|_\al$. 

Finally, let  $F$ be another Banach space and 
$\bar X\in \C^\al([r,T], E)$. For $\al\in (\frac13,\frac12)$, let
$\ms D^{2\al}_{\bar X}([r,T], F)$
denote the space of controlled rough paths with respect to $\bar X$, i.e., the space of pairs
$(U,U')\in \C^\al([r,T], F) \x \C^\al([r,T], \mc L(E, F))$ such that the term $R^U_{s,t}$, given
through the relation
$\dl U_{s,t}= U'_s \, \dl \bar X_{s,t} + R^U_{s,t}$
has the property that $\|R^U\|_{2\al} = \sup_{s\ne t\in [r,T]} \frac{\|R^U_{s,t}\|_F}{|t-s|^{2\al}} < \infty$.
We call $U'$ the Gubinelli derivative of $U$ with respect to $\bar X$. 
The norm in  $\ms D^{2\al}_{\bar X}([r,T],F)$ is defined as follows:
\aa{
\|(U,U')\|_{\bar X,\al} = \|U(r)\|_F + \|U'\|_{\C^\al} + \|R^U\|_{2\al}.
}
Everywhere below, the integrals with respect to $\mbf B^{\textrm{It\^o}}$ and  $\mbf B =  \mathbf B^{\textrm{Str}}$ are understood as in \cite{FH} (subsection 4.3).

Proposition \ref{pro4.4} is the key result to interpret $X_t$ and $Z_t$ as controlled rough paths with respect to $B_t$.
More specifically, we will show that under some assumptions, an It\^o stochastic integral $I_t = \int_r^t Q_s dB_s$
has a version which is controlled by $B_t$.
Moreover, 
the integrand $Q_t$ possesses a version which a.s. coincides with the Gubinelli derivative of $I_t$.
The proof of Proposition \ref{pro4.4} follows the lines of Theorem 3.1
in \cite{FH} (Kolmogorov's criterion for rough paths); however, it is adapted to a different object: instead of the second-order process
$\mbb X_{s,t}$, 
we deal with the It\^o stochastic integral $R_{s,t} =   \int_s^t (Q_r  - Q_s) dB_r$.
 \begin{pro}
 \lb{pro4.4}
 Let $B_t$ be a $d$-dimensional Brownian motion and $Q_t$ be a stochastic process with vales in 
 $\mc L(\Rnu^d, E)$, where $E$ is a Banach space. Assume that the  It\^o stochastic integral 
 $I_t = \int_r^t Q_s dB_s$, $r\in (0,T)$, exists in $E$ and define  $R_{s,t} =  \dl I_{s,t} - Q_s \dl B_{s,t}$.
 Further assume that there exist  $q > 2$, $\beta\in (\frac1{q},\frac12]$, and a constant $C>0$
 such that for all $s,t\in [r,T]$,
 \aaa{
 \lb{3.2}
\E \|\dl Q_{s,t}\|^q_E \lt C |t-s|^{q\beta} \quad \text{and} \quad \E\|R_{s,t}\|^{\frac{q}2}_E \lt C |t-s|^{q\beta}.
 }
 Then, for all $\al\in (0, \beta - q^{-1})$, there exists  a version of $R_{s,t}$ 
 and a positive random variable $k_\al(\om)$ such that $\E|k_\al|^{\frac{q}2}<\infty$ and for all $s,t\in [r,T]$,
 \aaa{
 \lb{rst}
 \|R_{s,t}\|_E \lt k_\al |t-s|^{2\al} \quad \text{a.s.}
 }
If, moreover, $\beta - q^{-1}>\frac13$, then there exist $\al$-H\"older continuous versions of $Q_t$ and $I_t$,
where $\al\in(\frac13, \beta - q^{-1})$,  such that $I_t$ is controlled by $B_t$ and $I'_t = Q_t$ a.s.
 \end{pro}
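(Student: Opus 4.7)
The starting point is an algebraic identity playing, for $R$, the role that Chen's relation plays for the second-level iterated integral in the classical Kolmogorov criterion for rough paths. By additivity of the It\^o integral, for any $s\lt u\lt t$ in $[r,T]$,
\aa{
R_{s,t} - R_{s,u} - R_{u,t} = -Q_s\dl B_{s,t} + Q_s \dl B_{s,u} + Q_u \dl B_{u,t} = \dl Q_{s,u}\,\dl B_{u,t}.
}
This is the structural hypothesis that will allow a Kolmogorov-type dyadic chaining exactly as in Theorem 3.1 of \cite{FH}, with the correction $\dl X_{s,u}\ox \dl X_{u,t}$ of that proof replaced by $\dl Q_{s,u}\,\dl B_{u,t}$.

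The plan is as follows. First, apply the classical Kolmogorov continuity criterion to $Q$: the first moment bound in \rf{3.2} yields a $\gm$-H\"older version of $Q$ on $[r,T]$ for every $\gm<\beta-q^{-1}$, with $\|Q\|_\gm\in L^q(\Om)$. The Brownian motion $B$ is itself $\alpha'$-H\"older for every $\alpha'<\tfrac12$, with all H\"older-norm moments finite. Second, for each dyadic level $n$, let $\mc D_n=\{r+k(T-r)2^{-n}\}_{k=0}^{2^n}$ and put
\aa{
K_n \;=\; \max_{0\lt k<2^n}\|R_{t_k^n,t_{k+1}^n}\|_E.
}
The second moment bound in \rf{3.2} combined with a union bound gives $\E K_n^{q/2}\lt C\,2^n h_n^{q\beta}= C'\,2^{n(1-q\beta)}$, hence, by Chebyshev and Borel--Cantelli (after multiplying by $2^{2n\al}$ with $\al<\beta-q^{-1}$), a random variable $k_\al^{(0)}\in L^{q/2}(\Om)$ such that $K_n\lt k_\al^{(0)}\,h_n^{2\al}$ for every $n$ simultaneously.

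Third, for arbitrary $s<t$ in $[r,T]$ pick the minimal $m$ with $h_m<t-s$, approximate $[s,t]$ by an increasing sequence of dyadic partitions of $[s,t]$ obtained by refining $\mc D_m$ down to $\mc D_n$, and iterate the Chen-like identity. One obtains a telescoping decomposition
\aa{
R_{s,t} \;=\; \sum_{\text{dyadic pieces}} R_{t_i,t_{i+1}} \;+\; \sum_{\text{levels}} \dl Q_{\cdot,\cdot}\,\dl B_{\cdot,\cdot},
}
in which the first sum is estimated by $k_\al^{(0)}$ times a geometric series in $h_n^{2\al}$ and each correction term is bounded by $\|Q\|_\gm\|B\|_{\alpha'}|t-s|^{\gm+\alpha'}$. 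Choosing $\gm<\beta-q^{-1}$ close enough to $\beta-q^{-1}$ and $\alpha'<\tfrac12$ close to $\tfrac12$ gives $\gm+\alpha'>2\al$ (which is possible precisely because $\al<\beta-q^{-1}\lt \beta<\tfrac12$), so the correction terms are absorbed into the same $|t-s|^{2\al}$ scale. Setting $k_\al$ equal to the resulting maximum of $k_\al^{(0)}$ and a polynomial in $\|Q\|_\gm$ and $\|B\|_{\alpha'}$, one obtains \rf{rst} with $\E|k_\al|^{q/2}<\infty$. Redefining $R_{s,t}$ on a null set yields the required version.

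Finally, assume $\beta-q^{-1}>\tfrac13$ and fix $\al\in(\tfrac13,\beta-q^{-1})$. The H\"older version of $Q$ from the first step is in particular $\al$-H\"older. From $\dl I_{s,t}=R_{s,t}+Q_s\,\dl B_{s,t}$ and the bound just proved, $\dl I_{s,t}$ is $\al$-H\"older (the $R$-term is even $2\al$-H\"older), so $I$ admits an $\al$-H\"older version. The same decomposition identifies $Q$ as the Gubinelli derivative of $I$ with remainder $R$ satisfying $\|R\|_{2\al}<\infty$, so $(I,Q)\in\ms D^{2\al}_B([r,T],E)$ as claimed.

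I expect the main obstacle to be the bookkeeping in the chaining step: the Chen-correction terms $\dl Q\cdot\dl B$ are random and must be controlled simultaneously in $n$ by random variables with finite $L^{q/2}$-moment, which forces a careful choice of the auxiliary H\"older exponents $\gm$ and $\alpha'$ and a joint Borel--Cantelli argument together with the dyadic maxima of $R$. The rest is routine adaptation of \cite{FH}, Theorem 3.1.
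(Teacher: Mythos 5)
Your proposal is correct and follows essentially the same route as the paper: the paper also adapts Theorem 3.1 of \cite{FH} to $R_{s,t}$, using dyadic maxima of $\|R\|$ together with the telescoping identity $R_{s,t}=\sum_i\big(R_{s_i,s_{i+1}}+\dl Q_{s,s_i}\,\dl B_{s_i,s_{i+1}}\big)$, which is just your Chen-type relation iterated along the binary-expansion partition of $[s,t]$. The only cosmetic difference is that the paper controls the correction terms by including the dyadic increments of $Q$ and $B$ in the same maxima $K_n$ and obtains $k_\al\in L_{q/2}$ by summing the weighted series $\sum_n K_n 2^{n\al}$, $\sum_n \bar K_n 2^{2n\al}$ directly in $L_q$ and $L_{q/2}$ (rather than via Borel--Cantelli, which by itself would not give the moment bound), but your construction yields the same quantities.
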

 \begin{rem}
 \rm
 Proposition \ref{pro4.4} will be applied in the Banach spaces $E=\mc E_p$ and $E=\mc L(\mc E_p)$,
 where stochastic integrals exist either as elements of $\mc H$,
 which is a subspace of $\mc E_p$,
 or as elements of $\mc L(\mc E_p,\mc H)$, which is a subspace of $\mc L(\mc E_p)$. 
 \end{rem}
\begin{proof}[Proof of Proposition \ref{pro4.4}]
For simplicity, we take $T=1$ and $r=0$. Otherwise, we go through the same arguments with
the partition points multiplied by $T-r$ and shifted by $r$ to the right.

For $n\gt 1$, define $D_n = \{k\, 2^{-n}, k=0, \ldots, 2^n -1\}$.  Remark that the number of elements in $D_n$ is $2^n$.
Recall that for the Brownian motion $B_t$ it holds that 
 \aa{
 \E|B_t-B_s|^q = C_q |t-s|^\frac{q}2 \lt C |t-s|^{q\beta}
 }
 for all $s,t\in [0,1]$ and for some constant $C_q>0$.
 The second inequality holds since
 in \rf{3.2}, without loss of generality, we can immediately take $C>C_q$.
In what follows, whenever we deal with the norm of $E$, we skip the index.
Let us define
\aa{
K_n = \sup_{t\in D_n}\max\{\|\dl Q_{t,t+2^{-n}}\|, |\dl B_{t,t+2^{-n}}|\}, \qquad  \bar K_n = \sup_{t\in D_n} \|R_{t,t+2^{-n}}\|.
}
From \rf{3.2}, it follows that
\aa{
&\E (K^q_n) \lt \E \sum_{t\in D_n} ( \|\dl Q_{t,t+2^{-n}}\|^q +  |\dl B_{t,t+2^{-n}}|^q )
\lt 2^{n+1} C 2^{-n \beta q} = 2 C 2^{-n(\beta q -1)}, \\
&\E (\bar K^{\frac{q}{2}}_n) \lt \E \sum_{t\in D_n} \|R_{t,t+2^{-n}}\|^{\frac{q}2} \lt 2^n C 2^{-n\beta q} = C 2^{-n(\beta q -1)}.
}
Let $D= \cup_n D_n$. Take $s,t\in D$, $s<t$, and note that there exists $m\in \Nnu$ such that $2^{-m} \lt t-s < 2^{-{(m-1)}}$. Furthermore,
there exists a partition $\{s=s_0<s_1< \ldots < s_N = t\}\sub  D$ with the property that for each $i\in \{1,\ldots, N\}$, there exists $n\gt m$ such that
$s_{i+1} - s_i = 2^{-n}$  and all the partition intervals have different lengths.
Indeed, there exists $M\in\Nnu$ such that $s,t\in D_M$ and $t-s =  \sum_{i=m}^M a_i 2^{-i}$, where $a_i\in \{0,1\}$.
%
We have
\begingroup
\setlength{\belowdisplayskip}{0pt}
\setlength{\abovedisplayskip}{0pt} 
\aaa{
\lb{qst}
\|\dl Q_{s,t}\| \lt \max_{0< i \lt  N} \|\dl Q_{s,s_{i}}\| \lt \sum_{i=0}^{N-1} \|\dl Q_{s_i,s_{i+1}}\| \lt \sum_{n\gt m} K_n.
}
\endgroup
Likewise,
$|\dl B_{s,t}| \lt \sum_{i=0}^{N-1} |\dl B_{s_i,s_{i+1}}| \lt  \sum_{n\gt m} K_n$.
Next, we note that
\begingroup
\setlength{\belowdisplayskip}{4pt}
\setlength{\abovedisplayskip}{0pt} 
\mm{
\hspace{-2mm} R_{s,t} = \sum_{i=0}^{N-1}( \dl I_{s_i,s_{i+1}} -Q_s \dl B_{s_i,s_{i+1}}) 
=\sum_{i=0}^{N-1}( \dl I_{s_i,s_{i+1}} -Q_{s_i} \dl B_{s_i,s_{i+1}} + \dl Q_{s,s_i} \dl B_{s_i,s_{i+1}})\\
= \sum_{i=0}^{N-1}( R_{s_i,s_{i+1}} +  \dl Q_{s,s_i} \dl B_{s_i,s_{i+1}}).
}
\endgroup
Remark that the above equation and the following estimate are the only essential differences between this proof and the proof of Theorem 3.1
in \cite{FH}. We have
\mmm{
\lb{rst1}
\|R_{s,t}\| 
\lt  \sum_{i=0}^{N-1} \|R_{s_i,s_{i+1}}\| + \max_{0< i <N}  \|\dl Q_{s,s_i}\|  \sum_{i=0}^{N-1} |\dl B_{s_i,s_{i+1}}|\\
\lt \sum_{n\gt m} \bar K_n + \Big(\sum_{n\gt m} K_n\Big)^2.
}
By \rf{qst} and the choice of $m$, 
\begingroup
\setlength{\belowdisplayskip}{4pt}
\setlength{\abovedisplayskip}{0pt} 
\aa{
\max\Big\{\frac{|\dl B_{s,t}|}{|t-s|^\al}, \frac{\|\dl Q_{s,t}\|}{|t-s|^\al}\Big\} \lt \sum_{n\gt m} K_n 2^{m\al} \lt  \sum_{n\gt m} K_n 2^{n\al} 
\lt K_\al,
}
\endgroup
where $K_\al=  \sum_{n\gt 1} K_n2^{n\al}$. Let us prove that $K_\al$ is in $L_q$ whenever $0< \al  < \beta - q^{-1}$.
We have
\aa{
\|K_\al\|_{L_q} \lt \sum_{n\gt 1} 2^{n\al} \big(\E[K_n^q]\big)^\frac1{q} \lt (2C)^{\frac1{q}}
 \sum_{n\gt 1} 2^{-n(\beta - \frac1{q} -\al)} < \infty.
}
Further, by \rf{rst1} and, again, the choice of $m$,
\aa{
\frac{\hspace{-2mm} \|R_{s,t}\|}{|t-s|^{2\al}} \lt  \sum_{n\gt m} \bar K_n 2^{2m\al} +  \Big(\sum_{n\gt m} 2^{m\al}  K_n\Big)^2
\lt \bar K_\al + K_\al^2,
}
 where
$\bar K_\al =  \sum_{n\gt 1} \bar K_n 2^{2n\al}$. Whenever $0< \al  < \beta - q^{-1}$, we obtain that  $\bar K_\al$ is in $L_{q/2}$ since
\aa{
\|\bar K_\al\|_{\frac{q}2} \lt  \sum_{n\gt 1} 2^{2n\al} \big(\E[\bar K_n^{\frac{q}2}]\big)^{\frac2{q}}
\lt C^{\frac2{q}} \sum_{n\gt 1} 2^{-2n(\beta-\frac1{q} -\al)} < \infty.
}
Thus, \rf{rst} holds  with $k_\al = \bar K_\al + K_\al^2 \in L_{q/2}$ for $s,t\in D$, $s<t$.
Since $D$ is dense in $[0,1]$, the above estimates, combined with standard arguments, imply
the existence of an $\al$-H\"older continuous version of $Q_t$. Furthermore, we note that
\aa{
\|\dl I_{s,t}\| \lt (\|Q\|_\infty K_\al + k_\al) |t-s|^\al
}
which implies the existence of an $\al$-H\"older continuous version of $I_t$.
Finally, on $D\x D$, $R_{s,t}$
is continuous as a two-parameter process; therefore, it possesses a unique extension to $[0,1]\x [0,1]$
preserving property \rf{rst}. 
This extension represents a version of $R_{s,t}$ since it is continuous in $(s,t)$ in the $L_q$-norm.
\end{proof}
 \begin{lem}
 \lb{lem780}
 Assume (A1). Then,
for  $\al<\frac12$, there exist $\al$-H\"older continuous versions of the solution $X(t)$ to \rf{sde},
of the solution $X_t$ to \rf{sde1} in $\mc E_p$, 
and of the solution  $Z_t$ to \rf{inverse} in  $\mc L(\mc E_p)$.
Moreover, for all $q\gt 2$, 
\aa{
\E\, \big\{\|X(\fdot)\|_{\al}^q + \|X\|_{\al}^q + \|Z\|_{\al}^q\big\} < \infty.
}
 \end{lem}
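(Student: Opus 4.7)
The plan is to establish, for every $q\gt 2$, a moment estimate of the form $\E\|U_t - U_s\|^q \lt C_q|t-s|^{\frac{q}2}$ for each of the three processes $X(\fdot)$, $X$, and $Z$, and then invoke Kolmogorov's continuity theorem to produce $\al$-H\"older versions (for any $\al<\frac12$) whose H\"older seminorms have finite $L_q$-moments.

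First I would handle $X(t)$. By Lemma \ref{lem11} combined with the linear growth contained in (A1), $X(\fdot)\in \mc S_q([0,T],\Rnu^n)$ for every $q\gt 2$. Writing $X(t)-X(s)$ as the sum of a Lebesgue and an It\^o integral over $[s,t]$, the classical BDG inequality in $\Rnu^n$ and the uniform-in-$r$ $L_q$-bounds on $b(r,X_r)$ and $\sg(r,X_r)$ that follow from linear growth yield $\E|X(t)-X(s)|^q \lt C_q|t-s|^{\frac{q}2}$. Kolmogorov's criterion then supplies the $\al$-H\"older version of $X(\fdot)$ with $\E\|X(\fdot)\|_\al^q<\infty$ for $\al<\frac12$.

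For $X_t$ in $\mc E_p$, the cleanest route uses the block form $X_t=\vec{X^t}{X(t)}$. For $s<t$ one has $(X^t-X^s)(r)=0$ for $r\lt s$, $(X^t-X^s)(r)=X(r)-X(s)$ for $s<r\lt t$, and $(X^t-X^s)(r)=X(t)-X(s)$ for $r>t$, so
\aa{
\|X_t-X_s\|_{\mc E_p} \lt \bigl(\mu[0,T]^{\frac2{p}}+1\bigr)^{\frac12}\,\|X(\fdot)\|_\al\,|t-s|^\al.
}
Raising to the $q$-th power and using the step-one bound on $\E\|X(\fdot)\|_\al^q$ delivers the conclusion for $X$.

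The most delicate step is $Z_t$, because $\mc L(\mc E_p)$ is not 2-smooth in general and the BDG inequality \rf{f7777} cannot be applied there directly. My plan is to work at the level of the adjoint equation \rf{eq-tdz}, where $\td Z_t^*=Z_t^*-I$ lives in the 2-smooth Banach space $\gm(\mc H,\mc E_p^*)$ by Proposition \ref{pro4}. For $s<t$, I would write $\td Z_t^*-\td Z_s^*$ as a sum of one Bochner integral and $d$ stochastic integrals over $[s,t]$ and estimate each summand in the $\gm(\mc H,\mc E_p^*)$-norm via \rf{f7777}. Two kinds of integrands appear: (i) $\pl_x\hat\sg_k(r,X_r)^*\,\td Z_r^*$, which belongs to $\gm(\mc H,\mc E_p^*)$ by the ideal property, using $\pl_x\hat\sg_k(r,X_r)^*\in\mc L(\mc E_p^*)$ from Lemma \ref{lem79} together with $\td Z_r^*\in\gm(\mc H,\mc E_p^*)$; (ii) $\pl_x\hat\sg_k(r,X_r)^*$ itself, which lies in $\gm(\mc H,\mc E_p^*)$ by Corollary \ref{lem78} with norm bounded uniformly in $(r,\om)$ thanks to (A1). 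Combined with the $L_q$-bound on $\sup_{r}\|\td Z_r^*\|_{\gm(\mc H,\mc E_p^*)}$ from Proposition \ref{pro4}, the BDG estimate yields $\E\|\td Z_t^*-\td Z_s^*\|^q_{\gm(\mc H,\mc E_p^*)}\lt C_q|t-s|^{\frac{q}2}$. I would then transfer this to $\mc L(\mc E_p)$ using the continuous embedding $\gm(\mc H,\mc E_p^*)\hookrightarrow\mc L(\mc H,\mc E_p^*)$, the isometry $\|T\|_{\mc L(\mc H,\mc E_p^*)}=\|T^*\|_{\mc L(\mc E_p,\mc H)}$, and the continuous inclusion $\mc H\hookrightarrow\mc E_p$ from \rf{norms}, which delivers $\E\|Z_t-Z_s\|^q_{\mc L(\mc E_p)}\lt C_q|t-s|^{\frac{q}2}$; Kolmogorov then concludes.

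The hard part is precisely this $Z$-step: one must avoid applying BDG directly in $\mc L(\mc E_p)$ and instead route the whole argument through the 2-smooth space $\gm(\mc H,\mc E_p^*)$, verifying at every composition that the ideal property of $\gm$-radonifying operators keeps the integrands in $\gm(\mc H,\mc E_p^*)$ with norm controlled uniformly in time and $\om$.
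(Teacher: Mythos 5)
Your proposal is correct and follows essentially the same route as the paper: moment bounds of order $|t-s|^{q/2}$ plus Kolmogorov's criterion, with the delicate $Z$-estimate handled exactly as in the paper, namely through the adjoint equation for $\td Z_t^*$ in the 2-smooth space $\gm(\mc H,\mc E_p^*)$ via \rf{f7777}, and then transferred to $\mc L(\mc E_p)$ through $\|Z_t-Z_s\|_{\mc L(\mc E_p)}\lt K\|\td Z_t^*-\td Z_s^*\|_{\mc L(\mc H,\mc E_p^*)}$. The only (harmless) deviation is that for $X_t$ in $\mc E_p$ you deduce H\"older continuity pathwise from $\|X(\fdot)\|_\al$, whereas the paper bounds $\E\|X_t-X_s\|_{\mc E_p}^q$ by the $\mc H$-moment estimate and applies Kolmogorov again; both arguments work.
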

 \begin{rem}
 \lb{rem340}
 \rm
 The norms $\|X\|_\al$ and $\|Z\|_\al$ are defined as in Subsection \ref{sub3.3}, with respect
 to the norms of $\mc E_p$ and $\mc L(\mc E_p)$, respectively.
 \end{rem}
 \begin{proof}[Proof of Lemma \ref{lem780}]
 The proof immediately follows from  Kolmogorov's continuity theorem  since for all $q\gt 2$,
\eqn{
\lb{qzx}
& \E|X(t) - X(s)|_{\Rnu^n}^q \hspace{-4mm} &&\lt  \E\|X_t - X_s\|_{\mc E_p}^q \lt K_1 \E\|X_t - X_s\|_{\mc H}^q \lt K_2 |t-s|^\frac{q}2,\\
 &\E\|Z_t-Z_s\|_{\mc L(\mc E_p)}^q  \hspace{-4mm} && \lt 
 K_3\,\E\|Z_t-Z_s\|_{\mc L(\mc E_p,\mc H)}^q 
 = \E\|\td Z^*_t-\td Z^*_s\|_{\mc L(\mc H,\mc E^*_p)}^q \\
& &&\lt \E\|\td Z^*_t- \td  Z^*_s\|_{\gm(\mc H, \mc E^*_p)}^q \lt K_4 |t-s|^\frac{q}2,
}
where $K_i$, $i=1,2,3,4$, are positive constants. The last inequality follows from \rf{f7777}.
Remark that since $Z_t = I +\td Z_t$, where $\td Z_t \in \mc L(\mc E_p,\mc H)$, it holds that
$Z_t - Z_s \in \mc L(\mc E_p,\mc H)$ a.s.
 \end{proof}
 In Propositions \ref{pro3} and \ref{pro5}, following below, we show that $X_t$ and $Z_t$ are
 controlled rough paths with respect to $B_t$ on $[\tau_0,\tau]$.
 \begin{pro}
\lb{pro3}
Assume (A1)--(A3).  Let $X_t$ be the $\al$-H\"older continuous version of the solution to 
\rf{sde1},  where $\al\in (\frac13,\frac12)$.
Then, for almost every $\om$, $(X(\om),X'(\om))\in \ms D^{2\al}_{B(\om)}([\tau_0,\tau], \mc E_p)$.
Moreover, for $t\in [\tau_0,\tau]$, $X'_t = \hat \sg(t,X_t)$, 
and for all $q\gt 1$, $\E\|R^X\|_{2\al}^q < \infty$. 
\end{pro}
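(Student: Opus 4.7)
The plan is to apply Proposition \ref{pro4.4} to the stochastic integral $I_t = \int_{\tau_0}^t \hat\sg(u,X_u)\, dB_u$ with integrand $Q_t = \hat\sg(t,X_t)$, viewed as an element of $\mc L(\Rnu^d,\mc E_p)$, and then read off the controlled-rough-path structure of $X_t$ from the SDE \rf{sde1}. For $s,t\in [\tau_0,\tau]$ we write
\begin{equation*}
\dl X_{s,t} = \int_s^t \hat b(u,X_u)\, du + \hat\sg(s,X_s)\dl B_{s,t} + R^I_{s,t}, \qquad R^I_{s,t} = \int_s^t(\hat\sg(u,X_u) - \hat\sg(s,X_s))\, dB_u,
\end{equation*}
so that the drift, being globally Lipschitz in $t$ and $(2\al)$-H\"older with room to spare, is absorbed into $R^X$; once $(I_t,\hat\sg(t,X_t))$ is identified as a controlled rough path, we obtain $X'_t = \hat\sg(t,X_t)$ and $R^X_{s,t} = \int_s^t \hat b(u,X_u)\, du + R^I_{s,t}$.

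The first hypothesis of Proposition \ref{pro4.4} requires an $L_q$-H\"older estimate for $Q_t$ in $\mc E_p$. The crucial decomposition is
\begin{equation*}
\hat\sg_k(t,X_t) - \hat\sg_k(s,X_s) = \vec{(\ind_{[t,T]} - \ind_{[s,T]})\sg_k(t,X_t)}{0} + \vec{\ind_{[s,T]}(\sg_k(t,X_t) - \sg_k(s,X_s))}{\sg_k(t,X_t) - \sg_k(s,X_s)}.
\end{equation*}
By Lemma \ref{lem12} combined with (A2), the first summand has $\mc E_p$-norm at most $|t-s|^{1/p}|\sg_k(t,X_t)|$ on $[\tau_0,\tau]$, while (A1) and (A3) give $|\sg_k(t,X_t) - \sg_k(s,X_s)| \lt C(|t-s| + \|X_t - X_s\|_{\mc E_p})$ via boundedness of $\pl_x\sg_k$ and continuous differentiability of $\sg_k$ in $t$ on $[\tau_0,\tau]\x\mc E$. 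Using the estimate $\E\|X_t - X_s\|_{\mc E_p}^q \lt C|t-s|^{q/2}$ from the proof of Lemma \ref{lem780}, together with the polynomial growth bounds from (A1) and the finite moments of $\|X_\fdot\|_\infty$, we obtain
\begin{equation*}
\E\|\hat\sg(t,X_t) - \hat\sg(s,X_s)\|_{\mc L(\Rnu^d,\mc E_p)}^q \lt C_q|t-s|^{q/2},
\end{equation*}
where we exploit that $1/p > 1/2$ to absorb $|t-s|^{q/p}$ into $|t-s|^{q/2}$ on the bounded interval $[0,T]$.

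For the second hypothesis, $R^I_{s,t}$ actually takes values in the Hilbert subspace $\mc H$, which is 2-smooth and embeds contractively into $\mc E_p$ by \rf{norms}. Applying the Banach-space BDG inequality \rf{f7777} componentwise in $\mc H$ and then Jensen's inequality produces
\begin{equation*}
\E\|R^I_{s,t}\|_{\mc E_p}^{q/2} \lt C_q\, \E\Bigl(\int_s^t\|\hat\sg(u,X_u) - \hat\sg(s,X_s)\|_{\mc H}^2\, du\Bigr)^{q/4} \lt C|t-s|^{q/2},
\end{equation*}
where at the intermediate step we repeat the decomposition above in the $\mc H$-norm, noting that $\|\ind_{[t,T]} - \ind_{[s,T]}\|_H = |t-s|^{1/2}$ on $[\tau_0,\tau]$. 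Hence Proposition \ref{pro4.4} applies with $\beta = 1/2$ and $q$ chosen arbitrarily large; for each $\al\in(1/3,1/2)$, picking $q$ so that $\beta - q^{-1} > \al$ and $q/2$ exceeds any prescribed moment produces $\al$-H\"older continuous versions of $Q_t$ and $I_t$ on $[\tau_0,\tau]$, the identification $I'_t = \hat\sg(t,X_t)$, and the bound $\E|k_\al|^r < \infty$ for every $r\gt 1$. Combining with the trivial $r$-th moment bound on the drift contribution to $R^X$ yields $\E\|R^X\|_{2\al}^q < \infty$ for all $q\gt 1$.

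The main obstacle is making the discontinuous factor $\ind_{[t,T]}$ in the lift \rf{ep-lift} compatible with the H\"older scaling demanded by Proposition \ref{pro4.4}: this is precisely the reason for working in $\mc E_p$ with $p\in(1,3/2)$ rather than in $\mc H$, since Lemma \ref{lem12} then supplies the exponent $1/p > 1/2$ which renders the deterministic contribution of $\ind_{[t,T]}$ subordinate to the stochastic $|t-s|^{1/2}$ scale of the Brownian driver. The rest is a careful bookkeeping of the two BDG estimates — one in $\mc E_p$ for the H\"older regularity of $Q_t$, one in $\mc H$ for the remainder $R^I_{s,t}$ — each tailored to the regularity afforded by the relevant Banach space.
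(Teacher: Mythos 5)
Your proposal is correct and follows essentially the same route as the paper: apply Proposition \ref{pro4.4} with $Q_t=\hat\sg(t,X_t)$ and $\beta=\tfrac12$, estimate the increments of $Q$ and the remainder through the $\mc H$-norm (using the embedding \rf{norms}, the $\tfrac12$-H\"older continuity of $t\mto\ind_{[t,T]}$ from Lemma \ref{lem12} with (A2), and the BDG-type inequality \rf{f7777}), then identify $X'_t=\hat\sg(t,X_t)$ and absorb the drift into $R^X_{s,t}=R_{s,t}+\int_s^t\hat b(r,X_r)\,dr$. Your explicit splitting of $\dl\hat\sg_k$ into the indicator-increment and coefficient-increment parts is just a spelled-out version of the estimate the paper cites to (A1)--(A3) and Lemma \ref{lem12}.
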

 \begin{proof}
Let us apply Proposition \ref{pro4.4}  to show that there exists a version of the stochastic integral 
$I_t = \int_{\tau_0}^t  \hat\sg(s,X_s) dB_s$  controlled by $B_t$ on $[\tau_0,\tau]$ in $\mc E_p$.
Take $Q_t = \hat\sg(t,X_t)$, $E=\mc E_p$, $\beta = \frac12$. For all $s,t\in[\tau_0,\tau]$ and $q\gt 2$,
\aa{
\E \|\dl Q_{s,t}\|^q_{\mc E_p^d}  \lt \check K_q  \E\|\dl Q_{s,t}\|^q_{\mc H^d} \lt \mathring K_q (t-s)^{\frac{q}2},
}
where $\check K_q, \mathring K_q$ are positive constants. The second inequality holds by (A1), (A2) and due to the fact that the 
function $[\tau_0,\tau] \to L_2([0,T],\la)$,  $s\mto \ind_{[s,T]}$ 
is $\frac12$-H\"older continuous (Lemma \ref{lem12}). 
Next, for all $s,t\in[\tau_0,\tau]$, $s<t$, we  define 
$R_{s,t} = \dl I_{s,t}  - \hat\sg(s,X_s) \dl B_{s,t}$.
Then, for $q\gt 2$, by the Burkholder-Davis-Gundy inequality, 
\mm{
\E\|R_{s,t}\|^q_{\mc E_p}   \lt 
\td K_q \E \|R_{s,t}\|^q_{\mc H}  
\lt \hat K_q \sum_{k=1}^d  \Big( \int_s^t \E\|\hat \sg_k(r,X_r)- \hat \sg_k(s,X_s)\|^2_{\mc H} 
dr\Big)^\frac{q}2 \\
\lt \bar K_q (t-s)^{\frac{q}2 - 1}\hspace{-2mm}\int_s^t (r-s)^{\frac{q}2} dr
\lt K_q (t-s)^q,
}
where the constants of type $K_q$ are positive. 
By Proposition \ref{pro4.4}, 
there exists a positive random variable $k_\al$ and a version of $R_{s,t}$ 
such that $\E|k_\al|^q < \infty$ for all $q\gt 2$ and
\aaa{
\lb{xrstal}
\|R_{s,t}\|_{\mc E_p} \lt k_\al(\om) (t-s)^{2\al}.
}
Since $\dl I_{s,t}=  \hat\sg(s,X_s)  \dl B_{s,t}+ R_{s,t}$, then $I'_t = \hat\sg(t,X_t)$. 
Finally, defining
\aaa{
\lb{xrst}
R^X_{s,t} = R_{s,t} + \int_s^t \hat b(r,X_r) dr,
}
from  \rf{sde1} we obtain
\aa{
\dl X_{s,t} = \hat\sg(s,X_s)  \dl B_{s,t} + R^X_{s,t} \qquad \text{for all} \;  s,t \in  [\tau_0,\tau], \; s<t, \quad \text{a.s.}
}
Thus, we have proved that $X_t(\om)$ is controlled by $B_t(\om)$ on $[\tau_0,\tau]$ a.s. in $\mc E_p$.
Finally, $\E\|R^X\|_{2\al}^q < \infty$ for all $q\gt 1$ by \rf{xrstal} and \rf{xrst}.
\end{proof}
\begin{pro}
\lb{pro5}
Assume (A1)--(A3). Let 
$X_t$ and $Z_t$ be the $\al$-H\"older continuous versions of the solutions to \rf{sde1}  and 
\rf{inverse}, respectively, where $\al\in (\frac13,\frac12)$.
Then, for almost every $\om$, $(Z(\om),Z'(\om))\in \ms D^{2\al}_{B(\om)}([\tau_0,\tau], \mc L(\mc E_p))$. 
Moreover, $Z'_t = -Z_t \pl_x\hat\sg(t,X_t)$ for $t\in [\tau_0,\tau]$, 
and for all $q\gt 1$, $\E\|R^{Z}\|_{2\al}^q < \infty$. 
\end{pro}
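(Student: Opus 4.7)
The plan is to mirror the proof of Proposition \ref{pro3}, applying Proposition \ref{pro4.4} on $[\tau_0,\tau]$ to the martingale part of the SDE \rf{inverse}. Specifically, I would take $I_t = -\int_{\tau_0}^t Z_s\pl_x\hat\sg(s,X_s)\,dB_s$, $Q_t = -Z_t\pl_x\hat\sg(t,X_t)$, $E=\mc L(\mc E_p)$, $\beta=\frac12$, and set $R_{s,t} = \dl I_{s,t} - Q_s\dl B_{s,t} = \int_s^t(Q_r - Q_s)\,dB_r$. Once the two bounds in \rf{3.2} are verified, Proposition \ref{pro4.4} will give an $\al$-Hölder continuous version of $I_t$ controlled by $B_t$ with Gubinelli derivative $Q_t$ and with $\|R_{s,t}\|_{\mc L(\mc E_p)}\lt k_\al|t-s|^{2\al}$ for some $k_\al$ having finite moments of every order. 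Defining $R^Z_{s,t} = R_{s,t} - \int_s^t Z_r\hat\sig(r,X_r)\,dr$ (the drift piece is bounded by $\sup_{[\tau_0,\tau]}\|Z_r\hat\sig(r,X_r)\|_{\mc L(\mc E_p)}\cdot(t-s)$, with all moments finite by Proposition \ref{pro4}, (A1) and Corollary \ref{lem78}) and using \rf{inverse}, I obtain $\dl Z_{s,t} = Q_s\dl B_{s,t} + R^Z_{s,t}$, hence $Z'_t = -Z_t\pl_x\hat\sg(t,X_t)$ and the claimed moment bound on $\|R^Z\|_{2\al}$.

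The first bound $\E\|\dl Q_{s,t}\|^q_{\mc L(\mc E_p)}\lt K|t-s|^{q/2}$ follows from the splitting
\ee{
\dl Q_{s,t} = -(\dl Z_{s,t})\pl_x\hat\sg(s,X_s) - Z_t\bigl(\pl_x\hat\sg(t,X_t) - \pl_x\hat\sg(s,X_s)\bigr),
}
combined with: Corollary \ref{lem78} to bound $\pl_x\hat\sg_k$ uniformly in $\mc L(\mc E_p,\mc H)$; the $L_q$-estimate $\E\|\dl Z_{s,t}\|^q_{\mc L(\mc E_p)}\lt K|t-s|^{q/2}$ from the proof of Lemma \ref{lem780}; the analogous bound on $\|\dl X_{s,t}\|_{\mc E_p}$ together with the spatial Lipschitz behaviour of $\pl_x\hat\sg_k$ in its $\mc E_p$-argument from (A1); and, for the time-argument increment of $\pl_x\hat\sg_k$, the $\frac1{p}$-Hölder regularity of $t\mto\ind_{[t,T]}$ on $[\tau_0,\tau]$ from Lemma \ref{lem12} (which applies by (A2)) combined with (A3). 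Since $p\in(1,\frac32)$, the indicator contribution is strictly better than $\frac12$-Hölder, so all terms combine to the required rate.

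The second bound $\E\|R_{s,t}\|^{q/2}_{\mc L(\mc E_p)}\lt K|t-s|^{q/2}$ is the most delicate step and the principal obstacle. Because the stochastic integral in \rf{inverse} is $\mc L(\mc E_p,\mc H)$-valued in the sense of paragraph \ref{st-int3}, I would pass to the adjoint so as to work in the 2-smooth Banach space $\gm(\mc H,\mc E_p^*)$ and apply the BDG-type inequality \rf{f7777} with $E=\gm(\mc H,\mc E_p^*)$ to $R_{s,t}^* = \int_s^t(Q_r^* - Q_s^*)\,dB_r$. The integrand $\|Q_r^* - Q_s^*\|_{\gm(\mc H,\mc E_p^*)}$ is controlled, via Lemma \ref{lem18}, by $\|\dl Q_{s,r}\|_{\mc L(\mc E_p)}$ up to a constant depending only on $\mu[0,T]$, so the moment bound from the previous paragraph gives
\aa{
\E\|R_{s,t}\|^{q/2}_{\mc L(\mc E_p,\mc H)} \lt C_q\,\E\Bigl(\int_s^t \|Q_r - Q_s\|^2_{\gm(\mc H,\mc E_p^*)}\,dr\Bigr)^{q/4} \lt K_q|t-s|^{q/2},
}
and the continuous inclusion $\mc L(\mc E_p,\mc H)\hookrightarrow\mc L(\mc E_p)$ finishes the verification of \rf{3.2}. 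The key technical point, relative to Proposition \ref{pro3}, is that for operator-valued processes the BDG estimate must be routed through the adjoint and the $\gm$-radonifying class so as to exploit the 2-smooth structure of $\mc E_p^*$; once this is in place, the remainder of the argument is a direct transcription of the proof of Proposition \ref{pro3}.
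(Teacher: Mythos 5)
Your overall architecture is the same as the paper's: apply Proposition \ref{pro4.4} with $Q_t=-Z_t\pl_x\hat\sg(t,X_t)$, $E=\mc L(\mc E_p)$, $\beta=\tfrac12$, verify the two bounds in \rf{3.2} by passing to adjoints and using \rf{f7777} in $\gm(\mc H,\mc E_p^*)$, and then absorb the drift into $R^Z_{s,t}=R_{s,t}-\int_s^t Z_r\hat\sig(r,X_r)dr$. The first bound and the final assembly are fine. The genuine gap is in your verification of the second bound: you claim that $\|Q_r^*-Q_s^*\|_{\gm(\mc H,\mc E_p^*)}$ is controlled, ``via Lemma \ref{lem18}'', by $\|\dl Q_{s,r}\|_{\mc L(\mc E_p)}$ up to a constant depending only on $\mu[0,T]$, so that the $\mc L(\mc E_p)$-increment estimate from your first paragraph can be recycled. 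Lemma \ref{lem18} does not give this: it applies only to operators of the lifted form $\hat A=\vec{\ind_{[t,T]}A}{A}$ with a \emph{single} time $t$ and a finite-dimensional core $A\in\mc L(\mc E_p,\Rnu^n)$, and it bounds $\|\hat A^*\|_{\gm(\mc H,\mc E_p^*)}$ by $\|A\|_{\mc L(\mc E_p,\Rnu^n)}$, not by an $\mc L(\mc E_p)$ operator norm; the increment $\dl Q_{s,r}$ is neither of this form (it involves the composition with $Z$ and two different indicator times) nor is the claimed domination true. Indeed, already for the piece $\vec{(\ind_{[r,T]}-\ind_{[s,T]})\pl_x\sg_k(s,X_s)}{0}$ the $\mc L(\mc E_p)$-norm is of order $(r-s)^{1/p}$ (Lemma \ref{lem12}), while the $\gm$-norm of its adjoint is of order $\|S(r)-S(s)\|_{\mc L_2(\mc H,\Rnu^n)}=\sqrt{n(r-s)}$, i.e.\ of order $(r-s)^{1/2}$; since $\tfrac1p>\tfrac12$, the ratio blows up as $r\to s$, so no constant depending only on $\mu[0,T]$ can exist. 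In short, the $\gm$-norm is strictly stronger than the operator norm you propose to control it by, and this is exactly the step where the work of the proof lies.

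The repair — and what the paper actually does — is to estimate the $\gm$-norm increment directly rather than by comparison: write $\pl_x\hat\sg_k(t,X_t)^*=\pl_x\sg_k(t,X_t)^*S(t)$ with $S(t)$ as in \rf{op12}, split $\pl_x\hat\sg_k(r,X_r)^*Z_r^*-\pl_x\hat\sg_k(s,X_s)^*Z_s^*$ accordingly, and use the ideal property of $\gm$-radonifying operators together with the explicit computation $\|S(r)-S(s)\|_{\mc L_2(\mc H,\Rnu^n)}=\sqrt{n(r-s)}$, the bound \rf{t-norm}, the Lipschitz/time-regularity of $\pl_x\sg_k$ in the $\mc L(\mc E_p,\Rnu^n)$-norm from (A1), (A3), and the $\gm(\mc H,\mc E_p^*)$-moment estimates for the increments of $\td Z^*$ from \rf{qzx} (Corollary \ref{cor45}). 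This yields $\E\|Q_r^*-Q_s^*\|^q_{\gm(\mc H,\mc E_p^*)}\lt K(r-s)^{q/2}$, which is the rate you need to feed into \rf{f7777}; with this substitution the rest of your argument, including the identification $Z'_t=-Z_t\pl_x\hat\sg(t,X_t)$ and the moment bound on $\|R^Z\|_{2\al}$, goes through exactly as you describe.
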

\begin{proof}
We apply Proposition \ref{pro4.4} to show that
there exists a version of the stochastic integral $I_t = \int_{\tau_0}^t Z_s\pl_x\hat\sg(s,X_s)  dB_s$ 
controlled by $B_t$ on $[\tau_0,\tau]$ in $\mc L(\mc E_p)$. 
Take $Q_t = Z_t\pl_x\hat\sg(t,X_t) $, $E=\mc L(\mc E_p)$, $\beta = \frac12$. 
First, we note that for all $s,t\in[\tau_0,\tau]$,
\aa{
\|\dl Q_{s,t}\|_{\mc L(\mc E_p)^d} \lt  \|\dl Z_{s,t}\|_{\mc L(\mc E_p)} \|\pl_x\hat\sg(t,X_t)\|_{\mc L(\mc E_p)^d} +
\|Z_s\|_{\mc L(\mc E_p)}  \|\pl_x\hat\sg(\fdot,X_{\fdot})_{s,t}\|_{\mc L(\mc E_p)^d}. 
}
Since the moments of $\|Z_s\|_{\mc L(\mc E_p)}$ and $ \|\pl_x\hat\sg(t,X_t)\|_{\mc L(\mc E_p)^d}$ are bounded and
 the  function $[\tau_0,\tau] \to L_p([0,T],\la)$,  $t\mto \ind_{[t,T]}$ is $\frac1{p}$-H\"older continuous (Lemma \ref{lem12}), we obtain that 
for all $s,t\in[\tau_0,\tau]$, $s<t$, and $q\gt 2$,
\aa{
\E \|\dl Q_{s,t}\|^q_{\mc L(\mc E_p)^d}  \lt 
\mathring K_q   (t-s)^{\frac{q}2},
}
where $\mathring K_q$ is a positive constant. Next, for all $s,t\in[\tau_0,\tau]$, $s<t$, and $q\gt 2$, 
\mm{
\E\Big\|\int_s^t (Z_r\pl_x\hat\sg(r,X_r)  - Z_s \pl_x\hat\sg(s,X_s)) dB_r \Big\|^q_{\mc L(\mc E_p)} \\  \lt 
\check K_q \,\E\Big\|\int_s^t (Z_r\pl_x\hat\sg(r,X_r)  - Z_s \pl_x\hat\sg(s,X_s)) dB_r \Big\|^q_{\mc L(\mc E_p,\mc H)} \\
\lt \td K_q \sum_{k=1}^d\E \Big\|\int_s^t (\pl_x \hat \sg_k(r,X_r)^* Z_r^* - \pl_x \hat \sg_k(s,X_s)^* Z_s^*) dB^k_r\Big\|^q_{\gm(\mc H, \mc E_p^*)} \\
\lt \hat K_q \sum_{k=1}^d  \Big( \int_s^t \E\|\pl_x \hat \sg_k(r,X_r)^* Z_r^* - \pl_x \hat \sg_k(s,X_s)^* Z_s^*\|^2_{\gm(\mc H, \mc E_p^*)} 
dr\Big)^\frac{q}2\\
\lt \bar K_q (t-s)^{\frac{q}2 - 1}\hspace{-2mm}\int_s^t (r-s)^{\frac{q}2} dr
\lt K_q (t-s)^q,
}
where the constants of type $K_q$ are positive.
We used inequality \rf{f7777}
of Proposition \ref{pro8999} to obtain the third inequality. The fourth inequality follows from (A1),  inequalities \rf{qzx},
Lemma \ref{lem11}, Corollary \ref{cor45}, and from the following observation. As it was shown in the proof of Lemma \ref{lem18},
$\pl_x \hat \sg_k(t,X_t)^* = \pl_x\sg_k(t,X_t)^*\, S(t)$, where $S(t)$ is the operator defined by \rf{op12}. 
 By \rf{A-b}, for any $q\gt 2$,
\mmm{
\lb{obs}
\E\|\pl_x \hat \sg_k(r,X_r)^*  - \pl_x \hat \sg_k(s,X_s)^*\|^q_{\gm(\mc H, \mc E_p^*)} \\  \lt K_q\big(
\|S(r) - S(s)\|^q_{\mc L_2(\mc H,\mc \Rnu^n)} \, \E\|\pl_x \sg_k(r,X_r)\|^q_{\mc L(\mc E_p, \Rnu^n)}\\
+\|S(s)\|^q_{\mc L_2(\mc H,\mc \Rnu^n)} \, \E \|\pl_x \sg_k(r,X_r) - \pl_x \sg_k(s,X_s)\|^q_{\mc L(\mc E_p,\Rnu^n)}\big).
}
A straightforward computation of the Hilbert-Schmidt norm shows that 
\aa{
\|S(r) - S(s)\|_{\mc L_2(\mc H,\mc \Rnu^n)} = \sqrt{n (r-s)}.
}
Taking into account \rf{t-norm} and \rf{qzx}, we conclude that the right-hand side of
\rf{obs} is smaller than $(r-s)^\frac{q}2$ multiplied by a constant.

The rest of the proof is similar to the proof of Proposition \ref{pro3}. Namely, define 
\aa{
R_{s,t} = \int_s^t (-Z_r\pl_x\hat\sg(r,X_r)  + Z_s\pl_x\hat\sg(s,X_s)) dB_r.
}
By Proposition \ref{pro4.4}, for every $\al\in (\frac13,\frac12)$,
there exist a positive random variable $k_\al$ and a version of $R_{s,t}$ 
such that $\E|k_\al|^q < \infty$ for all $q > 1$ and
\aaa{
\lb{rst2al}
\|R_{s,t}\|_{\mc L(\mc E_p)} \lt k_\al(\om) (t-s)^{2\al}.
}
Since $\dl I_{s,t}= -Z_s  \pl_x\hat\sg(s,X_s)) \dl B_{s,t} + R_{s,t}$, we conclude that
$I_t$ is controlled by $B_t$  and $I'_t = -Z_t\pl_x\hat\sg(t,X_t)$. 
This and  \rf{inverse} imply that 
\aa{
\dl Z_{s,t} = -Z_s\pl_x\hat\sg(s,X_s)  \dl B_{s,t} + R^{Z}_{s,t} \qquad \text{for all} \;  s,t \in  [\tau_0,\tau], \; s<t, \quad \text{a.s.}
}
where 
\aaa{
\lb{zrst}
R^{Z}_{s,t} = R_{s,t} - \int_s^t Z_r \,\hat \sig(r,X_r)) dr
}
and $\hat\sig$ is defined by \rf{hsig}.
Thus, we have proved that $Z_t(\om)$ is controlled by $B_t(\om)$ on $[\tau_0,\tau]$  in $\mc L (\mc E_p)$ a.s.
Finally, $\E\|R^{Z}\|_{2\al}^q < \infty$ for all $q\gt 1$ by \rf{rst2al} and \rf{zrst}.
\end{proof}

\subsection{Stochastic integrals as rough integrals}
\lb{sub3.4}
\begin{pro}
\lb{int-coin1}
Assume $(Q(\om), Q'(\om)) \in \ms D^{2\al}_{B(\om)}([r,T],\mc L(\Rnu^d,\mc E_p))$
for $\al\in (\frac13,\frac12)$ and  for almost every $\om$.
Further assume that $Q_t$  and $Q'_t$ take values in $\mc H^d$ and $\mc H^{d^2}$ (respectively)
and that they are
 $\mc F_t/\ms B(\mc H^d)$- and, respectively,  $\mc F_t/\ms B(\mc H^{d^2})$-adapted processes
whose paths are a.s. continuous functions $[r,T] \to \mc H^d$ and $[r,T] \to \mc H^{d^2}\!$, respectively.
Finally, let the It\^o stochastic integral $\int_r^t Q_s  dB_s$ exist in  $\mc H$.
Then, 
\aaa{
\lb{h788}
 \int_r^t Q_s dB_s = \int_r^t  Q_s d\mbf B^{\text{\rm It\^o}}_s \quad \text{for all} \;\; t\in [r,T] \quad \text{a.s.}
}
\end{pro}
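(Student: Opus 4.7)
The plan is to show that the compensated Riemann sums defining the rough integral against $\mbf B^{\textrm{It\^o}}$ and the left-point Riemann sums defining the It\^o integral differ by an error that vanishes in probability, thanks to the martingale-increment structure of $\mbb B^{\textrm{It\^o}}$. For a partition $\mc P = \{r = t_0 < \dots < t_N = t\}$ of $[r,t]$, set
\[
S_{\mc P} = \sum_{i=0}^{N-1} \bigl(Q_{t_i}\,\dl B_{t_i,t_{i+1}} + Q'_{t_i}\,\mbb B^{\textrm{It\^o}}_{t_i,t_{i+1}}\bigr), \qquad \td S_{\mc P} = \sum_{i=0}^{N-1} Q_{t_i}\,\dl B_{t_i,t_{i+1}}.
\]
The sewing lemma gives $S_{\mc P} \to \int_r^t Q_s\,d\mbf B^{\textrm{It\^o}}_s$ in $\mc E_p$ pathwise as $|\mc P|\to 0$, while the $\mc H$-valued It\^o construction gives $\td S_{\mc P}\to \int_r^t Q_s\,dB_s$ in $L^2(\Om,\mc H)$ and hence in probability in $\mc E_p$ via the continuous imbedding $\mc H\hookrightarrow \mc E_p$. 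It therefore suffices to prove that $D_{\mc P} := S_{\mc P} - \td S_{\mc P} = \sum_i Q'_{t_i}\,\mbb B^{\textrm{It\^o}}_{t_i,t_{i+1}}$ tends to zero in probability in $\mc H$.

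The crucial observation is that each $\mbb B^{\textrm{It\^o}}_{u,v} = \int_u^v (B_s - B_u)\ox dB_s$ is an $\mc F_v$-measurable martingale increment with $\E[\mbb B^{\textrm{It\^o}}_{u,v}\mid \mc F_u] = 0$ and $\E[\|\mbb B^{\textrm{It\^o}}_{u,v}\|^2\mid \mc F_u] \lt c(v-u)^2$. Combined with the $\mc F_{t_i}$-measurability of $Q'_{t_i}\in \mc H^{d^2}$, this makes $\{Q'_{t_i}\,\mbb B^{\textrm{It\^o}}_{t_i,t_{i+1}}\}_i$ a martingale-difference sequence in the Hilbert space $\mc H$. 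Introducing the stopping time $\tau_N := T\we \inf\{s\gt r: \|Q'_s\|_{\mc H^{d^2}}>N\}$ and the truncated sum $D^{(N)}_{\mc P} := \sum_i Q'_{t_i}\,\ind_{\{\tau_N>t_i\}}\,\mbb B^{\textrm{It\^o}}_{t_i,t_{i+1}}$, Hilbert-space orthogonality together with the tower property yields
\[
\E\bigl\|D^{(N)}_{\mc P}\bigr\|_{\mc H}^2 \lt c\sum_{i} \E\bigl[\|Q'_{t_i}\ind_{\{\tau_N>t_i\}}\|_{\mc H^{d^2}}^2\bigr] (t_{i+1}-t_i)^2 \lt c\,N^2 (T-r)\,|\mc P|,
\]
which vanishes as $|\mc P|\to 0$. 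Since $D_{\mc P} = D^{(N)}_{\mc P}$ on $\{\tau_N>t\}$ and $\PP(\tau_N>t)\to 1$ as $N\to\infty$ by the a.s. continuity of $Q'$ into $\mc H^{d^2}$, a diagonal argument gives $D_{\mc P}\to 0$ in probability in $\mc H$, and hence in $\mc E_p$.

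Extracting a subsequence of partitions along which all three convergences hold almost surely in $\mc E_p$ identifies $\int_r^t Q_s\,dB_s$ with $\int_r^t Q_s\,d\mbf B^{\textrm{It\^o}}_s$ at the fixed time $t$. The extension from a.s. equality at fixed $t$ to a.s. equality for all $t\in[r,T]$ simultaneously follows from joint $t$-continuity of both sides: the It\^o integral by standard theory, the rough integral by the sewing lemma.

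The main technical obstacle is the Hilbert-space orthogonality estimate for the bilinear expressions $Q'_{t_i}\,\mbb B^{\textrm{It\^o}}_{t_i,t_{i+1}}$: one has to expand the squared $\mc H$-norm of a sum involving bilinear contractions, condition on $\mc F_{t_i}$, and reduce the cross-terms to the coordinatewise It\^o isometries for the scalar components $\mbb B^{\textrm{It\^o},k\ell}_{u,v}$. The hypothesis that $Q'$ takes values in $\mc H^{d^2}$ rather than merely $\mc E_p^{d^2}$ is exactly what enables this Hilbertian martingale-difference argument to be carried out cleanly.
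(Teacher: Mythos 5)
Your proposal follows essentially the same route as the paper's proof: both compare the compensated rough-path Riemann sums with the left-point It\^o sums, localize through a stopping time controlling $\|Q'_t\|_{\mc H^{d^2}}$, and kill the second-order term $\sum_i Q'_{t_i}\mbb B^{\textrm{It\^o}}_{t_i,t_{i+1}}$ by an $L^2(\mc H)$ martingale-orthogonality estimate of order $|\mc P|$, using $\E\big[|\mbb B^{\textrm{It\^o}}_{u,v}|^2\,\big|\,\mc F_u\big]\lt c(v-u)^2$. The only blemish is the claim that the left-point sums converge in $L^2(\Om,\mc H)$ — the hypothesis only gives existence of the It\^o integral, not square-integrability of $Q$ — but convergence in probability (or, as in the paper, a.s. convergence along a subsequence of partitions for the stopped sums) is what is actually available and it suffices for your argument.
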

\begin{proof}
We use similar arguments as those in the proof of Proposition 5.1 in \cite{FH}; however, the latter proposition cannot be applied
directly to our case since we deal with Banach-space-valued rough integrals. 

Note that the rough integral $\int_r^t Q_s d\mbf B^{\text{\rm It\^o}}_s$  exists in $\mc E_p$  for all $t\in [r,T]$ by
Theorem 4.10 in \cite{FH}. For each $M>0$, introduce the stopping time $\tau_M = \inf\{r<t \lt T: \|Q'_t\|_{\mc H^{d^2}} > M\}\we T$
and show that
$\int_r^{t\we\tau_M} Q_s d\mbf B^{\text{\rm It\^o}}_s = \int_r^{t\we\tau_M} Q_s  dB_s$ a.s.
 Remark that by the assumption on the adaptedness of $Q'_t$ and the 
 continuity of  its paths, one immediately verifies that $\tau_M$ is indeed a stopping time.
Furthermore, we note that there exists a sequence of partitions 
 $\mc P_n=  \{r=s^n_0<\dots<s^n_{N(n)} = t\}$ of $[r,t]$ such that $|\mc P_n|\to 0$ as $n\to\infty$ and
$\int_r^{t\we \tau_M} Q_s dB_s = \lim_{n\to\infty} \sum_k Q_{s^n_k\we\tau_M}(B_{s^n_{k+1}\we\tau_M} - B_{s^n_k\we\tau_M})$
 a.s., where the limit is taken in $\mc H$, and, consequently, it exists $\mc E_p$.
We have to prove that
\aa{
\E\Big\| \sum_i Q'_{s_k\we\tau_M} \mbb B^{{\text{It\^o}}}_{s_k\we\tau_M,s_{k+1}\we\tau_M}\Big\|^2_{\mc E_p} = \mc O(|\mc P|),
}
where $\mc P = \{r=s_0<\dots<s_N = t\}$ is a partition of $[r,t]$. 
By \rf{norms}, we have that the left-hand side of the above identity is smaller (up to a multiplication by a positive constant)  than 
\mm{
\E\Big\| \sum_i Q'_{s_k\we\tau_M} \mbb B^{{\text{It\^o}}}_{s_k\we\tau_M,s_{k+1}\we\tau_M}\Big\|^2_{\mc H} 
\lt dM^2 \sum_i \E|\mbb B_{s_k\we\tau_M,s_{k+1}\we\tau_M}^{{\text{It\^o}}}|^2\\
\lt dM^2 \sum_{i,k}  \int_{s_k}^{s_{k+1}}\E(B^k_{r\we\tau_M} - B^k_{s_k\we\tau_M})^2 dr 
 = \mc O(|\mc P|).
}
We used the fact that since $(B^k_t)^2 - t$ is a martingale, so is $(B^k_{t\we\tau_M})^2 - t\we\tau_M$.
\end{proof}

\begin{pro}
\lb{int-coin}
Assume that $(Q(\om), Q'(\om)) \in \ms D^{2\al}_{B(\om)}([r,T],\mc L(\Rnu^d,\mc L(\mc E_p)))$, for some $r\in (0,T)$,
$\al\in (\frac13,\frac12)$,  and for almost every $\om$.
Further assume that for each $i,j \in \{1,\ldots, d\}$, $(Q_te_i)^*$  and $(Q'_t e_i \ox e_j)^*$ a.s.
take values in $\gm(\mc H,\mc E_p^*)$ and that they are
$\mc F_t/\ms B(\gm(\mc H,\mc E_p^*))$-adapted processes
whose paths are  a.s. continuous functions $[r,T] \to \mc \gm(\mc H,\mc E_p^*)$. 
Let the stochastic integral $\int_r^t Q_s  dB_s$ 
exist in $\mc L(\mc E_p,\mc H)$ in the sense of paragraph \ref{st-int3}.
 Then, 
\aaa{
\lb{h789}
 \int_r^t Q_s dB_s = \int_r^t  Q_s d\mbf B^{\text{\rm It\^o}}_s  \quad \text{for all} \;\; t\in [r,T] \;\; \text{a.s.}
}
\end{pro}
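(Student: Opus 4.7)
The plan is to adapt the template of Proposition \ref{int-coin1} to the present operator-valued setting, working systematically with adjoints so as to exploit that $\gm(\mc H,\mc E_p^*)$ is a 2-smooth Banach space. Decomposing $Q_s=\sum_{k=1}^d (Q_se_k)\ox e_k$, the It\^o integral in $\mc L(\mc E_p,\mc H)$ is, by definition (paragraph \ref{st-int3}), the adjoint of $\sum_k\int_r^t(Q_se_k)^*dB^k_s$, each summand of which is an element of $\gm(\mc H,\mc E_p^*)$ by Proposition \ref{pro8999}. On the other hand, $\int_r^t Q_s\,d\mbf B^{\text{It\^o}}_s$ exists in $\mc L(\mc E_p)$ by Theorem 4.10 in \cite{FH}, and identity \rf{h789} is to be established in $\mc L(\mc E_p)$.

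Next, introduce the stopping time
$$
\tau_M=\inf\Big\{t\in(r,T]:\sum_{i,j=1}^d \|(Q'_t\,e_i\ox e_j)^*\|_{\gm(\mc H,\mc E_p^*)}>M\Big\}\we T,
$$
which is a stopping time by the adaptedness and path continuity of $(Q'_t\,e_i\ox e_j)^*$. It suffices to prove the identity with $t$ replaced by $t\we\tau_M$ and then let $M\to\infty$. Along a refining sequence of partitions $\mc P_n=\{r=s_0^n<\cdots<s_{N(n)}^n=t\}$, the $\gm$-radonifying It\^o isometry of Proposition \ref{pro8999} applied componentwise to $(Q_se_k)^*$ gives
$$
\Big(\int_r^{t\we\tau_M}Q_s\,dB_s\Big)^*=\lim_{n\to\infty}\sum_k Q^*_{s_k\we\tau_M}(B_{s_{k+1}\we\tau_M}-B_{s_k\we\tau_M}) \;\; \text{in}\;\; L^2(\Om;\gm(\mc H,\mc E_p^*)).
$$
Taking adjoints and using the continuity of the inclusion $\mc L(\mc E_p,\mc H)\sub \mc L(\mc E_p)$, the same Riemann sums converge in $L^2(\Om;\mc L(\mc E_p))$ to $\int_r^{t\we\tau_M}Q_s\,dB_s$. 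On the other hand, by the definition of the rough integral as a compensated Riemann sum (\cite{FH}, Section 4.3), $\int_r^{t\we\tau_M}Q_s\,d\mbf B^{\text{It\^o}}_s$ is the $\mc L(\mc E_p)$-limit of those same Riemann sums augmented by the correction $\sum_k Q'_{s_k\we\tau_M}\,\mbb B^{\text{It\^o}}_{s_k\we\tau_M,s_{k+1}\we\tau_M}$.

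It thus remains to show that this correction tends to zero in $L^2(\Om;\mc L(\mc E_p))$. Passing to adjoints componentwise, this is dominated by
$$
\E\Big\|\sum_{k,i,j}(Q'_{s_k\we\tau_M}\,e_i\ox e_j)^*\,\mbb B^{\text{It\^o},ij}_{s_k\we\tau_M,s_{k+1}\we\tau_M}\Big\|^2_{\gm(\mc H,\mc E_p^*)},
$$
whose summands form a martingale difference sequence with respect to $\{\mc F_{s_k\we\tau_M}\}$. By the 2-smoothness of $\gm(\mc H,\mc E_p^*)$ together with inequality \rf{mt2}, and the stopping bound $\sum_{i,j}\|(Q'_{s_k\we\tau_M}\,e_i\ox e_j)^*\|_{\gm(\mc H,\mc E_p^*)}\lt M$, this is bounded by
$$
\bar C\,M^2\sum_k \E\big|\mbb B^{\text{It\^o}}_{s_k\we\tau_M,s_{k+1}\we\tau_M}\big|^2\lt C\,M^2\sum_k(s_{k+1}-s_k)^2=\mc O(|\mc P|),
$$
exactly as in Proposition \ref{int-coin1}, using that $(B^j_{t\we\tau_M})^2-t\we\tau_M$ is a martingale.

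The main obstacle is the systematic dualization of the construction in paragraph \ref{st-int3} so that all convergence statements take place in the 2-smooth Banach space $\gm(\mc H,\mc E_p^*)$, where both Proposition \ref{pro8999} and inequality \rf{mt2} are available. Once this is set up, the remainder of the argument parallels Proposition \ref{int-coin1} closely; the only additional subtlety is to check that localization by the $\gm(\mc H,\mc E_p^*)$-norm of $(Q'_t\,e_i\ox e_j)^*$ yields a stopping time, which is guaranteed by the assumed adaptedness and path continuity of the dual process in $\gm(\mc H,\mc E_p^*)$.
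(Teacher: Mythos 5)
Your proposal is correct and follows essentially the same route as the paper's proof: localization by a stopping time controlling the $\gm(\mc H,\mc E_p^*)$-norm of the Gubinelli derivative, identification of the It\^o integral (after dualization to the 2-smooth space $\gm(\mc H,\mc E_p^*)$) as a limit of left-point Riemann sums, and an $\mc O(|\mc P|)$ bound on the compensator term $\sum_k Q'_{s_k\we\tau_M}\mbb B^{\text{It\^o}}_{s_k\we\tau_M,s_{k+1}\we\tau_M}$ via the martingale-type 2 inequality \rf{mt2} and the martingale property of $(B^j_{t\we\tau_M})^2-t\we\tau_M$. The only minor deviation is that you assert $L^2(\Om)$-convergence of the Riemann sums, which is not guaranteed since only $Q'$ (not $Q$) is localized; the paper instead takes a.s. convergence along a subsequence of partitions and formalizes your martingale-difference observation through an explicit discrete martingale with indicator cutoffs, but this does not affect the structure or validity of your argument.
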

\begin{proof}
The proof goes by a similar argument as in Proposition \ref{int-coin1}.
The existence of the rough integral $\int_r^t Q_s d\mbf B^{\text{\rm It\^o}}_s$ in $\mc L(\mc E_p)$  follows from
Theorem 4.10 in \cite{FH}. Next,
for each $M>0$, introduce the stopping time 
$\tau_M = \inf\{r<t \lt T: \max_{i,j}\|(Q'_t(\om) e_i \ox e_j)^*\|_{\mc \gm(\mc H, \mc E_p^*)} > M\}\we T$
and show that, a.s.,
$\int_r^{t\we\tau_M} Q_s d\mbf B^{\text{\rm It\^o}}_s = \int_r^{t\we\tau_M} Q_s  dB_s$.
By the adaptedness of $(Q'_t e_i \ox e_j)^*$ and the 
 continuity of  its paths, one shows that $\tau_M$ is a stopping time.
Furthermore, there exists a sequence $\mc P_n=  \{r=s^n_0<\dots<s^n_{N(n)} = t\}$ of partitions of $[r,t]$ 
such that $|\mc P_n|\to 0$ as $n\to\infty$ and, a.s.,
\mm{
\int_r^{t\we \tau_M}  (Q_se_i)^* dB^i_s = \int_r^{t}\ind_{[r,\tau_M]}(s)  (Q_se_i)^* dB^i_s\\
= \lim_{n\to\infty} \sum_k (Q_{s^n_k \we \tau_M}e_i)^*(B^i_{s^n_{k+1}\we \tau_M} - B^i_{s^n_k\we \tau_M}),
}
where the limit is taken in $\gm(\mc H, \mc E_p^*)$,
and, consequently, it exists in $\mc L(\mc H,\mc E_p^*)$. Therefore, the limit for the adjoint operator exists 
in $\mc L(\mc E_p,\mc H)$. Namely, by \rf{LEpH}, a.s.,
\aa{
\int_r^{t\we\tau_M} \hspace{-2mm}Q_se_i\, dB^i_s = \Big(\int_r^{t\we \tau_M}   \hspace{-2mm}(Q_se_i)^* dB^i_s\Big)^*  \hspace{-2mm} 
=  \lim_{n\to\infty} 
\sum_k (Q_{s^n_k\we\tau_M}e_i)(B^i_{s^n_{k+1}\we\tau_M} - B^i_{s^n_k\we\tau_M}).
}
Note that $\mc L(\mc E_p,\mc H) \sub \mc L(\mc E_p)$; therefore, $\int_r^{t\we\tau_M}\hspace{-1mm} Q_s  dB_s$ 
is also an element of $\mc L(\mc E_p)$. It remains to prove that
\begingroup
\setlength{\belowdisplayskip}{0pt}
\setlength{\abovedisplayskip}{3pt} 
\aa{
\E\Big\| \sum_k Q'_{s_k\we\tau_M} \mbb B^{{\text{It\^o}}}_{s_k\we\tau_M,s_{k+1}\we\tau_M}\Big\|^2_{\mc L(\mc E_p)} = \mc O(|\mc P|),
}
\endgroup
where $\mc P = \{r=s_0<\dots<s_N = t\}$ is a partition of $[r,t]$.
 Define $\td Q'_t(\om)$ as an element of $\mc L(\Rnu^d \ox \Rnu^d, \mc L(\mc H, \mc E_p^*))$ such that
 $\td Q'_t(\om) e_i \ox e_j = (Q'_t(\om) e_i \ox e_j)^*$. The left-hand side of the above identity can be estimated
by
\mm{
\E\Big\| \sum_k Q'_{s_k\we\tau_M} \mbb B^{{\text{It\^o}}}_{s_k\we\tau_M,s_{k+1}\we\tau_M}\Big\|^2_{\mc L(\mc E_p,\mc H)}  =
\E\Big\| \sum_k \td Q'_{s_k\we\tau_M} \mbb B^{{\text{It\^o}}}_{s_k\we\tau_M,s_{k+1}\we\tau_M}\Big\|^2_{\mc L(\mc H, \mc E^*_p)} \\
\lt K\hspace{-1mm} \sum_{i,j=1}^d \E\Big\| \sum_k  \td Q'_{s_k\we\tau_M} e_i \ox e_j \int_{s_k\we\tau_M}^{s_{k+1}\we\tau_M}  
\hspace{-1mm} (B^i_r - B^i_{s_k\we\tau_M})dB^j_r \Big\|^2_{\gm (\mc H, \mc E^*_p)}\\
\lt K M^2 d \sum_k \int_{s_k}^{s_{k+1}}\E |B_{r\we\tau_M} - B_{s_k\we\tau_M}|^2 dr 
 = \mc O(|\mc P|),
}
where $K>0$ is a constant. 
The inequality in the last line follows from \rf{mt2}, if we consider the discrete $\mc F_{s_k}$-martingale
\aa{
M_k = \sum_{l=1}^k \int_{s_l}^{s_{l+1}} \ind_{[s_l,s_{l+1}\we\tau_M]}(r) \ind_{[r,\tau_M)}(s_l) \td Q'_{s_l} e_i \ox e_j
(B^i_r - B^i_{s_l})dB^j_r,
}
and from the fact that $\gm(\mc H, \mc E^*_p)$ is a martingale-type 2 Banach space.
\end{proof}

\subsection{Equation \rf{m-e}}
 From now on, we fix $\al = \frac1{2p}$, where $p\in (1,\frac32)$, and note that
$\al\in(\frac13,\frac12)$. 
Here, we use the results of Subsections \ref{sub3.3} and \ref{sub3.4} to obtain RDEs for $\hat V(t,X_t)$ and $Z_t$.
Applying rough It\^o's formula to $Z_t\hat V(t,X_t)$, we then derive equation \rf{m-e}. Namely, we have the following result.
\begin{thm}
\lb{thm1}
Let assumptions of Proposition  \ref{pro5} be satisfied with $\al = \frac1{2p}$. 
Further let $V: [\tau_0,\tau] \x \mc E_p \to \Rnu^n$ be a map 
with the following properties: 
\begin{itemize}
 \item[(i)] for each $t\in [\tau_0,\tau] $, $V(t,\fdot)$ is $\C^3(\mc E_p)$; 
 \item[(ii)]  on $[\tau_0,\tau] \x \mc E$, $V(t,x)$, $\pl_x V(t,x)$, $\pl^2_xV(t,x)$, and $\pl^3_xV(t,x)$
 are differentiable with respect to $t$; all the aforementioned functions and their derivatives in $t$
are continuous over $[\tau_0,\tau]\x \mc E$ (recall that $\mc E = \C([0,T],\Rnu^n) \oplus \Rnu^n$);
\item[(iii)] on $[\tau_0,\tau] \x \mc E$, $V(t,x)$, $\pl_x V(t,x)$, $\pl^2_xV(t,x)$ have at most polynomial growth 
in $x$, i.e., they are bounded by $1+\|x\|^q_{\mc E}$ multiplied by a constant.
\end{itemize}
Then, equation \rf{m-e} holds true.
\end{thm}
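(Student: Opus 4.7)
My plan is to apply the rough-path (Stratonovich) product formula to $t\mapsto Z_t\hat V(t,X_t)$ on $[\tau_0,\tau]$ and then to pre-multiply the resulting identity from the left by the $\mc F_\tau$-measurable operator $\Pi_n Y_\tau$, using $J_{\tau,t}=\Pi_n Y_\tau Z_t$. The process $\hat V(t,X_t)$ depends on $t$ in three qualitatively different ways: through $X_t$, which is controlled by $B$ in $\mc E_p$ (Proposition \ref{pro3}); through the explicit argument of $V(t,\cdot)$, which is $C^1$ in $t$ by (ii)--(iii); and through the prefactor $\ind_{[t,T]}(\fdot)$ in the lift \rf{ep-lift}, which is only $\frac{1}{p}$-H\"older by Lemma \ref{lem12}. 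These three dependencies must be integrated respectively by a rough chain rule, a classical Bochner integral, and a Young integral. With $\al=\frac{1}{2p}$, the Young condition reads $\al+\tfrac{1}{p}=\tfrac{3}{2p}>1$, which holds precisely when $p\in(1,\tfrac{3}{2})$.

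First I would verify that $\hat V(\cdot,X_\cdot)$ is a controlled rough path in $\mc E_p$ with respect to $\mbf B$, with Gubinelli derivative $\pl_x\hat V(\cdot,X_\cdot)\hat\sg(\cdot,X_\cdot)$: for the $\Rnu^n$-valued second component this is the standard $C^2$-composition with a controlled path (\cite{FH}, Chap.\ 7) combined with Proposition \ref{pro3} and (i)--(iii); for the $E_p$-valued first component one further notes that multiplication by the $\frac{1}{p}$-H\"older $L_p$-valued curve $\ind_{[\cdot,T]}$ preserves the controlled structure via a Young-type argument. Next, using Propositions \ref{int-coin1} and \ref{int-coin}, I would rewrite the It\^o SDEs \rf{sde1} and \rf{inverse} in Stratonovich rough form,
\aa{
dX_t=\hat\sg_0(t,X_t)\,dt+\hat\sg(t,X_t)\,d\mbf B_t,\qquad dZ_t=-Z_t\pl_x\hat\sg_0(t,X_t)\,dt-Z_t\pl_x\hat\sg(t,X_t)\,d\mbf B_t,
}
with $\hat\sg_0=\hat b-\tfrac{1}{2}\sum_k\pl_x\hat\sg_k\hat\sg_k$; the identity $\hat\sig+\tfrac{1}{2}\sum_k(\pl_x\hat\sg_k\pl_x\hat\sg_k-\pl_x^2\hat\sg_k\hat\sg_k)=\pl_x\hat\sg_0$ needed to match the It\^o and Stratonovich drifts of $Z$ is a direct algebraic computation from \rf{hsig}.

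Applying the rough chain rule to $\hat V(t,X_t)$ together with a Young product rule for the $\ind_{[t,T]}$ factor gives
\aa{
d\hat V(t,X_t)=\vec{V(t,X_t)}{0}\,d\ind_{[t,T]}+\widehat{\pl_t V}(t,X_t)\,dt+\pl_x\hat V(t,X_t)\,d\mbf X_t,
}
and substituting the Stratonovich form of $d\mbf X_t$ together with the Leibniz rule $d(Z_t\hat V(t,X_t))=(dZ_t)\hat V(t,X_t)+Z_t\,d\hat V(t,X_t)$ (which holds in the Stratonovich setting without a covariation correction), the drift contributions regroup as $Z_t(\pl_x\hat V\,\hat\sg_0-\pl_x\hat\sg_0\,\hat V)=Z_t[\hat\sg_0,\hat V]$ and the $d\mbf B_t$-coefficients as $Z_t(\pl_x\hat V\,\hat\sg-\pl_x\hat\sg\,\hat V)=Z_t[\hat\sg,\hat V]$; the $Z_t\widehat{\pl_t V}\,dt$ and $Z_t\vec{V(t,X_t)}{0}\,d\ind_{[t,T]}$ terms pass through intact. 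Integrating from $\tau_0$ to $t$ and premultiplying by $\Pi_n Y_\tau$ then delivers \rf{m-e}.

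The main difficulty is the rigorous derivation of this mixed rough--Young--Bochner product formula in the $\mc E_p$-valued setting: the rough It\^o formula of \cite{FH}, Chap.\ 7 handles the rough part but is stated finite-dimensionally and does not accommodate a simultaneous Young integrator. One must redo the sewing argument verifying that all cross-increments between the rough, Young, and Bochner pieces vanish on fine partitions; the key estimates are $\|\dl Z_{s,t}\|_{\mc L(\mc E_p)}\,\|\ind_{[t,T]}-\ind_{[s,T]}\|_{E_p}\lesssim|t-s|^{\al+\frac{1}{p}}=|t-s|^{3/(2p)}$ (Proposition \ref{pro5} and Lemma \ref{lem12}) and the mixed second-order term $\|\pl_x\hat V(s,X_s)\hat\sg(s,X_s)\,\dl B_{s,t}\|\,\|\ind_{[t,T]}-\ind_{[s,T]}\|_{E_p}\lesssim|t-s|^{2\al+\frac{1}{p}}$, both of exponent $>1$ throughout $[\tau_0,\tau]$ by assumption (A2). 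Once these sewing bounds are in place, the Lie-bracket cancellations are purely algebraic and Propositions \ref{int-coin1}, \ref{int-coin} close the loop between the It\^o SDEs of Sections \ref{s2}--\ref{s3} and their rough interpretations against $\mbf B$.
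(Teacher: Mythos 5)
Your proposal is correct and follows the same architecture as the paper: controlled-path structure of $X$ and $Z$ on $[\tau_0,\tau]$ (Propositions \ref{pro3}, \ref{pro5}), the coincidence of It\^o and rough integrals (Propositions \ref{int-coin1}, \ref{int-coin}), the It\^o--Stratonovich conversion giving the drift $\pl_x\hat\sg_0$ for $Z$ (your algebraic identity is exactly what Proposition \ref{r-inv} checks), a mixed rough--Young--Bochner decomposition of $\hat V(t,X_t)$, a Leibniz rule for $Z_t\hat V(t,X_t)$ without covariation correction thanks to the geometric lift, and finally premultiplication by $\Pi_n Y_\tau$ passed under the integral limits by continuity. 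The one place where your execution diverges from the paper is the intermediate formula for $d\hat V(t,X_t)$: you propose to establish the mixed chain rule by redoing the sewing argument from scratch, with the cross-increment exponents $\al+\tfrac1p$ and $2\al+\tfrac1p$ exceeding $1$ --- precisely the bookkeeping of Remark \ref{rem5690}. The paper instead deliberately avoids a direct rough It\^o formula at this stage (it remarks that such an application would require $\C^3$ regularity in $t$ and bounded $D^2F$, which the indicator factor destroys): Proposition \ref{pro675} splits the increment over a partition, treats the middle sum with the classical It\^o formula in $\mc H$ applied to $\hat V$ frozen at partition points, identifies the resulting stochastic integral with the rough integral via Proposition \ref{int-coin1}, and handles the indicator sum as a Young integral; only the bilinear product $F(A,x)=Ax$ is treated with a (mildly modified, Banach-valued) rough It\^o formula, Proposition \ref{rough-Ito-f}, whose $\Gm$-term already accommodates the Young and Bochner contributions. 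Your route buys a self-contained sewing proof of the mixed formula; the paper's route buys economy, since the hard analytic work is delegated to the classical It\^o formula in the Hilbert space and to the already-proved coincidence of integrals, at the cost of the extra partition-approximation step. Both close the argument the same way, so I regard your proposal as sound, with the sewing lemma for the mixed formula being the one component you would still have to write out in full.
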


Our argument  is divided into three steps outlined below.

%
 
 \subsubsection{Rough It\^o formula for maps of the form \rf{ep-lift}}
Proposition \ref{pro675}, following below, is a version of the rough It\^o formula for maps of the form \rf{ep-lift}. 
The major problem of these maps is that they are not differentiable in $t$. Indeed, we cannot differentiate the map
$t\mto \ind_{[t,T]}$; however, we can consider the convergence of the respective sums to a Young integral with respect to $\ind_{[t,T]}$.
In addition, we use the classical It\^o formula and Proposition \ref{int-coin1} on the coincidence 
of the rough and stochastic integrals. This allows us not to require the map to be $\C^3_b$ in both arguments (cf. Proposition 7.6 in \cite{FH}).
 \begin{pro}
 \lb{pro675}
 Assume (A1)--(A3).
 Let $V:[\tau_0,\tau] \x \mc E_p \to \Rnu^n$ be a map satisfying assumptions (i)--(iii) of Theorem \ref{thm1}.
 Further let $X_t$ be the $\frac1{2p}$-H\"older continuous version of the solution to \rf{sde1}.
 Then, the rough integral $\int_{\tau_0}^t   (\pl_x\hat V\hat\sg)(s,X_s) d \mbf  B_s$  exists in $\mc E_p$. 
 Moreover, for all $t\in [\tau_0,\tau]$, it holds that
 \mmm{
 \lb{ito-4}
 \hat V(t,X_t) = \hat V(\tau_0,X_{\tau_0})
 + \vec{\int_{\tau_0}^t V(s,X_s)d\!\ind_{[s,T]}}{0}  \\
+ \int_{\tau_0}^t (\widehat{\pl_s V} + \pl_x\hat V \hat\sg_0)(s,X_s) \, ds 
+ \int_{\tau_0}^t   (\pl_x\hat V\hat\sg)(s,X_s) d \mbf  B_s \quad \text{a.s.},
 }
where $\hat\sg_0 = \hat b - \frac12 \sum_{k=1}^d \pl_x \hat \sg_k\hat\sg_k$. 
Equation \rf{ito-4} is regarded as $\mc E_p$-valued, 
and the integral with respect to $\ind_{[s,T]}$ is understood as a Young integral.
  \end{pro}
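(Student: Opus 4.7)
The plan is to treat separately the two components of $\hat V(t,X_t)$ in the product decomposition $\mc E_p = E_p \oplus \Rnu^n$: the $\Rnu^n$-component $V(t,X_t)$, and the $E_p$-component $\ind_{[t,T]}(\fdot)V(t,X_t)$. For the first I will apply the classical finite-dimensional It\^o formula and then convert the resulting It\^o stochastic integral to the rough Stratonovich integral against $\mbf B$; for the second I will use Young integration by parts, exploiting the $\tfrac{1}{p}$-H\"older regularity of $s\mapsto\ind_{[s,T]}(\fdot)$ in $E_p$ from Lemma \ref{lem12}. Summing the two component identities produces \rf{ito-4}.

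For the finite-dimensional component, since $X_t$ is an $\mc H$-valued It\^o process with coefficients $\hat b$ and $\hat\sg$ by Proposition \ref{pro12}, assumptions (i)--(iii) together with (A1)--(A3) justify the classical It\^o formula
\aa{
V(t,X_t)= V(\tau_0,X_{\tau_0}) + \int_{\tau_0}^t \bigl[\pl_s V + \pl_x V\hat b + \tfrac12\sum_k \pl^2_x V(\hat\sg_k,\hat\sg_k)\bigr](s,X_s)\,ds + \int_{\tau_0}^t \pl_x V(s,X_s)\hat\sg(s,X_s)\,dB_s.
}
The argument of Proposition \ref{pro3}, applied to $\pl_x V(\fdot,X_\fdot)\hat\sg(\fdot,X_\fdot)$ valued in $\mc L(\Rnu^d,\Rnu^n)$, shows that this integrand is a controlled rough path with respect to $B$, so Proposition \ref{int-coin1} identifies the It\^o integral with the rough integral against $\mbf B^{\text{\rm It\^o}}$. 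The It\^o--Stratonovich conversion then absorbs the second-order term $\pl^2_xV(\hat\sg_k,\hat\sg_k)$ and the correction $\pl_xV\,\pl_x\hat\sg_k\hat\sg_k$ into $\hat\sg_0=\hat b-\tfrac12\sum_k \pl_x\hat\sg_k\hat\sg_k$, giving the $\Rnu^n$-projection of \rf{ito-4}.

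For the $E_p$-valued component, Lemma \ref{lem12} combined with (A2) shows that $s\mapsto\ind_{[s,T]}(\fdot)$ is $\tfrac{1}{p}$-H\"older into $E_p$ on $[\tau_0,\tau]$, while Lemma \ref{lem780} together with (iii) and BDG shows that $s\mapsto V(s,X_s)$ is $\tfrac{1}{2p}$-H\"older into $\Rnu^n$. Since $\tfrac{1}{p}+\tfrac{1}{2p}=\tfrac{3}{2p}>1$ for $p<\tfrac32$, Young's product rule applies and yields
\aa{
\ind_{[t,T]}(\fdot)V(t,X_t) = \ind_{[\tau_0,T]}(\fdot)V(\tau_0,X_{\tau_0}) + \int_{\tau_0}^t V(s,X_s)\,d\ind_{[s,T]}(\fdot) + \int_{\tau_0}^t \ind_{[s,T]}(\fdot)\,dV(s,X_s),
}
where the last integral is defined via the It\^o decomposition of $V(\fdot,X_\fdot)$ from the previous step. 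Applying Proposition \ref{int-coin1} once more to reinterpret the resulting $E_p$-valued It\^o integral against $B$ as a rough integral, and then converting to Stratonovich, furnishes the $E_p$-projection of \rf{ito-4}. Existence in $\mc E_p$ of the rough integral on the right of \rf{ito-4} follows by verifying that $(\pl_x\hat V\hat\sg)(\fdot,X_\fdot)$ is a controlled rough path in $\mc L(\Rnu^d,\mc E_p)$ with respect to $B$, which is established by the same template as Proposition \ref{pro3} using (A1)--(A3) and Lemma \ref{lem12}.

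The main obstacle is the interplay between the Young piece and the It\^o piece arising from the product rule: one must carefully show that the $E_p$-valued It\^o stochastic integral $\int_{\tau_0}^t \ind_{[s,T]}(\fdot)\pl_x V(s,X_s)\hat\sg(s,X_s)\,dB_s$ admits a version that is itself a controlled rough path with respect to $B$, so that its value coincides with the corresponding rough integral and can be legitimately combined with the Young integral $\int V(s,X_s)\,d\ind_{[s,T]}(\fdot)$ to produce an expression living in $\mc E_p$. This step is precisely where the choice of the state space $\mc E_p$ with $p\in(1,\tfrac32)$ is essential, since it is the $\tfrac{1}{p}$-H\"older regularity of the indicator path that simultaneously makes the Young integral well defined and allows the relevant stochastic integrals to be upgraded to rough integrals in $\mc E_p$.
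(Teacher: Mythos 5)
Your argument is correct in substance, but it is organized differently from the paper's proof, so a comparison is worthwhile. The paper never splits $\hat V(t,X_t)$ into its $E_p$- and $\Rnu^n$-components: it writes the increment over a partition of $[\tau_0,t]$ and splits each summand three ways (formula \rf{f-89}) — the indicator increment multiplied by $V(s_j,X_{s_j})$ (converging to the Young integral), the space increment, handled by It\^o's formula in $\mc H$ applied to the time-frozen map $\hat V_{\mc P}(t,x)=\hat V(\tau_0,x)+\sum_j\ind_{(s_j,s_{j+1}]}(t)\hat V(s_{j+1},x)$, and the time increment at a frozen space point (converging to $\int\widehat{\pl_s V}\,ds$) — and only then invokes Proposition \ref{int-coin1} and the It\^o--Stratonovich conversion, exactly as you do at the end. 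Your componentwise route, classical It\^o formula for the $\Rnu^n$-part plus a Young product rule for $\ind_{[t,T]}(\fdot)V(t,X_t)$, is more modular and makes the role of $3\al=\tfrac{3}{2p}>1$ transparent, but it requires two justifications that the paper's frozen-map device is built to sidestep: (a) a time-dependent It\^o formula for $V(t,X_t)$ when the $t$-regularity of $V$ is only assumed on $[\tau_0,\tau]\times\mc E$ rather than on the state space $\mc H$ (you could reuse the same frozen-time trick for this component), and (b) a Young--It\^o consistency step identifying the pathwise product-rule integral $\int_{\tau_0}^t\ind_{[s,T]}\,dV(s,X_s)$ with the It\^o expression $\int_{\tau_0}^t\ind_{[s,T]}(\cdots)\,ds+\int_{\tau_0}^t\ind_{[s,T]}\pl_xV\hat\sg\,dB_s$ (harmless, since both are limits of the same left-point Riemann sums, but it must be stated), after which Proposition \ref{int-coin1} applies and the Stratonovich correction $\tfrac12\int Q'_sE\,ds$ combines with the second-order It\^o term $\tfrac12\sum_k\pl^2_xV(\hat\sg_k,\hat\sg_k)$ to produce $\hat\sg_0$, as you indicate. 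Finally, since the projections of $\mc E_p$ onto $E_p$ and $\Rnu^n$ are bounded linear and commute with compensated Riemann sums, your two component identities do sum to the single $\mc E_p$-valued identity \rf{ito-4}, given that you verify (as you propose, via the template of Propositions \ref{pro4.4} and \ref{pro3}) that $\ind_{[s,T]}\pl_xV(s,X_s)\hat\sg(s,X_s)$ is controlled by $B$; so the proposal stands as a legitimate alternative, at the cost of these extra consistency checks.
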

 \begin{rem}
 \rm
  \lb{rem5690}
Our choice of the space $\mc E_p$ with $p\in(1,\frac32)$ is crucial for 
formula \rf{ito-4} to make sense. First of all, $\al=\frac1{2p}$ falls into the right interval $(\frac13,\frac12)$ which makes
$(X,X')$ a controlled rough path on $[\tau_0,\tau]$  by Proposition \ref{pro3}.
Secondly, $\frac1{p} = 2\al$. Hence, by Lemma \ref{lem12},  the function $t\mto \ind_{[t,T]}$ is 
$2\al$-H\"older continuous on $[\tau_0,\tau]$. This, in turn, makes the integrand $(\pl_x\hat V\hat\sg)(t,X_t)$ a controlled rough path
with respect to $B_t$
(it is $2\al$-H\"older continuous in the first argument and differentiable in the second argument),
and at the same time, the Young integral in \rf{ito-4} is well-defined  ($3\al>1$). 
   \end{rem}
 \begin{proof}[Proof of Proposition \ref{pro675}]
 Let $\mc P = \{\tau_0=s_0<s_1 < \dots < s_N = t\}$ be a partition of the interval $[\tau_0,t]\sub [\tau_0,\tau]$.
 We have
  \mmm{
 \lb{f-89}
 \hat V(t,X_t) - \hat V(\tau_0,X_{\tau_0}) =  \vec{\sum_j V(s_j,X_{s_j}) (\ind_{[s_{j+1},T]} - \ind_{[s_{j},T]})}{0}\\
 +\sum_j (\hat V(s_{j+1},X_{s_{j+1}}) - \hat V(s_{j+1}, X_{s_{j}}))
 +  \sum_j \vec{\ind_{[s_{j+1},T]} \big(V(s_{j+1},X_{s_j}) - V(s_j,X_{s_j})\big)}
 {V(s_{j+1},X_{s_{j}}) - V(s_{j},X_{s_j})}.
  }
  The  upper component of the first term on the right-hand side converges,  in the space  in $\mc E_p$, to 
 the Young integral $\int_{\tau_0}^t V(s,X_s) d \ind_{[s,T]}$.

In the second term in \rf{f-89}, we regard the map $\hat V$ as $\mc H$-valued.
Define the map $\hat V_{\mc P}: [\tau_0,\tau]\x \mc H \to \mc H$, 
$\hat V_{\mc P}(t,x) = \hat V(\tau_0,x) + \sum_{j=0}^{N-1} \ind_{(s_{j},s_{j+1}]}(t) \hat V(s_{j+1},x)$,
and note that, by It\^o's formula in $\mc H$, the second term equals
\aa{
\int_{\tau_0}^t (\pl_x\hat V_{\mc P}\, \hat b)(s,X_s) ds +  \int_{\tau_0}^t (\pl_x\hat V_{\mc P}\hat\sg)(s,X_s) dB_s
+ \frac12 \sum_k \int_{\tau_0}^t (\pl^2_x\hat V_{\mc P}\, \hat\sg_k \hat\sg_k)(s,X_s) ds,
}
where the stochastic integral is considered in $\mc H$. 

By assumptions  (ii) and (iii) (of Theorem \ref{thm1}) and the continuity of the map $s\mto \ind_{[s,T]}$,
the above expression converges to
\aaa{
\lb{st-in}
\int_{\tau_0}^t \hspace{-1mm} (\pl_x\hat V\hat b)(s,X_s) ds + \int_{\tau_0}^t (\pl_x\hat V\hat\sg)(s,X_s) dB_s
+ \frac12 \sum_k \int_{\tau_0}^t (\pl^2_x\hat V \hat\sg_k \hat\sg_k)(s,X_s) ds.
}
By passing, if necessary, to a subsequence of partitions,
the above convergence holds  in $\mc H$ almost surely.

Finally, the last term on the right-hand side of \rf{f-89} converges to
$\int_{\tau_0}^t \widehat{\pl_s V}(s,X_s) ds$, where $\widehat{\pl_s V}$ is defined by formula \rf{ep-lift}
with respect to the map $\pl_s V$.

Let us show that the stochastic integral in \rf{st-in} coincides with the rough integral 
  $\int_{\tau_0}^t (\pl_x\hat V\hat\sg)(s,X_s)  d\mbf B^{\text{\rm It\^o}}_s$.
Define $\mc V(t,x) = (\pl_x\hat V\hat\sg)(t,x)$ for $t\in [\tau_0,\tau]$, 
and note that $\mc V$ is $\frac1{p}$-H\"older continuous in $t$.
We have to verify the assumptions of Proposition \ref{int-coin1} with $Q_t = \mc V(t,X_t)$.
First, we prove that $Q_t$ is controlled by $B_t$ a.s. and that
 $Q'_t = (\pl_x \mc V) \hat\sg(t,X_t)$. 
We have $\dl \mc V(\fdot,X_{\fdot})_{s,t} = \dl\mc V(\fdot,X_t)_{s,t} + \mc V(s, X_s + \dl X_{s,t}) - \mc V(s,X_s)$.
The first term has the order $(t-s)^{2\al}$ (with respect to the norm of $\mc E_p$).  
Next, since for each $s$, $\mc V(s,x)$ is $\C^2(\mc E_p)$ in $x$ and  continuous over $[\tau_0,\tau]\x \mc E$, we obtain that
the function $R^{\mc V}_{s,t}$, defined by the identity $\dl \mc V(\fdot,X_{\fdot})_{s,t}  = \pl_x \mc V(s,X_s) X'_s\dl B_{s,t} + R^{\mc V}_{s,t}$,
possesses the property that $\|R^{\mc V}\|_{2\al} = \sup_{s\ne t\in [\tau_0,\tau]} \frac{\|R^{\mc V}_{s,t}\|_{\mc E_p}}{|t-s|^{2\al}} < \infty$.
We furthermore take into account that $X'_t = \hat \sg(t,X_t)$ on $[\tau_0,\tau]$ 
to arrive at the aforementioned expression for $Q'_t$.
Hence, the rough integral $\int_{\tau_0}^t (\pl_x\hat V\hat\sg)(s,X_s) d\mbf B^{\text{\rm It\^o}}_s$ exists  in $\mc E_p$
for all $t\in [\tau_0,\tau]$.
We further note that $Q_t$ is $\mc H^d$-valued and $Q'_t$ is $\mc H^{d^2}$-valued.
The assumptions on the adaptedness and the continuity of paths of $Q_t$ and $Q'_t$ are obviously satisfied.
Therefore, by Proposition \ref{int-coin1},
 the rough and stochastic integrals $\int_{\tau_0}^t (\pl_x\hat V\hat\sg)(s,X_s)  d\mbf B^{\text{\rm It\^o}}_s$ 
 and $\int_{\tau_0}^t (\pl_x\hat V\hat\sg) (s,X_s) dB_s$ coincide a.s.  
 
 It remains to convert the integral with respect to $\mbf B^{\text{\rm It\^o}}_s$ into the integral with respect to
 $\mbf B_s = \mbf B^{\text{\rm Str}}_s$.
Since $\mbb B^{\text{\rm Str}}_{s,t} = \mbb B^{\text{\rm It\^o}}_{s,t} + \frac12(t-s)E$  (see \cite{FH}), 
where $E$ is the identity matrix in $\Rnu^{d\x d}$, then, by  Example 4.13 in \cite{FH}, 
\aaa{
\lb{it-st}
\int_{\tau_0}^t Q_s d \mbf B^{\text{\rm It\^o}}_s  = \int_{\tau_0}^t Q_s d \mbf B^{\text{\rm Str}}_s - \frac12 \int_{\tau_0}^t Q'_s E \,ds.
}
This implies that expression \rf{st-in} a.s. equals 
\aa{
\int_{\tau_0}^t (\pl_x\hat V\hat \sg_0)(s,X_s) ds + \int_{\tau_0}^t (\pl_x\hat V\hat\sg)(s,X_s) d \mbf B_s.
}
Remark that the rough integral exists in $\mc E_p$ while the stochastic integral in $\mc H$.
 \end{proof}
  \begin{rem}
\rm
One could think of applying  the rough It\^o formula
(Proposition \ref{rough-Ito-f} below) to $F(U_t)$, where $U_t = (t,X_t,\ind_{[t,T]})$,
\aa{
F: [\tau_0,\tau] \x \mc E_p \x \C^{2\al}([\tau_0,\tau], L_p([0,T]\to\Rnu,\la)) \to \mc E_p, \; 
(t,x,f) \mto \vec{fV(t,x)}{V(t,x)},
} 
and $\Gm_t = (t,0,\ind_{[t,T]})$. However, this
would  require from $F$ to be $\C^3$ in $t$ and $D^2F$ to be bounded, which is not the case.
Therefore, to obtain \rf{ito-4}, we use the classical It\^o formula and then apply Proposition \ref{int-coin1}
to conclude that the stochastic and rough integrals coincide.
\end{rem}

 \subsubsection{Rough differential equation for the inverse operator}

\begin{pro}
\lb{r-inv}
Let the  assumptions of Proposition \ref{pro5} be fulfilled and $\al = \frac1{2p}$.
Then, the rough integral $\int_{\tau_0}^t Z_s\pl_x\hat\sg(s,X_s) d\mbf B^{\text{\rm It\^o}}_s$ exists in $\mc L(\mc E_p)$ 
on $[\tau_0,\tau]$,
and
 $\int_{\tau_0}^t Z_s\pl_x\hat\sg(s,X_s) d\mbf B^{\text{\rm It\^o}}_s= \int_{\tau_0}^t Z_s\pl_x\hat\sg(s,X_s) dB_s$ a.s. 
 Moreover, $Z_t$ satisfies the RDE 
\aaa{
\lb{eq-z-1} 
Z_t = Z_{\tau_0} - \int_{\tau_0}^t Z_s \pl_x \hat \sg_0(s,X_s) \, ds - \int_{\tau_0}^t Z_s 
\pl_x\hat \sg(s,X_s)  d\mbf B_s.
}
\end{pro}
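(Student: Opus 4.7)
\medskip

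\noindent\textbf{Proof proposal.} The plan is to realise $Z_t$ as a controlled rough path whose rough integral against $\mathbf B$ coincides, on $[\tau_0,\tau]$, with the It\^o integral appearing in \rf{inverse}, and then convert It\^o into Stratonovich.

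First I would exhibit the integrand $Q_t = Z_t\,\pl_x\hat\sg(t,X_t)$ as a controlled rough path in $\mc L(\mc E_p)$ with respect to $B_t$ on $[\tau_0,\tau]$. By Proposition \ref{pro5}, $(Z,Z')\in\ms D^{2\al}_B([\tau_0,\tau],\mc L(\mc E_p))$ with $Z'_t=-Z_t\pl_x\hat\sg(t,X_t)$; by Proposition \ref{pro3}, $(X,X')\in\ms D^{2\al}_B([\tau_0,\tau],\mc E_p)$ with $X'_t=\hat\sg(t,X_t)$. For the $t$-dependence of $\pl_x\hat\sg(t,x)$ I use Lemma \ref{lem12}, which gives that $t\mto\ind_{[t,T]}$ is $\frac1p$-H\"older $=2\al$-H\"older on $[\tau_0,\tau]$; combined with (A1)--(A3), this shows that $\pl_x\hat\sg_k(\cdot,X_\cdot)$ is a controlled rough path with Gubinelli derivative $\pl^2_x\hat\sg_k(t,X_t)\,\hat\sg(t,X_t)$, exactly as in the proof of Proposition \ref{pro3}. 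The product rule for controlled rough paths then yields
\ee{
Q'_t\,e_j = -Z_t\,\pl_x\hat\sg_j(t,X_t)\,\pl_x\hat\sg(t,X_t) + Z_t\,\pl^2_x\hat\sg(t,X_t)\bigl(\hat\sg_j(t,X_t),\cdot\bigr),\quad j=1,\ldots,d,
}
with a remainder term $R^Q_{s,t}$ of order $|t-s|^{2\al}$ in $\mc L(\mc E_p)$, the polynomial estimates on higher derivatives following from (A1).

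Second, once $(Q,Q')$ is controlled, the rough integral $\int_{\tau_0}^t Q_s\,d\mathbf B^{\text{\rm It\^o}}_s$ exists in $\mc L(\mc E_p)$ by Theorem 4.10 of \cite{FH}. To identify it with the It\^o integral I would verify the hypotheses of Proposition \ref{int-coin}: by Corollary \ref{lem78} and the ideal property of $\gm$-radonifying operators, $(Q_te_i)^*=\pl_x\hat\sg_i(t,X_t)^*Z_t^*$ takes values in $\gm(\mc H,\mc E_p^*)$; the same factorisation through $\Rnu^n$, together with Lemma \ref{lem18} applied to the two summands in $Q'_t e_j$, gives $(Q'_t e_i\ox e_j)^*\in\gm(\mc H,\mc E_p^*)$. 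Adaptedness and pathwise continuity of these adjoints follow from the continuity of $X_t$ and $Z_t^*$ (Lemma \ref{lem780} and Corollary \ref{cor45}) together with the continuity of $t\mto\ind_{[t,T]}$ on $[\tau_0,\tau]$. Hence Proposition \ref{int-coin} yields
\ee{
\int_{\tau_0}^t Z_s\,\pl_x\hat\sg(s,X_s)\,d\mathbf B^{\text{\rm It\^o}}_s = \int_{\tau_0}^t Z_s\,\pl_x\hat\sg(s,X_s)\,dB_s\quad\text{a.s.}
}

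Third, I would convert It\^o into Stratonovich via Example 4.13 in \cite{FH}, i.e.
\ee{
\int_{\tau_0}^t Q_s\,d\mathbf B^{\text{\rm It\^o}}_s = \int_{\tau_0}^t Q_s\,d\mathbf B_s - \tfrac12\int_{\tau_0}^t Q'_s\,E\,ds,
}
where $E\in\Rnu^{d\x d}$ is the identity. Using the formula for $Q'_s$ above and the identity $\pl^2_x\hat\sg_k\hat\sg_k = \pl_x(\pl_x\hat\sg_k\hat\sg_k) - (\pl_x\hat\sg_k)^2$, a direct calculation gives
\ee{
\tfrac12 Q'_s E = Z_s\Bigl[-\textstyle\sum_k(\pl_x\hat\sg_k)^2 + \tfrac12\pl_x\!\bigl(\textstyle\sum_k\pl_x\hat\sg_k\hat\sg_k\bigr)\Bigr] = Z_s\,\hat\sig(s,X_s) - Z_s\,\pl_x\hat\sg_0(s,X_s),
}
where $\hat\sig$ is given by \rf{hsig} and $\hat\sg_0=\hat b-\frac12\sum_k\pl_x\hat\sg_k\hat\sg_k$. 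Subtracting \rf{inverse} taken between $\tau_0$ and $t$ from itself between $0$ and $\tau_0$, and substituting, the two drift contributions combine to $-Z_s\,\pl_x\hat\sg_0(s,X_s)\,ds$, producing \rf{eq-z-1}.

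The main obstacle is the first step: establishing that $Q_t$ is a controlled rough path in $\mc L(\mc E_p)$ and, simultaneously, that $(Q'_t e_i\ox e_j)^*$ lies in $\gm(\mc H,\mc E_p^*)$. The difficulty is that $Q'_t$ involves both the second derivative $\pl^2_x\hat\sg_k$ and a product of two first-order terms, and one must track that every such operator (and not only $Q_t$ itself) factors through $\Rnu^n$ so that the ideal property of $\gm$-radonifying operators applies; this hinges critically on the choice $p\in(1,\tfrac32)$ that makes $t\mto\ind_{[t,T]}$ of H\"older regularity $2\al>\tfrac23$.
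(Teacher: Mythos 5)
Your proposal is correct and follows essentially the same route as the paper's proof: show that $Z_t\pl_x\hat\sg(t,X_t)$ is controlled by $B$ on $[\tau_0,\tau]$ (via Propositions \ref{pro3} and \ref{pro5} and the $2\al$-H\"older regularity of $t\mapsto\ind_{[t,T]}$ from Lemma \ref{lem12}), identify the rough and It\^o integrals through Proposition \ref{int-coin} using Corollary \ref{lem78}/Lemma \ref{lem18}, the ideal property of $\gm$-radonifying operators and Proposition \ref{pro4}, and then convert It\^o into Stratonovich via \rf{it-st}. Your explicit drift computation turning $\hat\sig$ into $\pl_x\hat\sg_0$ (with the correctly ordered Gubinelli derivative of the integrand) is in fact slightly more detailed than the paper, which merely asserts the equivalence of \rf{rde1-z} and \rf{eq-z-1}.
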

\begin{proof}
Since $X_t(\om)$ and $Z_t(\om)$ are controlled by $B_t(\om)$ on $[\tau_0,\tau]$,
we can conclude that $-Z_t\pl_x\hat\sg(t,X_t)(\om)$ is also controlled by $B_t(\om)$ in $\mc L(\Rnu^d, \mc L(\mc E_p))$.
Indeed, to show that $\pl_x\hat\sg(t,X_t)$ is controlled by $B_t$ in $\mc L(\mc E_p)$, we apply the same argument as in Proposition \ref{pro675}, 
but with $\mc V(t,X_t) = \pl_x\hat\sg(t,X_t)$.  
This argument, in particular, implies that the Gubinelli derivative $(\pl_x\hat\sg(t,X_t))'$ equals $(\pl^2_x\hat\sg \hat\sg)(t,X_t)$.
Let us show the coincidence of the rough and stochastic integrals $\int_{\tau_0}^t Z_s \pl_x\hat\sg(s,X_s)  d\mbf B^{\textrm{It\^o}}_s$
and $\int_{\tau_0}^t Z_s\pl_x\hat\sg(s,X_s) dB_s$ by Proposition \ref{int-coin}. Define
$Q_s = -  Z_s \pl_x\hat\sg(s,X_s)$ which implies that
$Q'_s = - Z_s \big((\pl^2_x\hat \sg) \hat \sg + \pl_x \hat\sg \pl_x \hat\sg\big)(s,X_s)$. 
Therefore,  $Q_s e_i = -Z_s\pl_x \hat\sg_i(s,X_s)$ and $Q'_s e_i \ox e_j 
= - Z_s \big((\pl^2_x\hat \sg_i) \hat \sg_j + \pl_x \hat\sg_i \pl_x \hat\sg_j\big)(s,X_s)$. By Corollary \ref{lem78} 
and the ideal property of $\gm$-radonifying operators, 
$(Q'_s e_i )^*$ and $(Q'_s e_i \ox e_j)^*$ take values in $\gm(\mc H,\mc E_p^*)$. 
The assumptions on the  adaptedness and the continuity of paths of $(Q_s e_i)^*$ and $(Q'_s e_i \ox e_j)^*$ 
as $\gm(\mc H,\mc E_p^*)$-valued maps are obviously satisfied.
Finally, we note that the existence of the stochastic integral $\int_{\tau_0}^t Z_s \pl_x\hat\sg(s,X_s) dB_s$ 
in $\mc L(\mc E_p, \mc H)$ follows from Proposition \ref{pro4}.  Thus, Proposition \ref{int-coin} implies
the coincidence of the aforementioned rough and stochastic integrals. 
Thus, we conclude that $Z_t(\om)$ solves the RDE
\aaa{
\lb{rde1-z}
Z_t = Z_{\tau_0} - \int_{\tau_0}^t Z_s \hat \sig(s,X_s) \, ds - \int_{\tau_0}^t Z_s \pl_x\hat\sg(s,X_s)  d\mbf B^{\text{\rm It\^o}}_s,
}
where $\hat \sig$ is defined by \rf{hsig}.
By \rf{it-st}, the RDEs \rf{eq-z-1} and \rf{rde1-z} are equivalent.
\end{proof}
\begin{rem}
\rm
Similar to Remark \ref{rem5690}, we note here that the choice of the space  $\mc E_p$ with $p\in(1,\frac32)$
 is crucial for  equation \rf{eq-z-1}   to be well-defined as a rough differential equation. We again note that  $\al=\frac1{2p}$
 falls into the interval $(\frac13,\frac12)$ making both $X_t$ and $Z_t$ controlled rough paths (Propositions \ref{pro3} and \ref{pro5}).
Moreover,  the map $t\mto \ind_{[t,T]}$ is $2\al$-H\"older continuous on $[\tau_0,\tau]$; therefore, $\pl_x\hat\sg(t,\fdot)$ is 
$2\al$-H\"older continuous w.r.t. the first argument.
Hence, $Z_t \pl_x\hat\sg(t,X_t)$ is a controlled rough path 
and the rough integral in \rf{eq-z-1}  makes sense.
\end{rem}
\subsubsection{Proof of Theorem \ref{thm1}}
Now we derive equation \rf{m-e}.
First, we formulate It\^o's formula for rough paths (Proposition \ref{rough-Ito-f}) which is a minor modification of It\^o's formula from \cite{FH}
 (Proposition 7.6) convenient for our application.
 \begin{pro}
 \lb{rough-Ito-f}
 Let $E_1,E_2, E_3$ be Banach spaces and $F: E_2\to E_3$ be a $\C^3$ map such that $D^2F$ is $\C^1_b$.
  Further let $\mbf X = (\bar X, \mbb X)\in \ms C^\al([0,T], E_1)$ and
 $(U,U')\in \ms D^{2\al}_{\bar X}([0,T], E_2)$ be a controlled rough path of the form
 \aa{
 U_t = \Gm_t + \int_0^t U'_s d\mbf X_s,
 }
 where $\Gm \in \C^{2\al}([0,T],E_2)$ and $(U',U'')\in \ms D^{2\al}_{\bar X}([0,T], \mc L(E_1,E_2))$. 
 Assume that $DF$ is bounded over the set \, ${\rm Im\;} U = \{U_t, \, t\in [0,T]\}$.
 Then,
 \mmm{
 \lb{ito-1}
 F(U_t) = F(U_0) + \int_0^t DF(U_s) U'_s d\mbf X_s + \int_0^t DF(U_s)d\Gm_s \\+ \frac12 \int_0^t 
 D^2 F(U_s)(U'_s,U'_s) d[\mbf X]_s,
 }
 where $[\mbf X] : [0,T] \to {\rm Sym}(E_1\ox E_1), \; t\mto [\mbf X]_t = \dl \bar X_{0,t} \ox \dl \bar X_{0,t} -
  2 \, {\rm Sym}(\mbb  X_{0,t})$. Here, $DF$ and $D^2F$ denote Fr\'echet derivatives of $F$, 
 and the last two integrals are understood as Young integrals.
  \end{pro}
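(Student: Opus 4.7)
The plan is to adapt the argument for the rough It\^o formula from \cite{FH}, Proposition 7.6, to the weaker regularity hypothesis on $F$. The proof proceeds by a second-order Taylor expansion of $F(U_{s_{k+1}}) - F(U_{s_k})$ along a partition of $[0,t]$, identification of the resulting Riemann sums with the three integrals on the right-hand side of \rf{ito-1}, and verification that the remainders vanish as the mesh tends to zero.

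First, I would localize the problem. Since $U\in \C^\al([0,T],E_2)$, its image $\mathrm{Im}\,U$ is compact in $E_2$; combined with the uniform bound on $\|U'\|_\infty$ and the H\"older bound on $U$, this gives a neighborhood $\mc N \supset \mathrm{Im}\,U$ of finite diameter that contains all Taylor intermediate points $U_{s_k} + \te\, \dl U_{s_k,s_{k+1}}$ for partitions of sufficiently small mesh. On $\mc N$, $DF$ is bounded by hypothesis, while $D^2F$ and $D^3F$ are bounded globally because $D^2F\in \C^1_b$. This localization step is the only departure from \cite{FH} and reduces our setting to the $\C^3_b$-case treated there.

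Second, for a partition $\mc P = \{0=s_0<\dots<s_N = t\}$, Taylor expand
\aa{
F(U_{s_{k+1}}) - F(U_{s_k}) = DF(U_{s_k})\,\dl U_{s_k,s_{k+1}} + \tfrac12 D^2F(U_{s_k})(\dl U_{s_k,s_{k+1}},\dl U_{s_k,s_{k+1}}) + r_k,
}
with $\|r_k\|_{E_3} \lt K\|\dl U_{s_k,s_{k+1}}\|_{E_2}^3 = \mc O(|s_{k+1}-s_k|^{3\al})$ by Step~1. Since $U = \Gm + \int_0^{\fdot}\hspace{-1mm} U'_r\, d\mbf X_r$ with $(U',U'')$ a controlled rough path, Theorem~4.10 in \cite{FH} gives the expansion $\dl U_{s,t} = \dl\Gm_{s,t} + U'_s\,\dl\bar X_{s,t} + U''_s\,\mbb X_{s,t} + R'_{s,t}$ with $\|R'_{s,t}\|_{E_2} = \mc O(|t-s|^{3\al})$. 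Substituting this into the Taylor expansion and invoking the geometric identity $\dl\bar X_{s,t}\ox \dl\bar X_{s,t} = 2\,\mathrm{Sym}(\mbb X_{s,t}) + [\mbf X]_{s,t}$, sort the contributions into four families: those involving $\dl\Gm_{s,t}$, those involving $\dl\bar X_{s,t}$ together with $\mbb X_{s,t}$, those involving $[\mbf X]_{s,t}$, and genuinely higher-order remainders.

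Third, pass to the limit $|\mc P|\to 0$. The $\dl\Gm$-sums converge to the Young integral $\int_0^t DF(U_s)\, d\Gm_s$, which is well-defined since $DF(U_\fdot)\in \C^\al$ and $\Gm\in \C^{2\al}$ with $3\al>1$. The $[\mbf X]$-sums converge to the Young integral $\tfrac12\int_0^t D^2F(U_s)(U'_s,U'_s)\,d[\mbf X]_s$, which makes sense because $[\mbf X]$ has finite $1/(2\al)$-variation and the integrand is $\al$-H\"older. The $\dl\bar X_{s,t}$- and $\mbb X_{s,t}$-terms, combining the first-order contribution $DF(U_{s_k})U''_{s_k}\mbb X_{s_k,s_{k+1}}$ with the symmetric second-order contribution $D^2F(U_{s_k})(U'_{s_k},U'_{s_k})\mathrm{Sym}(\mbb X_{s_k,s_{k+1}})$, assemble precisely into the compensated Riemann sums defining $\int_0^t DF(U_s)U'_s\, d\mbf X_s$, once one verifies by the Leibniz rule for controlled rough paths that the Gubinelli derivative of $DF(U_s)U'_s$ along $\bar X$ equals $D^2F(U_s)(U'_s,\fdot)U'_s + DF(U_s)U''_s$. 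The third-order remainders $\sum_k r_k$ and the cross-terms (such as $DF(U_{s_k})R'_{s_k,s_{k+1}}$, or products of $\dl\Gm$ with $\dl\bar X$) all contribute $\mc O(|\mc P|^{3\al-1})\to 0$ by the standard sewing-type estimates.

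The main obstacle is the algebraic bookkeeping in Step~3: the $\mbb X_{s,t}$-contributions produced by the first-order Taylor term (via $U''_s$) must combine with the symmetric part of the second-order Taylor term (via $\mathrm{Sym}(\mbb X_{s,t})$) to exactly reconstitute the compensator of the rough integral corresponding to the Gubinelli derivative of $DF(U_s)U'_s$. Once this identification is checked and one verifies that $\big(DF(U_\fdot)U'_\fdot, D^2F(U_\fdot)(U'_\fdot,\fdot)U'_\fdot + DF(U_\fdot)U''_\fdot\big)$ indeed defines an element of $\ms D^{2\al}_{\bar X}([0,T],\mc L(E_1,E_3))$ so that the rough integral exists, the remaining estimates are those of \cite{FH}, Proposition~7.6, performed on the neighborhood $\mc N$ constructed in Step~1.
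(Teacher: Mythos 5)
Your proposal is correct and takes essentially the same route as the paper, whose proof simply cites Proposition 7.6 of \cite{FH} and observes that its argument only uses boundedness of $DF$ on ${\rm Im}\; U$ together with the global bounds on $D^2F$ and $D^3F$ supplied by $D^2F\in \C^1_b$ — precisely the Taylor-expansion/sewing argument you reconstruct, including the correct identification of the Gubinelli derivative of $DF(U)U'$ and the bracket identity $\dl \bar X_{s,t}\ox\dl \bar X_{s,t}=2\,{\rm Sym}(\mbb X_{s,t})+\dl[\mbf X]_{s,t}$. (Your one loose phrase, that $DF$ is bounded on the neighborhood $\mc N$ ``by hypothesis'', is harmless: $DF$ is only ever evaluated on ${\rm Im}\; U$, and in any case the global bound on $D^2F$ extends the bound on $DF$ to any bounded neighborhood of ${\rm Im}\; U$.)
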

\begin{proof}
In Proposition 7.6 in \cite{FH}, formula \rf{ito-1} was proved under the assumption $F\in \C^3_b$. However, if we analyze the last expression
(containing the limit) in
the proof of  Proposition 7.6, we notice that the boundedness of $DF$ on ${\rm Im\;} U$,
together with the global boundedness of $D^2F$ and $D^3F,$ is sufficient for the limit to exist and be equal to the right-hand side of \rf{ito-1}.
Indeed, the convergence to zero of lower-order terms holds if $D^3F$ is globally bounded, and $DF$, $D^2F$ are bounded
on ${\rm Im\;} U$. Further, $DF(U_s)$ and $D^2F(U_s)$ need to be $\al$-H\"older continuous which is the case if $D^2F$ and $D^3F$ 
are globally bounded. 
\end{proof}

\begin{proof}[Proof of Theorem \ref{thm1}]
Consider the second term in \rf{ito-4}.  We will view $\vec{V(s,X_s)}{0}$ as a
bounded linear operator $L_p([\tau_0,\tau]\to\Rnu,\la) \to \mc E_p$, acting on elements 
$g\in L_p([\tau_0,\tau]\to\Rnu,\la)$ as follows: $\vec{V(s,X_s)}{0} g = \vec{V(s,X_s)g}{0}$. 
Introducing notation $\mr V(s,x) = \vec{V(s,x)}{0}$, we represent 
 the second term in \rf{ito-4} as
$\int_{\tau_0}^t \mr V(s,X_s) d\ind_{[s,T]}$, where the integral is understood as a Young integral. 

We apply Proposition \ref{rough-Ito-f} to $F(Z_t, M_t)$ for $t\in [\tau_0,\tau]$, 
where $M_t =\hat V(t,X_t)$ and $F: \mc L(\mc E_p) \x \mc E_p \to \mc E_p$, $F(A, x) = Ax$.
Remark that $F$ satisfies assumptions of Proposition \ref{rough-Ito-f}. Indeed, $D^2F$ is bounded by $1$, and $D^3F = 0$. Furthermore,
$F'(Z_t,M_t)(h_1,h_2) = h_1 M_t + Z_t h_2$ is bounded on
${\rm Im\;} (Z,M)$.
Taking into account representation \rf{ito-4} for $\hat V(t,X_t)$ and the RDE  \rf{eq-z-1}  for $Z_t$,
we obtain
\mmm{
\lb{f25}
Z_t \hat V(t,X_t)   =  Z_{\tau_0}\hat V(\tau_0,X_{\tau_0}) + \int_{\tau_0}^t Z_s  \mr V(s,X_s)  d\ind_{[s,T]}\\
+ \int_{\tau_0}^t   Z_s( \widehat{\pl_s V} + [\hat \sg_0, \hat V])(s,X_s) \, ds 
+ \int_{\tau_0}^t     Z_s [\hat \sg, \hat V](s,X_s) d \mbf  B_s,
} 
where  $[\hat \sg, \hat V] = \sum_{k=1}^d [\hat \sg_k, \hat V]\ox e_k$ and $\{e_k\}_{k=1}^d$ is the standard basis of $\Rnu^d$.
Since $Z_s$ takes values in $\mc L(\mc E_p)$,  the integrand $Z_s \mr V(s,X_s)$ becomes a bounded linear operator
$L_p([\tau_0,\tau]\to \Rnu,\la) \to \mc E_p$; therefore, the second integral is well-defined.

Applying $\Pi_n Y_\tau$ to the both sides of \rf{f25}, we obtain equation \rf{m-e}. 
Indeed, taking $\Pi_n Y_\tau$ under the $ds$-integral sign is straightforward. 
Further, let $t\in [\tau_0,\tau]$ and
${\mc P} = \{\tau_0 = s_1 < \ldots < s_N = t\}$ be a partition of $[\tau_0,t]$. 
We have
\mm{
\Pi_n Y_\tau \int_{\tau_0}^t Z_s[\hat\sg,\hat V](s,X_s) d\mbf B_s 
 = \Pi_n Y_\tau \lim_{|{\mc P}|\to 0} \sum_{k=1}^{N-1} 
 \big( Z_{s_k} [\hat\sg,\hat V](s_k,X_{s_k})\dl B_{s_k,s_{k+1}}      \\   
+ \big(Z_{s_k} [\hat\sg,\hat V](s_k, X_{s_k})\big)'\mbb B_{s_k,s_{k+1}}\big),
}
where  the derivative sign in the last term  means the Gubinelli derivative.
By the continuity of $\Pi_n Y_\tau$, we can take it under the limit sign.
 A similar argument holds for the Young integral in \rf{f25}.
\end{proof}

\section{Smoothness of the density under H\"ormander's condition}
 \lb{s4}
 \subsection{Malliavin differentiability of the solution}
 \lb{ss5}

Introduce the isonormal Gaussian process 
$W(\phi) = \sum_{k=1}^d \int_0^T \phi^k(t) dB^k_t$, where $\phi\in L_2([0,T];\Rnu^d)$. 
Let $\td H$ be a separable Hilbert or a finite-dimensional space. 
For an $\td H$-valued random variable $F$,
the $k$-th order Malliavin derivative operator $D^{(k)}:L_q(\Om;\td H) \to  L_q(\Om; L_2([0,T]^k; (\Rnu^d)^{\ox k}) \ox \td H)$
is defined as in \cite{nualart}, and its domain is denoted by $\mbb D^{k,q}(\td H)$. Define
$\mbb D^\infty(\td H) = \cap_{k\gt 1, q\gt 2}\, \mbb D^{k,q}(\td H)$.
\begin{pro}
\lb{lem1}
Assume (A1). Then, the solution $X_t$ to  \rf{sde1} is in $\mbb D^\infty(\mc H)$, and 
the solution $X(t)$ to \rf{sde} is in $\mbb D^\infty(\Rnu^n)$.
Moreover, for each $i=1,\ldots, d$ and for $r\lt t$, the $i$-th component $D^i_r X(t)$ satisfies the SDE
\aaa{
\lb{sde3}
D^i_r X(t) = \sg_i(r,X_r) + \int_r^t \pl_x b(s,X_s) D^i_r X_sds + \int_r^t  \pl_x\sg(s,X_s) D^i_r X_s dB_s,
}
while  for $r>t$, $D^i_rX(t) = 0$.
Furthermore, for $r\lt t$, it holds that 
\aa{
D^i_r X(t) = \Pi_n Y_t Z_r \hat \sg_i(r, X_r).
}
\end{pro}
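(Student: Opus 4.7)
The plan is to proceed in three stages: (i) establish Malliavin differentiability of $X_t$ in $\mc H$ by a standard Picard iteration argument adapted to the infinite-dimensional setting; (ii) derive the SDE~\rf{sde3} by applying the Malliavin derivative to~\rf{sde1} and projecting to the second component; (iii) prove the variation-of-constants representation by identifying both sides as solutions of the same linear SDE.

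For step (i), I would define the Picard iterates $X^{(0)}_t = (x_0,x_0)$ and $X^{(n+1)}_t = (x_0,x_0)+\int_0^t \hat b(s,X^{(n)}_s)\,ds + \int_0^t \hat\sg(s,X^{(n)}_s)\,dB_s$ in $\mc H$. Using (A1), together with the boundedness of the first-order Fr\'echet derivatives and the polynomial growth of higher-order derivatives, I would show inductively that $X^{(n)}_t \in \mbb D^{k,q}(\mc H)$ for every $k\gt 1$ and $q\gt 2$, with bounds on $\E\|D^{(k)}X^{(n)}_t\|^q$ that are uniform in $n$. The key inequality is \rf{f7777} applied in the $2$-smooth Banach space $\mc H$ (a Hilbert space, hence automatically $2$-smooth), combined with Gr\"onwall's lemma. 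Passing to the limit $n\to\infty$ in $\mc S_q([0,T],\mc H)$ and using the closedness of $D^{(k)}$ yields $X_t \in \mbb D^\infty(\mc H)$. Since $\Pi_n: \mc H \to \Rnu^n$ is bounded linear, $X(t) = \Pi_n X_t \in \mbb D^\infty(\Rnu^n)$ and $D^i_r X(t) = \Pi_n D^i_r X_t$.

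For step (ii), I would apply the operator $D^i_r$ to the $\mc H$-valued equation~\rf{sde1}, using the standard rule $D^i_r \int_0^t \hat\sg_j(s,X_s)\,dB^j_s = \hat\sg_i(r,X_r)\,\ind_{\{r\lt t\}} + \int_r^t \pl_x\hat\sg_j(s,X_s)\,D^i_r X_s\,dB^j_s$ (valid in the Hilbert space $\mc H$). For $r \gt t$ one obtains $D^i_r X_t = 0$; for $r \lt t$, projecting the resulting linear $\mc H$-valued SDE via $\Pi_n$ and observing $\Pi_n \hat\sg_i(r,X_r) = \sg_i(r,X_r)$, $\Pi_n \pl_x\hat b = \pl_x b$ etc., yields~\rf{sde3}.

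For step (iii), the main point is that the $\mc H$-valued linear SDE for $D^i_r X_s$ (in the variable $s\in[r,T]$) has coefficients identical to the SDE~\rf{eq-y} satisfied by the Jacobian $Y_t$, except that the initial condition at time $r$ is $\hat\sg_i(r,X_r)$ instead of the identity. By Proposition~\ref{pro90} (existence and uniqueness in $\mc S_q([0,T],\mc L(\mc H))$) and the fact that $Y_s Y_r^{-1} = Y_s Z_r$ (Proposition~\ref{inv34}), the process $s\mapsto Y_s Z_r \hat\sg_i(r,X_r)$ starts at $\hat\sg_i(r,X_r)$ when $s=r$ and solves the same linear equation as $D^i_r X_s$. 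Applying the It\^o product rule in $\mc H$ to $Y_s Z_r \hat\sg_i(r,X_r)$ (with $Z_r \hat\sg_i(r,X_r) \in \mc H$ fixed $\mc F_r$-measurable) and using~\rf{eq-y} verifies this directly. Uniqueness of solutions then gives $D^i_r X_s = Y_s Z_r \hat\sg_i(r,X_r)$ in $\mc H$; applying $\Pi_n$ yields the claim. The main obstacle I anticipate is the bookkeeping in step (i) to justify that Malliavin differentiability transfers cleanly through the lifting to $\mc H$ (in particular, that the factor $\ind_{[t,T]}(\cdot)$ appearing in $\hat\sg$ and $\hat b$ does not interfere with Malliavin calculus in $\mc H$); this is handled by working componentwise and exploiting the $H$-valued stochastic integration already developed in Subsection~\ref{st-int3}.
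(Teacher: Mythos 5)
Your proposal is correct and follows essentially the same route as the paper: differentiate the lifted $\mc H$-valued equation to get the linear SDE for $D^i_r X_t$, observe that $Y_t Z_r\hat\sg_i(r,X_r)$ solves the same equation with the same initial value at time $r$, conclude by uniqueness, and project with $\Pi_n$. The only cosmetic difference is that where you redo the Picard iteration with uniform $\mbb D^{k,q}$ bounds, the paper simply invokes Nualart's Theorems 2.2.1 and 2.2.2 (adapted to the Hilbert-space setting) and handles the polynomial growth of the higher-order derivatives through an induction on the SDEs satisfied by the higher Malliavin derivatives — the same estimates you describe.
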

\begin{proof}
First, we show that $X_t$ is in $\mbb D^{1,\infty}(\mc H) = \cap_{q\gt 2}\, \mbb D^{1,q}(\mc H)$.
This follows from Theorem 2.2.1 in \cite{nualart} (even though the proof is presented for finite-dimensional
equations, it remains valid for Hilbert-space-valued equations).
Moreover, for $r\lt t$, the SDE for the $i$-th component of the Malliavin derivative takes the form 
\aaa{
\lb{sde33}
D^i_r X_t = \hat\sg_i(r,X_r) + \int_r^t \pl_x \hat b(s,X_s) D^i_r X_sds 
+ \int_r^t  \pl_x\hat\sg(s,X_s) D^i_r X_s dB_s.
}
If we substitute $Y_tZ_r \hat\sg_i(r,X_r)$ instead of $D^i_r X_t$, the above equation is satisfied.
By uniqueness, we conclude that $D^i_r X_t  = Y_t Z_r \hat\sg_i(r,X_r)$. Hence,
$D^i _r X(t) = D^i _r \Pi_n X_t = \Pi_n D^i_r X_t  = \Pi_n Y_t Z_r \hat\sg_i(r,X_r)$. 
Applying the projection $\Pi_n$ of the both parts of \rf{sde33}, we obtain \rf{sde3}.

It remains to show that
$X_t$ is in $\mbb D^\infty(\mc H)$. The latter fact immediately implies, by the chain rule, 
that $X(t)$ is in  $\mbb D^\infty(\mc \Rnu^n)$ since $X(t) = \Pi_n X_t$.
The proof is the same as of Theorem 2.2.2 in \cite{nualart}, but we would like to demonstrate that the statement is valid
for coefficients whose higher-order derivatives have polynomial growth in the second argument.

By induction on $k$, one can show that  the SDE for the 
$k$-th order Malliavin derivative $D_{r_1,\ldots, r_k}^{i_1,\ldots, i_k} X_t$
takes the form
\mmm{
\lb{eqdk}
D_{r_1,\ldots, r_k}^{i_1,\ldots, i_k} X_t =  \int_{r_1\vee\dots\vee r_k}^t  \hspace{-3mm} \pl_x \hat b(s,X_s) D_{r_1,\ldots, r_k}^{i_1,\ldots, i_k} X_s ds \\
+ \int_{r_1\vee\dots\vee r_k}^t  \hspace{-3mm} \pl_x\hat\sg(s,X_s) D_{r_1,\ldots, r_k}^{i_1,\ldots, i_k} X_s dB_s
+\mc Q,
}
where $\mc Q$ represents the terms (not containing $D_{r_1,\ldots, r_k}^{i_1,\ldots, i_k} X_t$) 
whose moments of any order $q\gt 2$ are finite.
More specifically, $\mc Q$ involves products of the derivatives
of $\hat\sg_k$ and $\hat b$, evaluated at $(s,X_s)$,
(whose moments are finite by (A1) and Lemma \ref{lem11})
and the Malliavin derivatives of $X_t$ of orders smaller than $k$,
whose moments of all orders are finite by the induction hypothesis.  
Note that for $k=1$, \rf{eqdk} is the same as \rf{sde33}. Applying Lemma 2.2.2 from \cite{nualart}
to \rf{eqdk}, we obtain that $D_{r_1,\ldots, r_k}^{i_1,\ldots, i_k} X_t$ is in $\mbb D^{1,\infty}(\mc H)$
and that $D_{r_1,\ldots, r_k, r_{k+1}}^{i_1,\ldots, i_k, i_{k+1}} X_t$ satisfies an SDE of the form  \rf{eqdk}.
\end{proof}

 \subsection{Lie brackets and H\"ormander's condition}
 Define the space $\mc {\bar E} = D([0,T], \Rnu^n) \oplus \Rnu^n$, where $D([0,T], \Rnu^n)$ is the space of c\`adl\`ag functions,
and let for each $x\in  D([0,T], \Rnu^n)$, $x_t = \vec{x^t}{x(t)}$. Recall the definition of the vertical derivative (cf. \cite{cont}, p. 130, see also \cite{DUP}).
\begin{df}
\lb{vert}
A non-anticipative map $V: [0,T]\x \mc {\bar E} \to \Rnu^l$ is called vertically differentiable at $(t,x_t)$
if the map $\dl \mto V(t,x_t + \dl h_i)$ is differentiable at $\dl = 0$
for each direction  $h_i =\vec{\ind_{[t,T]} e_i}{e_i}$, where $\{e_i\}_{i=1}^n$
is the standard basis of $\Rnu^n$. 
The matrix $\pl_v V(t,x_t)$ with the columns $\pl_\dl V(t,x_t + \dl h_i)|_{\dl=0}$,  $i=1, \ldots, n$, is called then the vertical derivative of $V$ 
at point $(t,x_t)$.
\end{df}

We define the Lie bracket $[V_1,V_2]$ for two non-anticipative maps \cite{cont}
$V_1,V_2: [0,T]\x \mc {\bar E} \to \Rnu^n$, $(t,x) \mto V_i(t,x_t)$, $i=1,2$, as follows:
\aa{
[V_1,V_2](t,x_t) = (\pl_v V_2 V_1 - \pl_v V_1 V_2)(t,x_t),
}
where $\pl_v V_i$ denotes the vertical derivative of $V_i$ in the sense of Definition \ref{vert}.
Note that if $V_1$ and $V_2$ are Fr\'echet differentiable maps $[0,T] \x \mc E_p \to \Rnu^n$, and 
$\hat V_1$, $\hat V_2$ are their $\mc E_p$-lifts, then
\begingroup
\setlength{\belowdisplayskip}{4pt}
\setlength{\abovedisplayskip}{0pt} 
\aa{
[V_1,V_2] = \Pi_n [\hat V_1,\hat V_2].
}
\endgroup
where $[\hat V_1,\hat V_2](t,x_t) = (\pl_x \hat V_2 \hat V_1- \pl_x \hat V_1 \hat V_2)(t,x_t)$ is the classical Lie bracket in $\mc E_p$.

\begin{rem}
\rm
Remark that in  \rf{sde1},
the coefficients become infinite-dimensional only through  the multiplication by $\ind_{[t,T]}(\fdot)$,
regardless of the choice of topology in the space of the lift. Thus, the nature of equation \rf{sde1}
suggests  that the Lie brackets should be defined in terms of differentiation with respect to the present.
\end{rem}

Below, we state property (H) which, by analogy with the state-dependent case, 
we call H\"ormander's condition. However, the correct non-degeneracy condition, as\-sum\-ption (A5) following below,
is somewhat stronger than (H) and implies (H).
Namely, we state H\"ormander's condition at point $(t,x_0)\in [0,T]\x \Rnu^n$ as follows:
\bi
\item[\bf (H)] For all $x \in \C([0,T],\Rnu^n)$ such that $x(0) = x_0$, the vector space spanned by the maps
\aa{
\sg_1,\ldots, \sg_d, \; [\sg_i,\sg_j], \; 1\lt i,j \lt d, \;  [\sg_i, [\sg_j,\sg_k]], \; 1\lt i,j,k \lt d, \ldots,
}
evaluated at $(t,x_t)$, is $\Rnu^n$. 
\ei
\begin{rem}
\rm Remark this is the stronger version of H\"ormander's condition that excludes the map $\sg_0
= b - \frac12 \sum_{k=1}^d \pl_x \sg_k\hat\sg_k$.
\end{rem}
\subsection{Smoothness of the density}
\lb{smooth}
Introduce the sets of maps $[0,T] \x \mc {\bar E}  \to \Rnu^n$ (where $ \mc {\bar E} = D([0,T],\Rnu^n) \oplus \Rnu^n$)
\aa{
\Sg_0 =  \{\sg_1,\ldots, \sg_d\}, \qquad \Sg_j =  \{[\sg_k, V], k=1,\ldots, d, \, V \in \Sg_{j-1} \}.
}
Remark that H\"ormander's condition (H) at point $(t,x_0)$ implies that for each $x\in \C([0,T],\Rnu^n)$, $x(0) = x_0$,
there exists a number $\mc N(t,x)$, depending on $t$ and $x$, such that the vector space spanned by 
$\cup_{j=1}^{\mc N(t,x)} \Sg_j(t, x_t)$  coincides with $\Rnu^n$. 
Furthermore, condition (H) at point $(t,x_0)$ is equivalent to the fact that 
for each $x\in \C([0,T],\Rnu^n)$, $x(0) = x_0$, there exists a number $\mc N(t,x)$ such that
\aaa{
\lb{Gm}
\inf_{|z|=1} \sum_{j=1}^{\mc N(t,x)} \sum_{V\in \Sg_j} (z,V(t,x_t))^2 > 0.
}
However, to obtain the smoothness of the density, we need a stronger
non-degeneracy assumption than  condition (H),
or  the equivalent condition \rf{Gm}.
Namely, if  we want to prove the smoothness of the density at time $\tau\in (0,T]$
for the solution to \rf{sde} with the initial condition
$x_0\in\Rnu^n$,  we assume the following (cf. Assumptions A.3, A.4 in \cite{GH}):
\bi[topsep=2pt]
\item[\bf (A5)]
\bi
\item[(i)] 
There exists a number $\mc N(\tau)>0$ and a measurable function $\te_\tau: \mc E\to (0,\infty)$ such that
for all $x\in \C([0,T],\Rnu^n)$ satisfying $x(0) = x_0$,
 \begingroup
\setlength{\belowdisplayskip}{0pt}
\setlength{\abovedisplayskip}{1pt} 
\aa{
\inf_{|z|=1}\sum_{j=1}^{\mc N(\tau)} \sum_{V\in \Sg_j} (z,V(\tau, x_\tau))^2 \gt \te_\tau(x_\tau).
}
\endgroup
\item[(ii)] If  $X(\fdot)$ is the solution to  \rf{sde} with $X(0) = x_0$, then
the inverse moments of $\te_\tau(X_\tau)$ satisfy the following condition:
There exists a constant $C>0$ and a function $\Phi: [0,T]\x \Rnu^n \to \Rnu$ such that 
for all $q\gt1$,
\aa{
\E[\te^{-q}_\tau(X_\tau)] \lt C \Phi^q(\tau, x_0).
}
\ei
\ei

Remark that (A5)-(i) implies (H).

We now state our goal result on the smoothness of the density of $X(\tau)$:

\begin{thm}
\lb{thm2}
Fix $\tau\in (0,T]$ and assume (A1)--(A5).  Then, the Malliavin covariance matrix of the solution
 $X(\fdot)$ to equation \rf{sde} at time $\tau$  satisfies the estimate
 \aaa{
 \lb{gm-}
\PP\{\inf_{|z|=1}(\gm_\tau z,z)\lt \eps\} \lt C(x_0,\tau,q)\, \eps^q,
 }
 for all $q\gt 1$ and for a constant $C$ depending on $x_0,\tau,q$. Moreover,
 $X(\tau)$ admits a smooth density with respect to Lebesgue measure on $\Rnu^n$.
\end{thm}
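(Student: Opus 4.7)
The plan is to follow the Malliavin--Norris scheme for H\"or\-man\-der's theorem, adapted to our path-dependent setting through equation \rf{m-e} and the rough-path version of Norris's lemma from \cite{HP2}. From Proposition \ref{lem1}, for $r\lt \tau$ we have $D^i_r X(\tau) = J_{\tau,r}\hat\sg_i(r,X_r)$, so the Malliavin covariance matrix satisfies
\[
(\gm_\tau z,z) \gt (\gm^0_\tau z,z) = \sum_{k=1}^d \int_{\tau_0}^\tau \bigl(z, J_{\tau,s}\hat\sg_k(s,X_s)\bigr)^2\,ds.
\]
Thus it suffices to bound $\PP\{\inf_{|z|=1}(\gm^0_\tau z,z)\lt \eps\}$, and the whole analysis can be restricted to $[\tau_0,\tau]$, where the rough-path machinery of Section \ref{me} is available.

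For each non-anticipative map $V$ arising from iterated Lie brackets of $\sg_1,\ldots,\sg_d$, we introduce the scalar controlled rough path $f^V_t = (z,J_{\tau,t}\hat V(t,X_t))$. By Theorem \ref{thm1}, $f^V$ satisfies a scalar equation driven by $\mbf B$ whose rough-integral component has integrand $(z,J_{\tau,s}[\hat\sg_k,\hat V](s,X_s))$ against $d\mbf B^k_s$, plus a drift of bounded $2\al$-variation coming from the $ds$-term and the Young integral against $d\ind_{[s,T]}$. Lemma \ref{lem780} and Propositions \ref{pro3}--\ref{pro5}, combined with (A1)--(A4), furnish finite moments of all orders for the relevant H\"older seminorms, Gubinelli derivatives, and remainders. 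Applying the rough-path Norris lemma \cite{HP2} on $[\tau_0,\tau]$, we deduce that if $(\gm^0_\tau z,z)\lt \eps$ (which, for $V=\sg_k$, controls the $L^2([\tau_0,\tau])$ norm of $f^{\sg_k}$), then the rough-integrand of $f^V$ must itself be uniformly small in $s$. Evaluating at $s=\tau$ and using $J_{\tau,\tau}[\hat\sg_k,\hat V](\tau,X_\tau) = [\sg_k,V](\tau,X_\tau)$, the smallness propagates from $V\in\Sg_0$ to every $V\in\Sg_1\cup\cdots\cup\Sg_{\mc N(\tau)}$ after at most $\mc N(\tau)$ iterations; quantitatively,
\[
\sum_{j=0}^{\mc N(\tau)}\sum_{V\in\Sg_j}\bigl(z,V(\tau,X_\tau)\bigr)^2 \lt \eps^{\eta}\,\Psi
\]
for some $\eta>0$ and a random variable $\Psi$ with finite moments of all orders.

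Combining this cascade with assumption (A5)(i), which bounds the left-hand side from below by $\te_\tau(X_\tau)$, and with the inverse-moment estimate (A5)(ii), a Markov inequality together with a covering argument on the unit sphere of $\Rnu^n$ yields \rf{gm-} for every $q\gt 1$. Since $X(\tau)\in\mbb D^\infty(\Rnu^n)$ by Proposition \ref{lem1}, the standard Malliavin criterion (\cite[Theorem 2.1.4 and Corollary 2.1.2]{nualart}) then delivers a smooth density for $X(\tau)$ with respect to Lebesgue measure on $\Rnu^n$.

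The main obstacle will be carrying out the iterative Norris step rigorously for equation \rf{m-e}. One must (i) absorb the Young integral against $\ind_{[s,T]}$ into a single drift of finite $2\al$-variation on $[\tau_0,\tau]$, which is permissible thanks to the $2\al$-H\"older continuity of $s\mto \ind_{[s,T]}$ guaranteed by Lemma \ref{lem12}; (ii) verify that, for every iterated Lie bracket appearing in the cascade, the corresponding integrand $J_{\tau,s}[\hat\sg_k,\hat V](s,X_s)$ is a genuine controlled rough path whose Gubinelli derivative and remainder have moments of all orders, uniformly in the finitely many brackets used; and (iii) track the polynomial growth of higher-order derivatives of the iterated brackets through assumptions (A1), (A3), (A4). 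A secondary technical point is that the Norris-type statement in \cite{HP2} is scalar, so projecting onto $z\in S^{n-1}$ is essential, and its constants must depend only on norms of quantities already controlled in $L_q$ for every $q$.
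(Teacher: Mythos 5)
Your proposal is correct and follows essentially the same route as the paper's proof: reduction to $\gm^0_\tau$ on $[\tau_0,\tau]$ via Proposition \ref{pr52}, the scalar processes $(z,J_{\tau,\cdot}\hat V(\cdot,X_\cdot))$ driven by equation \rf{m-e} with the weakened rough-path Norris lemma (the Young integral folded into the drift), an induction over the bracket levels $\Sg_j$ with all random constants having moments of every order, and finally (A5) together with Chebyshev's inequality and the standard Malliavin criterion. The only cosmetic difference is that the paper's estimates are pathwise with a single random factor $\mc R$ independent of $z$ (and an interpolation step from the $L_2$-norm to the sup-norm, Lemma \ref{lem76}), so the infimum over the unit sphere is taken directly and no covering argument is needed.
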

Note that  the statement of Theorem \ref{thm2} follows from the following proposition.
\begin{pro}
\lb{pr52}
Assume (A1)--(A4). Then, estimate \rf{gm-} is implied by estimate \rf{gm0}
fulfilled for all $q\gt 1$ and for a constant $C$ depending on $x_0,\tau,q$:
\aaa{
\lb{gm0}
\PP\{\inf_{|z|=1}(\gm^0_\tau z,z)\lt \eps\} \lt C(x_0,\tau,q)\, \eps^q,
 }
 where 
 \begingroup
\setlength{\belowdisplayskip}{0pt}
\setlength{\abovedisplayskip}{2pt} 
  \aa{
 \gm^0_\tau = \int_{\tau_0}^\tau J_{\tau,s} (\hat\sg \hat\sg^*) (s, X_s) J_{\tau,s}^*\, ds, \qquad  
 J_{\tau,s} = \Pi_n Y_\tau Z_s.
 }
  \endgroup
\end{pro}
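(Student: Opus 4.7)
The plan is to reduce this to the inequality $(\gamma_\tau z, z) \geq (\gamma^0_\tau z, z)$ for all $z \in \mathbb{R}^n$, which was already flagged earlier in the introduction. Once that pointwise bound is in hand, the containment of events
\[
\{\inf_{|z|=1}(\gamma_\tau z, z) \leq \eps\} \subseteq \{\inf_{|z|=1}(\gamma^0_\tau z, z) \leq \eps\}
\]
is immediate, and monotonicity of $\PP$ propagates the tail estimate \rf{gm0} to \rf{gm-} with the same constant $C(x_0,\tau,q)$.

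To justify the pointwise bound, I would first invoke Proposition \ref{lem1}, whose hypotheses are covered by (A1)--(A4): it yields $X(\tau) \in \mathbb{D}^\infty(\Rnu^n)$ together with the explicit representation
\[
D^i_r X(\tau) = \Pi_n Y_\tau Z_r \hat\sg_i(r,X_r) = J_{\tau,r}\,\hat\sg_i(r,X_r), \qquad r \in [0,\tau],
\]
and $D^i_r X(\tau) = 0$ for $r > \tau$. Therefore the Malliavin covariance matrix admits the representation
\[
\gamma_\tau = \sum_{i=1}^d \int_0^\tau D^i_r X(\tau)\, (D^i_r X(\tau))^\tp \, dr
= \int_0^\tau J_{\tau,s}(\hat\sg\hat\sg^*)(s,X_s) J_{\tau,s}^* \, ds,
\]
so for any $z \in \Rnu^n$, $(\gamma_\tau z, z) = \sum_{k=1}^d \int_0^\tau (z, J_{\tau,s}\hat\sg_k(s,X_s))^2 ds$.

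Since $\tau_0 \in [0,\tau)$ by (A2) and each summand of the integrand is pointwise non-negative, restricting the interval of integration from $[0,\tau]$ to $[\tau_0,\tau]$ only diminishes the value:
\[
(\gamma_\tau z, z) \;\geq\; \sum_{k=1}^d \int_{\tau_0}^\tau (z, J_{\tau,s}\hat\sg_k(s,X_s))^2 \, ds = (\gamma^0_\tau z, z).
\]
Taking the infimum over $|z|=1$ on both sides and applying the event inclusion and monotonicity of $\PP$ yields \rf{gm-} directly from \rf{gm0}. There is no real obstacle here: the whole content of the proposition is the observation that the representation of $D_r X(\tau)$ provided by Proposition \ref{lem1} transfers non-negativity of the reduced integrand to non-negativity of the full integrand, making the reduction to the interval $[\tau_0,\tau]$ lossless as far as the lower tail of the quadratic form is concerned. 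The genuine work of the paper, which does not belong to this proof, is to establish \rf{gm0} itself via the rough-path Norris lemma applied to equation \rf{m-e}.
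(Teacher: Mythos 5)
Your proof is correct and follows exactly the paper's argument: invoke Proposition \ref{lem1} to write $D^k_s X(\tau) = J_{\tau,s}\hat\sg_k(s,X_s)$, observe that $\gm_\tau$ equals $\gm^0_\tau$ plus a nonnegative contribution from $[0,\tau_0]$, hence $(\gm^0_\tau z,z)\lt(\gm_\tau z,z)$, and conclude by the event inclusion and monotonicity of $\PP$. (The only cosmetic difference is that Proposition \ref{lem1} needs only (A1), not all of (A1)--(A4).)
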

\begin{rem}
\rm
Above,
$\hat \sg = \sum_{k=1}^d \hat \sg_k \ox e_k$, where $\{e_k\}_{k=1}^d$ is
 the standard basis of $\Rnu^d$, is regarded as an element of $\mc L(\Rnu^d,\mc E_p)$.
\end{rem}
\begin{proof}[Proof of Proposition \ref{pr52}]
By Proposition \ref{lem1}, $D^k_s X(\tau) = J_{\tau,s} \hat \sg_k(s,X_s)$ for $s\lt\tau$. Hence,
for the Malliavin covariance matrix of $X(\tau)$, it hold that
\aa{
\gm_\tau = \int_0^{\tau_0} J_{\tau,s} (\hat\sg \hat\sg^*) (s, X_s) J_{\tau,s}^*\, ds
+\gm^0_\tau,
}
Then, for all $z\in\Rnu^n$, $(\gm^0_\tau z,z) \lt (\gm_\tau z,z)$ a.s.
Therefore, \vspace{2mm}\\
 \phantom{\hspace{1.5cm}} $\PP\{\inf\limits_{|z|=1}(\gm_\tau z,z)\lt \eps\} \lt \PP\{\inf\limits_{|z|=1}(\gm^0_\tau z,z)\lt \eps\} \lt C(x_0,\tau,q)\, \eps^q$.
\end{proof}

We now proceed with the proof of \rf{gm0} and restrict our analysis to $[\tau_0,\tau]$.

Let $\mbf B = (B, \mbb B) \in \ms C_g^\al([0,T],\Rnu^d)$, as before, be a Stratonovich enhanced Brownian motion. 
It is known that the sample paths of $B$ are almost surely $\te$-H\"older rough for  every $\te>\frac12$ (Proposition 6.11 in \cite{FH}).
Let $L_\te(B)$ denote the modulus of $\te$-H\"older roughness of the Brownian motion $B$ (Definition 6.7 in \cite{FH}).

As before, $p\in (1,\frac32)$ and $\al = \frac1{2p}$. 
We fix $\te\in (\frac12,2\al)$ and define the quantity
\mm{
\mc R = 2+ |x_0| +  L_\te(B)^{-1} +\|Y_\tau\|_{\mc L(\mc E_p)} + \|(B,\mbb B)\|_\al + 
\|X\|_{\al, [0,T]} + \|Z\|_{\al, [0,T]} \\
+ \|R^X\!\|_{2\al} + \|R^{Z}\!\|_{2\al},
}
where $X$ and $Z$ are $\al$-H\"older continuous versions of the solutions to \rf{sde1} and \rf{inverse},
 respectively, and $x_0=X(0)$. 
Furthermore, $R^X$ and  $R^Z$ are naturally defined just on $[\tau_0,\tau]$;
 $\|X\|_{\al,  [0,T]}$ and $\|Z\|_{\al, [0,T]}$ are understood in the sense of Remark \ref{rem340}
 with respect to the norms of $\mc E_p$ and $\mc L(\mc E_p)$, respectively. 
Above, it is convenient to consider $\|X\|_\al$ and $\|Z\|_\al$ over  the entire interval $[0,T]$.

We have the following lemma about finite moments of $\mc R$.
\begin{lem}
Let (A1)--(A3) hold. Then, for all $q\gt 2$, $\E[\mc R^q] <\infty$.
\end{lem}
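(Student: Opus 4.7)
The plan is to show that each of the summands in the definition of $\mc R$ has a finite $q$-th moment for every $q\gt 2$, since then, by the elementary inequality $(a_1+\dots+a_N)^q \lt N^{q-1}\sum_i a_i^q$ (or equivalently Minkowski's inequality), we obtain $\E[\mc R^q] < \infty$. The first two summands $2+|x_0|$ are deterministic, so nothing has to be verified for them.

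I would then address the rough-path quantities associated to $B$ directly. For $\|(B,\mbb B)\|_\al$ with $\al\in(\tfrac13,\tfrac12)$, the finiteness of all moments is the classical fact (see, e.g., \cite{FH}, Chapter~3) that Brownian motion admits a geometric $\al$-H\"older enhancement whose rough-path norm lies in $L^q(\Om)$ for every $q\gt 1$, by Kolmogorov's criterion for rough paths applied to $B_t$ and $\mbb B^{\text{Str}}_{s,t}$. For $L_\te(B)^{-1}$ with $\te\in(\tfrac12,2\al)\sub(\tfrac12,1)$, I would invoke the corresponding fact on negative moments of the modulus of H\"older roughness of Brownian motion (see \cite{FH}, Chapter~6, together with the result stating that $\PP\{L_\te(B)\lt\eps\}$ decays faster than any polynomial as $\eps\to 0$), which yields $\E[L_\te(B)^{-q}]<\infty$ for every $q\gt 1$.

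Next, I would handle the quantities associated to the SDE. The term $\E\|Y_\tau\|^q_{\mc L(\mc E_p)}$ is finite by Proposition~\ref{pro90-}, which states $Y\in \mc S_q([0,T],\mc L(\mc E_p))$ for all $q\gt 2$. The terms $\E\|X\|_{\al,[0,T]}^q$ and $\E\|Z\|_{\al,[0,T]}^q$ are finite by Lemma~\ref{lem780}, which established exactly these moment bounds under (A1). Finally, the terms $\E\|R^X\|_{2\al}^q$ and $\E\|R^Z\|_{2\al}^q$ are finite by Propositions~\ref{pro3} and~\ref{pro5} respectively: these propositions produced the controlled-rough-path decompositions of $X$ and $Z$ with respect to $B$ on $[\tau_0,\tau]$ and directly asserted $\E\|R^X\|_{2\al}^q+\E\|R^Z\|_{2\al}^q<\infty$ for every $q\gt 1$. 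Combining all the above, $\E[\mc R^q]<\infty$ for every $q\gt 2$, which is the claim.

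The only step that is not essentially a citation of a previous result in the paper is the control of $L_\te(B)^{-1}$; this is the main technical input from outside the current framework, but it is a well-documented fact about Brownian rough paths that ensures the subsequent application of Norris's lemma for rough paths in the forthcoming proof of \rf{gm0}.
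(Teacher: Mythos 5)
Your proposal is correct and follows essentially the same route as the paper: bound each summand of $\mc R$ separately, citing Lemma \ref{lem780} for $\|X\|_\al$, $\|Z\|_\al$, Propositions \ref{pro3} and \ref{pro5} for $R^X$, $R^Z$, Proposition \ref{pro90-} for $Y_\tau$, and a standard rough-path fact for the negative moments of $L_\te(B)$ (the paper cites Lemma 3 in \cite{HP2} for this last point, which is the same fact you invoke via \cite{FH}). Your explicit treatment of $\|(B,\mbb B)\|_\al$ via Kolmogorov's criterion is a harmless addition that the paper leaves implicit.
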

\begin{proof}
By Lemma \ref{lem780},  for all $q\gt 2$, $\E[\|X\|_{\al,[0,T]}^q + \|Z\|_{\al,[0,T]}^q] < \infty$, and by 
Propositions \ref{pro3} and \ref{pro5}, $\E[\|R^X\!\|^q_{2\al} + \|R^{Z}\!\|^q_{2\al}] < \infty$. 
Further, Proposition \ref{pro90-} implies that $\E\|Y_\tau\|^q_{\mc L(\mc E_p)} < \infty$. 
Finally, by Lemma 3 in \cite{HP2}, for all $q\gt 2$, $\E[L_\te(B)^{-q}] < \infty$.
\end{proof}


The proof of Theorem \ref{thm2}
 is based on Lemmas \ref{lem76}, \ref{lem5}, \ref{lem72}, and  \ref{lem7}.
  \begin{lem}
\lb{lem76}
Let (A1)--(A4) hold. Then, there exist constants $p_0,q_0>0$ such that
for all $z\in\Rnu^n$, $|z| =1$, and  for all initial conditions $x_0\in\Rnu^n$, 
\aaa{
\lb{i0}
\|\big(z,J_{\tau,_{\displaystyle\fdot}} 
\hat \sg(\fdot,X_{_{\displaystyle\fdot}})\big)\|_{\infty, [\tau_0,\tau]} \lt  \mc R^{q_0} (z,\gm^0_\tau z)^{p_0}.
}
\end{lem}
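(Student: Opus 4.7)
The idea is to realize the right-hand side as the squared $L_2$-norm (in $s\in[\tau_0,\tau]$) of the function appearing on the left, and then invoke a standard interpolation inequality between $L_\infty$ and $L_2$ for H\"older-continuous functions. Writing $f_k(s):=(z,J_{\tau,s}\hat\sg_k(s,X_s))$, we have
\[
(z,\gm^0_\tau z)=\sum_{k=1}^d \|f_k\|_{L_2([\tau_0,\tau])}^2\quad\text{and}\quad \|(z,J_{\tau,\fdot}\hat\sg(\fdot,X_{\fdot}))\|_{\infty,[\tau_0,\tau]}\lt \sqrt d\,\max_k \|f_k\|_{\infty,[\tau_0,\tau]},
\]
so it is enough to bound each $\|f_k\|_\infty$ by $\mc R^{q_0}(z,\gm^0_\tau z)^{p_0}$.

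The first step is to show $\|f_k\|_{\al,[\tau_0,\tau]}\lt \mc R^{q_1}$ for some $q_1>0$. I split $f_k(t)-f_k(s)$ into the $J$-increment $(z,(J_{\tau,t}-J_{\tau,s})\hat\sg_k(t,X_t))$ and the $\hat\sg$-increment $(z,J_{\tau,s}(\hat\sg_k(t,X_t)-\hat\sg_k(s,X_s)))$. The former is controlled by $\|Y_\tau\|_{\mc L(\mc E_p)}\|Z_t-Z_s\|_{\mc L(\mc E_p)}\|\hat\sg_k(t,X_t)\|_{\mc E_p}\lt \mc R^{q_4}|t-s|^\al$, using $\|Z\|_\al\lt\mc R$, $\|Y_\tau\|\lt\mc R$, and the linear-growth bound $\|\hat\sg_k(t,X_t)\|_{\mc E_p}\lt \mc R^{q_5}$ coming from (A1) and $\|X\|_\infty\lt\mc R(1+T^\al)$. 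For the $\hat\sg$-increment I further write $\hat\sg_k(t,X_t)-\hat\sg_k(s,X_s)=[\hat\sg_k(t,X_t)-\hat\sg_k(s,X_t)]+[\hat\sg_k(s,X_t)-\hat\sg_k(s,X_s)]$; the $X$-increment is dominated by $\|\pl_x\hat\sg_k\|_\infty\|X_t-X_s\|_{\mc E_p}\lt C\mc R|t-s|^\al$ via (A1), while the $s$-increment is handled using (A3)--(A4) for the $t$-differentiability (with polynomial growth in $x$) of $\sg_k(\fdot,x)$, together with Lemma \ref{lem12}, which provides the $\frac1{p}$-H\"older regularity of $s\mapsto\ind_{[s,T]}$ on $[\tau_0,\tau]$---a regularity only available thanks to (A2). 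Since $\al=\frac1{2p}$ yields $\frac1{p}=2\al\gt\al$, the $\hat\sg$-increment is even better than $\al$-H\"older.

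Once the H\"older estimate is in hand, a standard interpolation argument closes the proof. For any $f\in\C^\al([\tau_0,\tau])$ with $\|f\|_\al\lt K$, considering the subinterval of length $\min\!\big((\|f\|_\infty/(2K))^{1/\al},\,\tau-\tau_0\big)$ around the maximum---on which $|f|\gt\|f\|_\infty/2$---yields
\[
\|f\|_\infty\lt C_\al\Big(K^{1/(2\al+1)}\|f\|_{L_2}^{2\al/(2\al+1)}+(\tau-\tau_0)^{-1/2}\|f\|_{L_2}\Big).
\]
Applied with $K=\mc R^{q_1}$ and $\|f_k\|_{L_2}\lt(z,\gm^0_\tau z)^{1/2}$, and setting $p_0:=\al/(2\al+1)\in(0,\tfrac12)$, this delivers the required bound on the region $\{(z,\gm^0_\tau z)\lt 1\}$, where the first term dominates. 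On the complementary region the crude estimate $\|f_k\|_\infty\lt|f_k(\tau_0)|+(\tau-\tau_0)^\al\|f_k\|_\al\lt\mc R^{q_2}$ combined with $1\lt(z,\gm^0_\tau z)^{p_0}$ finishes the argument, taking $q_0:=\max(q_1/(2\al+1),q_2)$ up to additive constants. The main obstacle is the H\"older regularity of $s\mapsto\hat\sg_k(s,X_s)$: the indicator $\ind_{[s,T]}$ is only $\frac1{p}$-H\"older in $\mc E_p$, and the choice $p\in(1,\frac32)$ combined with (A2) is precisely what makes $\frac1{p}=2\al\gt\al$ on $[\tau_0,\tau]$; beyond that step the interpolation is routine.
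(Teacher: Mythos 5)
Your proposal is correct and takes essentially the same route as the paper: you identify $(z,\gm^0_\tau z)$ with the squared $L_2([\tau_0,\tau],\Rnu^d)$-norm of $\mc Z_s=(z,J_{\tau,s}\hat\sg(s,X_s))$, bound $\|\mc Z\|_{\al,[\tau_0,\tau]}$ by a power of $\mc R$ using (A1)--(A4) together with the $\frac1p$-H\"older continuity of $t\mapsto\ind_{[t,T]}$ on $[\tau_0,\tau]$ (Lemma \ref{lem12}, enabled by (A2)), and then apply the $L_\infty$--$L_2$ interpolation inequality for H\"older functions. The paper invokes this inequality directly (Lemma A.3 of the Hairer--Pillai reference) in its product form $\|f\|_\infty\lt C\|f\|_{L_2}^{2\al(2\al+1)^{-1}}\|f\|_{\C^\al}^{(2\al+1)^{-1}}$, which makes your split into the regions $(z,\gm^0_\tau z)\lt 1$ and $(z,\gm^0_\tau z)\gt 1$ unnecessary, but this is only a cosmetic difference.
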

\begin{rem}
\rm
$\big(z,J_{\tau,_{\displaystyle\fdot}} 
\hat \sg(\fdot, X_{_{\displaystyle\fdot}})\big)$ denotes the vector whose $k$-th component ($k=1,\ldots, d$)
is $\big(z,J_{\tau,_{\displaystyle\fdot}} \hat \sg_{k}(\fdot, X_{_{\displaystyle\fdot}})\big)$.
\end{rem}
\begin{rem}
\lb{rem5.5}
\rm
Everywhere below,
$\|X\|_\al$, $\|X\|_\infty$, $\|X(\fdot)\|_\infty$, $\|Z\|_\al$, and $\|Z\|_\infty$ are understood
over $[0,T]$ (in particular, $\|X(\fdot)\|_\infty = \sup_{t\in[0,T]}|X(t)|$). We skip the lower index $[0,T]$
in these quantities for simplicity of notation. 
\end{rem}
\begin{proof}[Proof of Lemma \ref{lem76}]
Let $K_i$, $i=1,2,\ldots$, be positive constants.
We have
\aa{
(\gm^0_\tau z, z) = \sum_{k=1}^d \int_{\tau_0}^\tau (z,J_{\tau,s} \hat \sg_{k}(s,X_s))^2 \, ds =  
\| (z,J_{\tau,s} \hat \sg(s,X_s))\|^2_{L_2([\tau_0,\tau],\Rnu^d)}.
}
We use the following interpolation inequality (Lemma A.3 in \cite{HP1}):
\mmm{
\lb{interp}
\|f\|_\infty \lt 2\sqrt{d}\,\max\{ (\tau-\tau_0)^{-\frac12}\|f\|_{L_2([\tau_0,\tau],\Rnu^d)}, 
\|f\|_{L_2([\tau_0,\tau],\Rnu^d)}^{2\al(2\al+1)^{-1}}\|f\|^{(2\al+1)^{-1}}_\al\}\\
\lt 2\sqrt{d}\,((\tau-\tau_0)^{-\al(2\al+1)^{-1}} \hspace{-2mm} + 1)  \|f\|_{L_2([\tau_0,\tau],\Rnu^d)}^{2\al(2\al+1)^{-1}}\|f\|^{(2\al+1)^{-1}}_{\C^\al}
}
which holds for  an $\al$-H\"older continuous function $f: [\tau_0,\tau]\to \Rnu^d$.

For $s\in [\tau_0,\tau]$, define $\mc Z_s = (z,J_{\tau,s} \hat \sg(s,X_s))$.
Since $\|f\|_{\C^\al} = \|f\|_\infty + \|f\|_\al \lt K_1 \big(|f(\tau_0)| + \|f\|_\al\big)$, from \rf{interp} we obtain 
\aa{
\|\mc Z\|_{\infty}\lt K_2  (\gm^0_\tau z,z)^{\al(2\al+1)^{-1}}\hspace{-1mm}\big(\|Y_\tau\|_{\mc L(\mc E_p)}
(1+|x_0| + \|X\|_{\al} + \|Z\|_{\al}) +\|\mc Z\|_{\al}\big)^{(2\al+1)^{-1}} \hspace{-1mm}.
}
Since, by assumption, $\sg(t,x)$  and $\pl_t \sg(t,x)$
have linear and polynomial growth, respectively,  w.r.t. $x\in \mc E$,  there exists a number $q\in \Nnu$ such that
\mm{
\|\mc Z\|_{\al} \lt K_3 \|Y_\tau\|_{\mc L(\mc E_{p})} \big(\|Z\|_\al(1+\|X(\fdot)\|_{\infty}) 
+ \|Z\|_\infty(1+\|X(\fdot)\|^q_{\infty} + \|X\|_\al)\big)\\
\lt K_4\|Y_\tau\|_{\mc L(\mc E_{p})}  (1+\|Z\|_\al)(1+|x_0| + \|X\|_\al)^q.
}
Remark that $\|\mc Z\|_{\infty}$ and $\|\mc Z\|_{\al}$ are computed over $[\tau_0,\tau]$.
The last two estimates imply \rf{i0}. 

We have proved that \rf{i0} holds up to a multiplication by a constant. However, 
since $\mc R>2$, this constant can be 
bounded from above by $\mc R^a$ for some $a>0$. Thus, without loss of generality, the constant can be set equal to $1$.
\end{proof}
Lemma \ref{lem5} below is a weaker
 version of Norris' lemma for rough paths (Theorem 3.1 in \cite{HP2}) since
 we are only able to estimate $\|A\|_\infty$ (but not $\|C\|_\infty$) via  a power of $\|I\|_\infty$. 
This happens because our expression for $I_t$ contains a Young integral. 
If we attempt to repeat the arguments 
of Theorem 3.1 in \cite{HP2}, we arrive at the point when we can estimate the sum
of the last two terms in \rf{int4} via a power of $\|I\|_\infty$, but not each term separately. 
\begin{lem}
\lb{lem5}
Let $\te \in(\frac12, 2\al)$.
Assume $(A,A')\in \ms D_B^{2\al}([\tau_0,\tau], \Rnu^{m})$,
$C, D\in \C^\al([\tau_0,\tau],\Rnu^m)$, and $\ffi \in \C^{2\al}([\tau_0,\tau],\Rnu^m)$.
We define
\aaa{
\lb{int4}
I_t = I_{\tau_0} + \int_{\tau_0}^t A_s d\mbf B_s + \int_{\tau_0}^t C_s ds + \int_{\tau_0}^t D_s d\ffi_s, \qquad t\in [\tau_0,\tau],
}
where the last integral is understood as a Young integral,
and let
\aa{
\bar{\mc  R} = |I_{\tau_0}|+ L_\te(B)^{-1} + \|(B,\mbb B)\|_\al + \|(A,A')\|_{B,\al}
 +\|C\|_{\C^\al}  +\|D\|_{\C^\al} + \|\ffi\|_{2\al}. 
 }
Then, there exist $r,q>0$  such that
\aaa{
\lb{ba}
\|A\|_{\infty} \lt \bar K \bar{\mc R}^q \|I\|^r_{\infty},
}
where 
 $\bar K$ is a constant that only depends on $\al$, $\te$, $\tau_0$, and $\tau$.
\end{lem}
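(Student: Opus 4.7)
The plan is to adapt the rough-path Norris lemma (Theorem~3.1 in \cite{HP2}) by absorbing the extra Young integral $\int D\,d\ffi$ into the remainder of the increment expansion of $I$. Because $D\in \C^\al$ and $\ffi \in \C^{2\al}$ with $3\al>1$, the Young term carries the same $2\al$-H\"older regularity as the quadratic piece $A'\mbb B$ coming from the rough integral, so it can be handled in exactly the same way. The price paid is that only the estimate on $\|A\|_\infty$ survives: once $\|A\|_\infty$ is known to be small, the residual $\int C\,ds + \int D\,d\ffi$ matches $I$ up to small error, but $C$ and $D$ cannot be disentangled without additional regularity.

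First, I would decompose the increment as
\ee{
\dl I_{s,t} \;=\; A_s\,\dl B_{s,t} \;+\; R_{s,t}, \qquad R_{s,t} \;=\; A'_s\,\mbb B_{s,t} \;+\; R^I_{s,t} \;+\; \int_s^t C_r\,dr \;+\; \int_s^t D_r\,d\ffi_r,
}
where $R^I_{s,t}$ is the usual $3\al$-H\"older remainder of the rough integral $\int_{\tau_0}^{\cdot} A_r\,d\mbf B_r$. Standard rough-path and Young estimates — in particular $\int_s^t D_r\,d\ffi_r = D_s\,\dl\ffi_{s,t} + O(|t-s|^{3\al})$ with the leading piece controlled by $\|D\|_\infty\|\ffi\|_{2\al}|t-s|^{2\al}$ — yield
\ee{
\|R_{s,t}\| \;\lt\; K\,\bar{\mc R}^{q_1}\,|t-s|^{2\al}
}
for a universal exponent $q_1$ and a constant $K$ depending only on $\al$ and $\tau-\tau_0$.

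Second, I would invoke $\te$-H\"older roughness of $B$ (recall $\te<2\al$). Set $\eta := \|A\|_\infty$ and pick $s_0\in [\tau_0,\tau]$ at which $\|A_{s_0}\|$ is at least $\eta/2$; then choose unit vectors $v,w$ of the appropriate dimensions so that $v^\tp(A_{s_0}w)$ is comparable to $\eta$ up to a dimensional constant. By the very definition of $L_\te(B)$, for every sufficiently small $\eps>0$ there exists $t_0\in[\tau_0,\tau]$ with $|t_0-s_0|\lt\eps$ and $|w\cdot \dl B_{s_0,t_0}| \gt c\,L_\te(B)\,\eps^\te$ for an absolute $c>0$. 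Combined with the decomposition,
\ee{
|v\cdot \dl I_{s_0,t_0}| \;\gt\; c\,\eta\,L_\te(B)\,\eps^\te \;-\; K\,\bar{\mc R}^{q_1}\,\eps^{2\al}.
}

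Third, since $2\al>\te$, I balance the two terms by picking $\eps$ so that the first is, say, twice the second; this gives $\eps^{2\al-\te}\asymp \eta\,L_\te(B)/\bar{\mc R}^{q_1}$ and therefore
\ee{
2\|I\|_\infty \;\gt\; c\,\eta\,L_\te(B)\,\eps^\te \;\asymp\; \bar{\mc R}^{-q_2}\bigl(\eta\,L_\te(B)\bigr)^{\frac{2\al}{2\al-\te}},
}
which upon rearrangement yields \rf{ba} with exponent $r=(2\al-\te)/(2\al)\in(0,1)$ and a suitable $q$.

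The main obstacle will be verifying admissibility of the chosen $\eps$: when $\eta$ is very small compared with $\bar{\mc R}$ the balanced $\eps$ may exceed $\tau-\tau_0$, but in that regime \rf{ba} is trivially true after enlarging $\bar K$. A second delicate point is that the roughness inequality must apply uniformly in $s_0\in[\tau_0,\tau]$, including near the endpoints; this is handled by the standard convention that $L_\te(B)$ is defined so that $t_0$ may lie on either side of $s_0$. Once both points are dealt with, the above balancing argument produces the claimed estimate and, as noted, the symmetric bounds on $\|C\|_\infty$ and $\|D\|_\infty$ are genuinely unavailable because of the indistinguishability of $dr$ and $d\ffi_r$ at the level of the remainder.
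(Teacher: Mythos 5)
Your main line is sound, and it is in essence an inline re-proof of the tool the paper uses as a black box: the paper first shows, via the Sewing lemma, that the Young term $\int D\,d\ffi$ only contributes at order $(t-s)^{2\al}$, so that the Gubinelli derivative of $I$ is $I'_t=A_t$, and then simply quotes Proposition 1 of \cite{HP2} (the quantitative $\te$-H\"older roughness estimate), finishing by bounding $\|R^I\|_{2\al}$ and $\|I\|_\infty$ by powers of $\bar{\mc R}$. Your decomposition $\dl I_{s,t}=A_s\dl B_{s,t}+R_{s,t}$ with $\|R_{s,t}\|\lt K\bar{\mc R}^{q_1}|t-s|^{2\al}$, the choice of the directions $v,w$ (where $w$ should be the unit vector along $A_{s_0}^{\tp}v$ so that the roughness of $B$ in direction $w$ really lower-bounds $|v\cdot A_{s_0}\dl B_{s_0,t_0}|$), and the $\eps$-balancing reproduce the proof of that proposition and give the same exponent $r=1-\te/(2\al)$; both arguments share the key point that only $\|A\|_\infty$ survives because $dr$ and $d\ffi_r$ are invisible at the level of the Gubinelli derivative.

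The genuine defect is your treatment of the admissibility of $\eps$. From the balancing relation $\eps^{2\al-\te}\asymp \eta L_\te(B)/\bar{\mc R}^{q_1}$ (with $\eta=\|A\|_\infty$), the balanced $\eps$ is too large precisely when $\eta L_\te(B)$ is large compared with $\bar{\mc R}^{q_1}$ — not, as you write, when $\eta$ is small — and in that regime \rf{ba} is \emph{not} ``trivially true after enlarging $\bar K$'': nothing prevents $\|I\|_\infty$ from being tiny while $\eta$ is of order a power of $\bar{\mc R}$, so the inequality cannot be salvaged by a constant alone. The repair is standard but must be carried out: cap $\eps$ at $(\tau-\tau_0)/2$; since this capped $\eps$ is below the balanced one, the remainder is still dominated by half of the roughness term, which yields the linear bound $\eta\lt \bar K' L_\te(B)^{-1}\|I\|_{\infty}$, and the exponent $r<1$ is then restored by writing $\|I\|_\infty=\|I\|_\infty^{r}\,\|I\|_\infty^{1-r}$ and invoking an a priori bound of the form $\|I\|_\infty\lt \bar K''\bar{\mc R}^{3}$ — exactly the estimate the paper derives in the last display of its proof, and which your write-up omits. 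This two-regime structure is what the extra additive term $\|I\|_\infty^{\te/(2\al)}$ in Proposition 1 of \cite{HP2} encodes; once you add the capped-$\eps$ case and the bound on $\|I\|_\infty$, your argument is complete.
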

\begin{proof}
In what follows, $\bar K_i$, $i=1,2, \ldots$, are positive constants.

First, we note that the Gubinelli derivative of the last term in \rf{int4} is zero. 
Indeed, it follows from Lemma 4.2 in \cite{FH} (Sewing lemma) and, in particular,
from estimate (4.11),  that there exists
a constant $\bar K_1>0$, depending only on $\al$, such that
\aaa{
\lb{e-y}
\Big|\int_s^t D_r d\ffi_r - D_s \dl \ffi_{s,t}\Big| \lt \bar K_1 \|\Xi\|_{3\al} (t-s)^{3\al} \lt 
\bar K_1\|D\|_\al \|\ffi\|_{2\al}(t-s)^{3\al},
}
where $\Xi_{s,t} = D_s \dl \ffi_{s,t}$.
Therefore, $I'_t = A_t$, and hence, by Proposition 1 in \cite{HP2}, 
\aa{
|A_t| = |I'_t| \lt \bar K_2 L_\te^{-1}(B) \|I\|_{_\infty}^{(1-\frac{\te}{2\al})}\big(\|R^I\|^{\frac{\te}{2\al}}_{2\al} 
+ \|I\|_{_\infty}^{\frac{\te}{2\al}}\big).
}
It is straightforward to obtain an estimate on $\|R^I\|_{2\al}$ since 
Theorem 4.10 in \cite{FH} and estimate \rf{e-y} imply that
for all $s,t\in [\tau_0,\tau]$, $s<t$,
\mm{
|R^I_{s,t}| \lt \bar K_3\big[\big(\|B\|_\al\|R^A\|_{2\al} + \|\mbb B\|_{2\al}\|A'\|_{\C^\al}\big)  (t-s)^{3\al} 
+ \|A'\|_\infty\|\mbb B\|_{2\al}(t-s)^{2\al}\\
+\|C\|_\infty(t-s) + \|D\|_\al \|\ffi\|_{2\al} (t-s)^{3\al} + \|D\|_\infty \|\ffi\|_{2\al} (t-s)^{2\al}\big].
}
Furthermore, it holds that
\mm{
\|I\|_\infty \lt  |I_{\tau_0}| + (\tau-\tau_0)^\al \|I\|_\al  \lt \bar K_4 \big( |I_{\tau_0}| + \|A\|_\infty \|B\|_\al + \|R^I\|_{2\al}\big) \\ \lt 
\bar K_4\big( |I_{\tau_0}|+ \bar K_4(|A(0)|  + \|A'\|_\infty\|B\|_\al +  \|R^A\|_{2\al})\|B\|_\al +  \|R^I\|_{2\al}\big).
}
Thus, both quantities $\|R^I\|_{2\al}$ and $\|I\|_\infty$ can be bounded by $\bar K_5 \mc{\bar R}^3$ which implies \rf{ba}.
\end{proof}
\begin{lem}
\lb{lem72}
Let (A1)--(A4) hold. Then, for each $j\in \Nnu$,
there exist constants $p_j,q_j>0$ such that
for all $z\in\Rnu^n$, $|z| =1$, for all initial conditions $x_0\in\Rnu^n$, and for all $V\in\Sg_j$,
\aaa{
\lb{ineq}
\|(z,J_{\tau,_{\displaystyle\fdot}} \hat V(\fdot,X_{_{\displaystyle\fdot}}))\|_{\infty, [\tau_0,\tau]} \lt 
\mc R^{q_j} (z,\gm^0_\tau z)^{p_j}.
}
\end{lem}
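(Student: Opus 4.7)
The approach is induction on $j$; the base case $j=0$ is exactly Lemma \ref{lem76}. For the inductive step, I assume \rf{ineq} holds for every $W\in\Sg_{j-1}$ and take $V=[\sg_k,W]\in\Sg_j$. The plan is to apply Theorem \ref{thm1} to $W$, having first checked that $W$, as an iterated Lie bracket of the $\sg_i$'s, satisfies the regularity hypotheses (i)--(iii) of Theorem \ref{thm1}: this follows from (A1), (A3), (A4) because the bracket $[V_1,V_2]=\pl_xV_2V_1-\pl_xV_1V_2$ and its $x$-derivatives up to third order are polynomial combinations of derivatives of $V_1,V_2$, so infinite Fr\'echet differentiability, $t$-continuity on $[\tau_0,\tau]\x\mc E$, and at-most polynomial growth in $x\in\mc E$ are preserved under iteration, albeit with larger polynomial growth exponents. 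Theorem \ref{thm1} then yields equation \rf{m-e} for $W$.

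Taking the $\Rnu^n$-inner product with $z$ in \rf{m-e} applied to $W$ produces a scalar equation of precisely the form \rf{int4}, with $I_t=(z,J_{\tau,t}\hat W(t,X_t))$, Brownian integrand $A_t=(z,J_{\tau,t}[\hat\sg,\hat W](t,X_t))$ whose $k$-th coordinate equals the quantity $(z,J_{\tau,t}\hat V(t,X_t))$ we wish to estimate, drift $C_t=(z,J_{\tau,t}(\widehat{\pl_sW}+[\hat\sg_0,\hat W])(t,X_t))$, Young integrand $D_t$ coming from $(z,J_{\tau,t}\vec{W(t,X_t)}{0})$, and driver $\ffi_t=\ind_{[t,T]}$. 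I then invoke Lemma \ref{lem5} to get $\|A\|_{\infty,[\tau_0,\tau]}\lt\bar K\bar{\mc R}^q\|I\|_{\infty,[\tau_0,\tau]}^r$. The induction hypothesis applied to $W$ gives $\|I\|_{\infty,[\tau_0,\tau]}\lt\mc R^{q_{j-1}}(z,\gm^0_\tau z)^{p_{j-1}}$, which delivers \rf{ineq} for $V$ as soon as $\bar{\mc R}$ is controlled by a power of $\mc R$.

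The technical core is the bound $\bar{\mc R}\lt\mc R^{\text{(const)}}$. The Gubinelli derivative $A'$ is computed from the controlled-rough-path representations of $J_{\tau,t}=\Pi_nY_\tau Z_t$ (Proposition \ref{pro5}, with Gubinelli derivative $-\Pi_nY_\tau Z_t\pl_x\hat\sg(t,X_t)$) and of $[\hat\sg,\hat W](t,X_t)$ (by the same argument as in Proposition \ref{pro675}, since $X_t$ is controlled by $B_t$) via the product rule for controlled rough paths. The resulting $\|A\|_\al$, $\|A'\|_{\mc C^\al}$, $\|R^A\|_{2\al}$, as well as $\|C\|_{\mc C^\al}$ and $\|D\|_{\mc C^\al}$, become polynomial expressions in $\|Y_\tau\|_{\mc L(\mc E_p)}$, $\|X\|_\al$, $\|Z\|_\al$, $\|R^X\|_{2\al}$, $\|R^Z\|_{2\al}$, $|x_0|$ and $\|X(\fdot)\|_\infty\lt|x_0|+T^\al\|X\|_\al$, each of which is a component of $\mc R$. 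Finally, the $2\al$-H\"older norm of $\ffi_t=\ind_{[t,T]}$ on $[\tau_0,\tau]$ equals $1$ by Lemma \ref{lem12} combined with (A2) and the choice $\tfrac{1}{p}=2\al$.

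The principal obstacle is formal rather than conceptual: Lemma \ref{lem5} is stated with $D_t$ and $\ffi_t$ both $\Rnu^m$-valued, whereas in our application $\ffi_t=\ind_{[t,T]}\in L_p$ and $D_t$ is $L_p^*$-valued, so that the pairing is a genuine scalar Young integral. The proof of Lemma \ref{lem5} extends to this Banach-space setting without modification, because the sewing estimate \rf{e-y} only uses the product $\|D\|_\al\|\ffi\|_{2\al}(t-s)^{3\al}$ together with $3\al>1$; one simply reinterprets $\|D\|_\al$ and $\|\ffi\|_{2\al}$ as the corresponding operator and $L_p$ norms. A secondary concern is tracking the exponents $p_j,q_j$ through the recursion ($p_j=rp_{j-1}$, $q_j=rq_{j-1}+q+O(1)$), but they remain finite at every step by construction.
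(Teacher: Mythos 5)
Your proposal is correct and follows the paper's own proof essentially step for step: induction with Lemma \ref{lem76} as the base case, equation \rf{m-e} (Theorem \ref{thm1}) applied to the parent bracket so that the Brownian integrand carries the next-level brackets, the weak Norris Lemma \ref{lem5}, a bound of $\bar{\mc R}$ by a power of $\mc R$ via the controlled-rough-path structure of $J_{\tau,\cdot}$, $Z$, $X$ and the polynomial growth of iterated brackets, and finally the induction hypothesis. Your additional remarks (checking hypotheses (i)--(iii) of Theorem \ref{thm1} for iterated brackets, and extending Lemma \ref{lem5} to an operator-valued $D$ paired with the $L_p$-valued driver $\ind_{[\cdot,T]}$) address points the paper uses silently and do not alter the argument.
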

\begin{proof}
Recall that that norms and seminorms associated with $X$ and $Z$ are understood over $[0,T]$ (Remark \ref{rem5.5}),
while the rest of the analysis is done on $[\tau_0,\tau]$.

We prove the lemma by induction on $j$.  From Lemma \ref{lem76} it follows that \rf{ineq} is true for $j=0$. Suppose
\rf{ineq} is true for some $j$, and show that it is true for $j+1$. 

By \rf{m-e}, for each unit vector $z\in\Rnu^n$ and  for $t \in [\tau_0, \tau]$, 
\mmm{
\lb{zv}
 (z,J_{\tau,t}\hat V(t,X_t)) =
(z,J_{\tau,\tau_0} \hat V(\tau_0,X_{\tau_0}))   + \int_{\tau_0}^t (z,J_{\tau,s}\mr V(s,X_s)d\ind_{[s,T]})\\
+ \int_{\tau_0}^t (z,J_{\tau,s}\big( \widehat{\pl_s V} + [\hat \sg_{0}, \hat V]\big)(s,X_s)) \, ds 
+ \int_{\tau_0}^t   (z,J_{\tau,s} [\hat \sg, \hat V](s,X_s) ) d \mbf  B_s,
}
where $\mr V(s,X_s) = \vec{V(s,X_s)}{0}$.
For $s\in [\tau_0,\tau]$, we define 
\aa{
&\mc Z_s = (z,J_{\tau,s}[\hat\sg,\hat V](s,X_s)), \quad \mc {\bar Z}_s = (z,J_{\tau,s}
\big(\widehat{\pl_s V} + [\hat\sg_{0},\hat V]\big)(s,X_s)),
 \\
&\mc{\td Z}_s = (z, J_{\tau,s}\mr V(s,X_s)),
}
where $[\hat\sg,\hat V] = \sum_{k=1}^d [\hat \sg_k, \hat V]\ox e_k$. 
 Lemma \ref{lem5}, applied to equation \rf{zv}, implies that
\aaa{
\lb{rhs}
\|\mc Z\|_{\infty,[\tau_0,\tau]} 
\lt   \bar K \bar{\mc R}^q  
\|(z,J_{\tau,_{\displaystyle\fdot}}\hat V(\fdot,X_{_{\displaystyle\fdot}}))\|^r_{\infty, [\tau_0,\tau]},
}
where $\bar {\mc R}$ is the quantity defined in Lemma \ref{lem5}.
 Note that the expression for $\bar {\mc R}$ contains the terms $\|(\mc Z, \mc Z')\|_{B,\al}$,
$\| \mc {\bar Z}\|_{\C^\al}$, $\|\mc{\td Z}_s \|_{\C^\al}$, and $(z,J_{\tau,\tau_0} \hat V(\tau_0,X_{\tau_0}))$,  
 so we have to obtain bounds on these terms by powers of $\mc R$.
Define $\mc V = [\hat\sg,\hat V]$. We start by obtaining a bound on 
 $\|(\mc Z, \mc Z')\|_{B,\al}$.
 For $s,t \in [\tau_0,\tau]$, $s<t$, we have
 \mm{
 \dl \mc Z_{s,t} = (z,\Pi_n Y_\tau \dl Z_{s,t} \mc V(s,X_s)) 
 + (z,\Pi_n Y_\tau Z_s \dl \mc V(\fdot,X_{\fdot})_{s,t})\\
 = (z,\Pi_n Y_\tau Z'_s \dl B_{s,t}  \mc V(s,X_s))  + (z,\Pi_n Y_\tau Z_s \pl_x \mc V(s,X_s)X'_s \dl B_{s,t})
 + R^{\mc Z}_{s,t},
 }
 where the first two terms represent $\mc Z'_s \dl B_{s,t}$.  The latter expression allows to get a bound on $\|R^{\mc Z}\|_{2\al}$.
 Indeed, since  $\dl Z_{s,t} = Z'_s \dl B_{s,t} + R^Z_{s,t}$ and $\dl \mc V(\fdot,X_{\fdot})_{s,t} = \pl_x\mc V(s,X_s) X'_s \dl B_{s,t} + R^{\mc V}_{s,t}$,
 we obtain
 \aaa{
\lb{rzst9}
 R^{\mc Z}_{s,t} = \dl \mc Z_{s,t}  - \mc Z'_s \dl B_{s,t}
 = (z,\Pi_n Y_\tau R^Z_{s,t} \mc V(s,X_s)) +  (z,\Pi_n Y_\tau Z_s R^{\mc V}_{s,t}).
 }
Next, since $\dl X_{s,t} = X'_s \dl B_{s,t} + R^X_{s,t}$ and, furthermore, 
 $\mc V$, $\pl_s \mc V$, $\pl_x\mc V$, and $\pl_x^2\mc V$ have at most polynomial growth with respect to the second argument, there exists 
  a number $q_1\in \Nnu$ such that
\mm{
\hspace{-3mm} \|R^{\mc V}_{s,t}\|_{\mc L(\Rnu^d,\mc E_p)} \lt 
\|\dl \mc V(\fdot,X_{\fdot})_{s,t} - \pl_x\mc V(s,X_s)\dl X_{s,t}\|_{\mc L(\Rnu^d,\mc E_p)} +
\|\pl_x\mc V(s,X_s) R^X_{s,t}\|_{\mc L(\Rnu^d,\mc E_p)}\\
\lt
\|\dl \mc V(\fdot,X_t)_{s,t}\|_{\mc L(\Rnu^d,\mc E_p)}  +\|\dl \mc V(s,X_{\fdot})_{s,t} 
-  \pl_x\mc V(s,X_s)\dl X_{s,t}\|_{\mc L(\Rnu^d,\mc E_p)}\\
+\|\pl_x\mc V(s,X_s) R^X_{s,t}\|_{\mc L(\Rnu^d,\mc E_p)}  \lt K_1 
(1+\|X(\fdot)\|^{q_1}_\infty)\big((t-s)^{2\al}+\|\dl X_{s,t}\|^2_{\mc E_p} +\|R^X_{s,t}\|_{\mc E_p}\big).
}
The previous inequality, together with \rf{rzst9}, implies that there exists a number $q_2\in\Nnu$ such that
\aa{
\|R^{\mc Z}\|_{2\al}  \lt K_2\|Y_\tau\| (1+\|X(\fdot)\|^{q_2}_{\infty})
\big(\|R^Z\|_{2\al}  + \|Z\|_\infty \big(1+ \|X\|^2_\al+\|R^X\|_{2\al} \big)\big),
}
where $\|Y_\tau\| =\|Y_\tau\|_{\mc L(\mc E_p)}$.
We further notice that $\|X(\fdot)\|_\infty \lt |x_0| + T^\al \|X(\fdot)\|_\al \lt |x_0| + T^\al \|X\|_\al$.
Thus, we conclude that $\|R^{\mc Z}\|_{2\al}$ is bounded by a power of $\mc R$.

To obtain an estimate on $\|\mc Z'\|_\al$,  we note that
$Z'_t = -Z_t \pl_x\hat\sg(t,X_t)$ and $X'_t = \hat\sg(t,X_t)$. Therefore,
$\mc Z'_t = -(z,\Pi_n Y_\tau Z_t [\sg, \mc V](t,X_t))$. Let $\mc {\bar V}_t = [\hat\sg, \mc V](t,X_t)$. 
We have
\aa{
\|\mc Z'\|_{\al} \lt  K_3 \|Y_\tau\| \big( \|Z\|_\al \|\mc {\bar V}\|_{\infty} + \|Z\|_\infty 
\|\mc {\bar V}\|_{\al} \big)
\lt K_4 \|Y_\tau\|(1+\|Z\|_\al)(1+ \|\mc {\bar V}\|_{\al}) \lt \mc R^{q_3}
}
for some $q_3\in\Nnu$. Remark that since $V\in \Sg_j$, $\mc {\bar V}=[\hat\sg,[\hat\sg,\hat V]]$ is associated with the maps from $\Sg_{j+2}$. 
The necessary estimates have been already obtained for maps of this type; they are the same as for the map $\mc V=[\hat\sg,\hat V]$. 
This allows us to conclude that $\|\mc {\bar V}\|_{\al}$ is bounded by a power of $\mc R$.
Also,  notice that 
\aa{
|\mc Z_{\tau_0}| \lt \|Y_\tau\| \|Z_{\tau_0}\| \big(1+\|X\|_{\infty}^{q_4}\big)\lt K_5 \|Y_\tau\| \big(1+|x_0|^{q_4} + \|X\|_{\al}^{q_4} +\|Z\|_\al\big).
}
Furthermore,
\aa{
\|\mc {\bar Z}\|_{\C^\al} + \|\mc {\td Z}\|_{\C^\al}  \lt K_6 \|Y_\tau\| (1+\|Z\|_\al)(1+ |x_0| + \|X\|_{\al})^{q_5}, \quad q_5 \in\Nnu.
}
Finally, to obtain a bound on $|(z,J_{\tau,\tau_0} \hat V(\tau_0,X_{\tau_0}))|$, we notice that
this term is bounded by $\|Y_\tau\|(1+\|Z\|_\al)(1+ |x_0|+ \|X\|_{\al})^{q_6}$ (for some $q_6\in\Nnu$)
multiplied by a constant; therefore, it is bounded by a power of $\mc R$. Thus, \rf{rhs} implies that there exists $q_7\in\Nnu$ such that
\aa{
\|\mc Z\|_{\infty, [\tau_0,\tau]} 
\lt    {\mc R}^{q_7}  
\|(z,J_{\tau,_{\displaystyle\fdot}}\hat V(\fdot,X_{_{\displaystyle\fdot}}))\|^r_{\infty, [\tau_0,\tau]},
}By the induction hypothesis,
\aa{
\|(z,J_{\tau,_{\displaystyle\fdot}}\hat V(\fdot,X_{_{\displaystyle\fdot}}))\|_{\infty, [\tau_0,\tau]}
 \lt  \mc R^{q_j} (z,\gm^0_\tau z)^{p_j}.
}
This implies \rf{ineq}.
\end{proof}
\begin{lem}
\lb{lem7}
Assume (A1)--(A5). Then, there exist positive constants $l$ and $r$ such that
 \begingroup
\setlength{\belowdisplayskip}{0pt}
\setlength{\abovedisplayskip}{0pt} 
\aa{
 \inf_{|z|=1}(z,\gm^0_\tau z) \gt  \te^r_\tau(X_\tau) \mc R^{-l}.
}
 \endgroup
\end{lem}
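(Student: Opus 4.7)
The plan is to combine the pointwise upper bounds from Lemma~\ref{lem72} (whose base case $j=0$ is Lemma~\ref{lem76}) with the H\"ormander non-degeneracy in (A5)-(i), by evaluating each controlled quantity $(z, J_{\tau,\fdot}\hat V(\fdot,X_{\fdot}))$ at the right endpoint $t = \tau$. The key observation is that by Proposition~\ref{inv34}, $Y_\tau Z_\tau = I$ on $\mc H$ a.s., whence $J_{\tau,\tau} = \Pi_n Y_\tau Z_\tau = \Pi_n$, and therefore $J_{\tau,\tau}\hat V(\tau,X_\tau) = V(\tau,X_\tau)$ for any map $V$ whose $\mc E_p$-lift $\hat V$ is given by \rf{ep-lift}.

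Concretely, I would first apply Lemma~\ref{lem72} to every $V$ in the finite family $\bigcup_j \Sg_j$ appearing in (A5)-(i) to obtain, for any unit $z \in \Rnu^n$,
\[
(z, V(\tau,X_\tau))^2 \lt \|(z, J_{\tau,\fdot}\hat V(\fdot,X_{\fdot}))\|_{\infty,[\tau_0,\tau]}^2 \lt \mc R^{2q_j}\,(z,\gm^0_\tau z)^{2p_j}.
\]
Summing these bounds over $V$ and $j$ and combining with the lower bound in (A5)-(i) yields a scalar inequality of the shape
\[
\te_\tau(X_\tau) \lt C\,\mc R^{2q_{\max}}\!\left[(z,\gm^0_\tau z)^{2p_{\min}} + (z,\gm^0_\tau z)^{2p_{\max}}\right]\!,
\]
where $p_{\min} = \min_j p_j$, $p_{\max} = \max_j p_j$, $q_{\max} = \max_j q_j$, and $C$ is a combinatorial constant depending on $\mc N(\tau)$ and on the cardinalities $|\Sg_j|$. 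Keeping both extremal powers on the right is a clean way to treat simultaneously the regimes $(z,\gm^0_\tau z) \lt 1$ and $(z,\gm^0_\tau z) \gt 1$, since in each regime one of the two terms dominates.

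Finally, solving this scalar inequality for $(z,\gm^0_\tau z)$ in both regimes produces a bound of the form $(z,\gm^0_\tau z) \gt c\,\te_\tau(X_\tau)^{1/(2p_{\min})}\mc R^{-q_{\max}/p_{\min}}$, with the positive constant $c$ absorbed into $\mc R^{-l}$ thanks to $\mc R \gt 2$. Taking the infimum over unit $z$ delivers the claim with $r = 1/(2p_{\min})$ and $l$ of order $q_{\max}/p_{\min}$.

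I expect no serious obstacle here: the heavy lifting (rough-path Norris via Lemma~\ref{lem5}, the inductive exploitation of equation \rf{m-e} in Lemma~\ref{lem72}, and the controlled-rough-path analysis of $X_t$ and $Z_t$) is already packaged in the earlier lemmas, and the only point requiring any care is the two-regime analysis needed to collapse the bounds into a single power of $(z,\gm^0_\tau z)$.
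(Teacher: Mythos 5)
Your proposal is correct and follows essentially the same route as the paper: evaluate $(z,J_{\tau,s}\hat V(s,X_s))$ at $s=\tau$ (where $J_{\tau,\tau}=\Pi_n$), invoke (A5)-(i), bound each term by Lemma \ref{lem72}, and rearrange to isolate $(z,\gm^0_\tau z)$. The only difference is cosmetic: the paper compresses the combination of the various exponents $p_j,q_j$ into ``for some $p_{\mc N},q_{\mc N}>0$'', whereas you spell out the two-regime bookkeeping explicitly, which is fine.
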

\begin{proof}
Since $(z,V(\tau,X_\tau)) = (z,J_{\tau,s}\hat V(s,X_s))\big|_{s=\tau}$  for all 
$z\in\Rnu^n$, by (A5)-(i), 
\aa{
\te_\tau(X_\tau) \lt \inf_{|z|=1} \sum_{j=1}^{\mc N(\tau)}  \sum_{V\in \Sg_j} 
\|(z,J_{\tau,_{\displaystyle\fdot}}\hat V(\fdot,X_{\displaystyle\fdot}))\|_{\infty, [\tau_0,\tau]}^2 \lt \mc R^{q_{\mc N}}
 \inf_{|z|=1}(z,\gm^0_\tau z)^{p_{\mc N}} 
}
for some $q_{\sss \mc N}, p_{\sss \mc N} > 0$. This  implies the statement.
\end{proof}
Now we are ready to prove Theorem \ref{thm2}.
\begin{proof}[Proof of Theorem \ref{thm2}]
By Proposition \ref{pr52}, to show \rf{gm-}, it suffices to obtain estimate \rf{gm0}.
By Lemma \ref{lem7} and Chebyshev's inequality, for all $z$ such that $|z|=1$,
\mm{
\PP(\inf_{|z|=1}(z,\gm^0_\tau z) \lt \eps) \lt \PP(\te^r_\tau(X_\tau) \mc R^{-l}\lt \eps) \\
\lt \eps^q \, \E[\te_\tau^{- k q r}(X_\tau)]^\frac1{k} \E[\mc R^{kql}]^\frac1{k} \lt 
 \eps^q\,  C  \Phi^{qr}(x_0,\tau)\,  \E[\mc R^{kql}]^\frac1{k},
}
where $k\in\Nnu$ is sufficiently large.
Therefore, $(\det \gm_\tau)^{-1} \in \cap_{q\gt 1} L^q$. 
Furthermore, by Proposition \ref{lem1}, $X(\tau)$ is in $\mbb D^\infty(\Rnu^n)$. Hence, $X(\tau)$ has a smooth density.
\end{proof}
\subsection{Examples of coefficients satisfying (A5)}

We give examples of the coefficients of the SDE \rf{sde}  satisfying (A5) in addition to  (A1)--(A4).

Suppose $d=n$. Let $\sg_i(\tau,x,y)$, where $x\in E_p$, $y\in\Rnu^n$, be of the form $A_i(\tau,\zeta(x_{\tau-}),\eta(y))$, where
$\zeta:  E_p \to\Rnu^k$   (for some $k\in\Nnu$) is a  bounded function and $x_{\tau-} = x\ind_{[0,\tau)}$.
Furthermore, we assume that $\eta: \Rnu^n \to \Rnu^l$ is infinitely differentiable and compactly supported. 
For example, the function $A_i(\tau,\int_0^T \eta(x(s\we \tau))ds)$ can be represented in the required form
if we replace $x(\tau)$ with $y$.
Imposing assumptions on $A_i$ and $\zeta$ in such a way that (A1)--(A4) are satisfied is straightforward.
In particular, $A_i(\tau,\fdot,\fdot)$ should be continuous together with the partial derivatives 
of all orders with respect to the third argument.
Note that since in H\"ormander's condition, we differentiate only with respect to $y$, $\zeta(x_{\tau-})$
can be regarded as a parameter, so we can think of $A_i(\tau,\zeta(x_{\tau-}),\eta(y))$ as of
$A_i(\tau, a,\eta(y))$,  where $a\in\Rnu^k$ denotes a parameter from a bounded set. 
Further note that for each $\tau$ and $a$, $A_i$'s can be regarded as classical vector fields $\Rnu^n \to\Rnu^n$,
$y\mto A_i(\tau, a,\eta(y))$. The fulfillment of H\"ormander's condition for these vector fields is just a classical assumption;
examples can be found in, e.g., \cite{nualart}. Thus, we assume that the classical H\"ormander condition is fulfilled 
for the maps $y\mto A_i(\tau, a,\eta(y))$
for $\tau\in (0,T]$
and for each $a$ from a compact set $\mc D$. 

\subsubsection*{Verification of (A5)-(i)}  
We need to show that the number $\mc N$ in \rf{Gm} can be chosen independently of 
$a$ and $y$. 
For convenience, we redefine the sets $\Sg_j$  as follows: 
$\Sg_0 =  \{A_1,\ldots, A_d\}$,  $\Sg_j =  \{[A_k, V], k=1,\ldots, d, \, V \in \Sg_{j-1} \}$,
where the differentiation is considered with respect to $y$.
Take $(\bar a,\bar y)\in \mc D\x \supp \eta$
and choose a number $N_{(\bar a, \bar y)}$ with the property that the vector fields from
$\cup_{j=0}^{N_{(\bar a,\bar y)}}\Sg_j$, evaluated at $(\bar a,\bar y)$, generate $\Rnu^n$.
One can choose $n$ vector fields $V_1$, \ldots, $V_n$ from $\cup_{j=0}^{N_{(\bar a,\bar y)}}\Sg_j$
in such a way that the determinant of the matrix whose columns are $V_i(\bar a, \bar y)$, $i=1, \ldots, n$, is non-zero.
By continuity, the determinant of the matrix with the columns  $V_i(a, y)$ remains non-zero in some
neighborhood $\mc U_{(\bar a,\bar y)}\sub \mc D \x \Rnu^n$. 
Finally, choose a finite subcover of $\mc D\x \supp \eta$ from the cover $\mc U_{(\bar a,\bar y)}$, $(\bar a,\bar y)\in \mc D\x \supp \eta$.
Let the numbers $N_1, \dots, N_M$ be associated with the neighborhoods of the finite subcover, and let $\mc N(\tau)$ be the maximal of 
these numbers. Then, the set $\big(\!\cup_{j=0}^{\mc N(\tau)}\Sg_j\big)(a,y)$
generates $\Rnu^n$ for all $(a,y)\in \mc D\x \supp \eta$.

\subsubsection*{Verification of (A5)-(ii)}
We write $\Gm(\tau,a,y)$
for  $\inf_{|z|=1}\sum_{j=1}^{\mc N(\tau)} \sum_{V\in \Sg_j} (z,V(\tau, a, \eta(y)))^2$.
Suppose  $V_i(a,y)$, $i=1, \ldots, n$,  are vectors from $\big(\cup_{j=0}^{\mc N(\tau)}\Sg_j\big)(a,y)$ that generate $\Rnu^n$,
and let $M_n(a,y)$ be the $n\x n$ matrix whose columns are $V_i(a,y)$.
Then,
 \begingroup
\setlength{\belowdisplayskip}{2pt}
\setlength{\abovedisplayskip}{2pt} 
\aa{
\Gm(\tau,a,y) \gt \inf_{|z|=1}\sum_{i=1}^{n}  (z,V_i(a,y))^2 =  \inf_{|z|=1}(M_n^\tp M_n(a,y) z,z) = \la_{min}(a,y) > 0,
}
\endgroup
where $\la_{min}(a,y)$ is the smallest eigenvalue of $M_n^\tp M_n(a,y)$. For each $(\bar a,\bar y)\in \mc D\x \supp \eta$,
consider the neighborhood $\mc U_{(\bar a,\bar y)}\sub \mc D \x \Rnu^n$ such that for all $(a,y)\in \mc U_{(\bar a,\bar y)}$,
$\la_{min}(a,y) > \frac12 \la_{min}(\bar a, \bar y)$. Finally, choose a finite subcover from the cover
$\mc U_{(\bar a,\bar y)}$, $(\bar a,\bar y)\in \mc D\x \supp \eta$. Let $\la_1, \ldots, \la_N$ be the minimal eigenvalues
associated with this finite subcover of $\mc D \x  \supp \eta$. Then, for all $(a,y)\in \mc D \x  \supp \eta$,
$\Gm(\tau,a,y)  > \frac12 \min\{\la_1,\ldots, \la_N\}$, which implies (A5)-(ii).

\subsubsection*{An example of the coefficients with $d\ne n$}
Let $d=2$, $n=3$. Assume the coefficients 
$\sg_i(\tau,x,y)$, $i=1,2$, take the form $A_i(\zeta(x_{\tau-}), y)$, where $\zeta: E_p\to [a_{min},a_{max}]$,  $a_{min}\gt 2$,
$y= (y_1,y_2,y_3) \in \Rnu^3$. As before, we substitute a parameter $a$ instead of $\zeta(x_{\tau-})$ since
computing the Lie brackets, we differentiate only with respect to $y$. We explicitly define $A_1$ and $A_2$
as follows:
\aa{
A_1 = \vect{1}{0}{0}, \quad A_2 = \vect{0}{a+\sin y_2}{y_1},
}
where $a\in  [a_{min},a_{max}]$.
Computing the Lie bracket $[A_1,A_2]$ gives
\aa{
[A_1,A_2] = - A_2'A_1 = \vect{0}{0}{-1}.
}
Note that the determinant of the matrix $M_3$ with the columns $A_1$, $A_2$, $[A_1,A_2]$ equals $-(a+\sin y_2)$, and it is non-zero since $a_{min}\gt 2$.
Therefore, $A_1$, $A_2$,  and $[A_1,A_2]$ always generate $\Rnu^3$.  Thus, $N(\tau) = 3$, and hence, (A5)-(i)
is satisfied.

Let us verify (A5)-(ii). Let $\Gm(\tau,a,y)$ be defined as above. We have
\aa{
\Gm(\tau,a,y)\gt \inf_{|z|=1}(M_3^\tp M_3(a,y) z,z) = \la_{min}(a,y),
}
where $\la_{min}(a,y)$ is the minimal eigenvalue of $M_3^\tp M_3(a,y)$.
An explicit computation shows that
\aa{
\la_{min}(a,y) = 1 \we \frac{2(a+\sin y_2)^2}{(a+\sin y_2)^2 + y_1^2 + 1 
+ \sqrt{((a+\sin y_2)^2 + y_1^2 + 1)^2 - 4(a+\sin y_2)^2}}. 
}
This implies that there exists a constant $k(q,a_{min},a_{max})>0$ such that
\aa{
\la_{min}^{-q}(a,y) \lt k(q,a_{min},a_{max})(1+y_1^{2q}).
}
Remark that under (A1), $\E|X(\tau)|^q\lt \bar k_q(1+|x_0|^q)$ for some constant $\bar k_q>0$. The above estimates
imply that (A5)-(ii) is fulfilled for $\te_\tau(\zeta(X_{\tau-}), X(\tau)) = \la_{min}(\zeta(X_{\tau-}),X(\tau))$. Remark that $\zeta$
should chosen in such a way that (A1)--(A4) are satisfied for $\sg_i(\tau,x,y)=A_i(\zeta(x_{\tau-}), y)$.

\subsection{A note on SDEs with discrete delays}

Assumptions (A1)--(A4), unfortunately, exclude SDEs with discrete delays. By the latter, we mean an SDE
whose coefficients $\sg_k$ (and $b$) take the form
\aa{
\sg_k(t,X^t,X(t)) = A_k(t, X(t-h_m), \ldots, X(t-h_1), X(t)),
}
where $0<h_1<h_2 < \dots < h_m < T$ and $A_k$ is a map $[0,T]\x \Rnu^{m+1}\to \Rnu^n$ with the same regularity properties 
as in the example of the coefficient $\sg_k$ (subsection \ref{examples}) depending on the path at a finite number of points. Furthermore,
$X(t-h_i) = x_0$ if $t\in [0,h_i]$, $i=1,\ldots, m$. However, our strategy is expected to work for above-des\-cribed  equations
as well. 

First, we describe the lift of a discrete-delay SDE into a finite-dimensional space. Introduce stochastic processes
$X^{h_i}(t)= X(t-h_i)$, $i=1, \ldots, m$, on $[0,T]$.
One can write an SDE for each $X^{h_i}(t)$ and transform the resulting equations by means
of the time change $t  \leftrightarrow t-h_i$. Furthermore, we define $B^{h_i}_t$ as follows:
$B^{h_i}_t = B_{t-h_i}$ on $[h_i,T]$ and $B^{h_i}_t  = 0$ on $[0,h_i]$.
Thus, the original equation is complemented by a system of $m$ equations. In the similar manner,
we write SDEs for each of the processes $X^{h_i+h_j}(t)= X(t-h_i-h_j)$,  $i,j=1,\ldots, m$, which occur in the arguments of 
the coefficients of the last $m$ equations. We continue this procedure for each resulting equation until 
we arrive at the trivial equation containing just $x_0$ on the right-hand side. 
We then exclude all the trivial equations of the above form.  The final system of equations represents a finite-dimensional 
lift of the original equation. Let $L$ be the number of equations in the final system and $N=Ln$.  
Remark that on the left-hand side, we have a process whose components and only those are arguments
 of the coefficients on the right-hand side. The general form
 of the process on the left-hand side is $X_t = (X(t),X^{h_1}(t), \ldots, X^{h_{L-1}}(t))$, where
 $X^{h_k}(t) = X(t-h_k)$, $k=1, \ldots, L-1$, and
 $h_k$ takes the form $\sum_{i=1}^m \sum_{\al_i \in \{0,1, \ldots, [\frac{T}{h_i}]\}} \al_i h_i$, i.e., 
 $h_k$ with $k={m+1}, \ldots, L-1$ is a sum of some of the original delays $h_1, \ldots, h_m$. The process $X_t$ is
 then  an $\Rnu^N$-valued lift of the solution $X(t)$. We write the $\Rnu^N$-valued equation for $X_t$ as follows: 
 \begingroup
\setlength{\belowdisplayskip}{3pt}
\setlength{\abovedisplayskip}{3pt}
   \aaa{
   \lb{disc-lift}
   dX_t = \hat b(t,X_t)dt + \hat \sg(t,X_t) d\hat B_t,
   }
    \endgroup
where  $\hat B_t = (B_t, B^{h_1}_t, \ldots, B^{h_{L-1}}_t)$, 
$\hat b$ and $\hat \sg$ are $\Rnu^N$-valued and, respectively, $\Rnu^{N\x Ld}$-valued
maps whose components 
   in the equation for $X^{h_i}_t$ are $b(t-h_i, \dots)$  and $\sg_k(t-h_i, \dots)$, respectively, with the 
   arguments shifted by $-h_i$ compared to the arguments of $b$ and $\sg_k$ in the original SDE. 
   
The process $\hat B_t$ can be lifted to a  geometric rough path. 
   Let $h$ be one of the $h_i$'s, $i=1, \ldots, L-1$.
  Define $\mbb B^{ij}_{s,t} = \int_s^t (B^i_{r-h} - B^i_{s-h}) dB^j_r$  and
  $\mbb B^{ji}_{s,t} = (B^j_r - B^j_s) B^i_{r-h}|^t_s - \int_s^t B^i_{r-h}  dB^j_r -\dl_{i,j} \dl_{0,h} (t-s)$, where 
  the integrals are understood in the It\^o sense. For two delays $h_k$ and $h_l$, $k,l = 1,\ldots, L-1$,
  the components $B^i_{t-h_k}$ and $B^j_{t-h_l}$ are integrated against each other likewise. Let 
  $\mbf B_t$ denote the lift of $\hat B_t$. Remark that each $B^{h_i}_t$ is  a martingale
with respect to the filtration $\mc G_t = \mc F_{t-h_i}$ if $t\in [h_i,T]$ and $\mc G_t = \mc F_0$ 
if $t\in [0,h_i]$, and therefore, each of the equations composing \rf{disc-lift} can be viewed as an SDE.
By Proposition \ref{pro4.4},
   all the stochastic integrals  in \rf{disc-lift} can be reinterpreted  as rough integrals, so one can show that \rf{disc-lift} makes sense
   as an RDE, where the stochastic integral is replaced with the rough integral with respect to $\mbf B_t$.
   
   To introduce $\Rnu^{N\x N}$-valued Jacobian $Y_t$, we formulate \rf{disc-lift} for an arbitrary 
 $\Rnu^N$-valued initial condition $\bar x_0$ and define $Y_t = \pl_{\bar x_0} X_t$.
 Furthermore, we introduce a controlled rough path  $M_t = \int_0^t  \pl_x\hat \sg(s, X_s)   d\mbf B_s$,
 where $\pl_x$ denotes the partial gradient with respect to the spatial arguments of $\hat\sg$, and define the
 lift of $M_t$ as follows: $\mbb M_{s,t} = \int_s^t \dl M_{s,r} \ox dM_r$,
where the integration is understood in the sense of Remark 4.11 in \cite{FH} 
(i.e., when we integrate one controlled rough path against another). 
According to \cite{FH} (Section 7.1), $\mbf M = (M,\mbb M)\in \ms C^\al([0,T],\Rnu^N)$.
Also define $N^i_t = \int_{h_i}^t \pl_x\hat b(s-h_i, X^{h_m+h_i}(s), \ldots, X^{h_1+h_i}(s), X(s))) ds$ 
and  $N_t = (N^1_t, \ldots, N^L_t)$. The RDEs for $Y_t$ and its inverse $Z_t$ can be written as follows:
 \begingroup
\setlength{\belowdisplayskip}{2pt}
\setlength{\abovedisplayskip}{3pt} 
 \aa{
 dY_t =  dN_t \, Y_t + d\mbf M_t \,Y_t   \quad \text{and} \quad dZ_t = -Z_t d\td N_t - Z_t d\mbf M_t,
 }
 \endgroup
 where $\td N_t$ is to be determined from the condition $Z_tY_t = I$.
 The above equations, being linear, are expected to have unique global solutions.
Next, by $\Pi_n$ we denote the projection
 $\Rnu^N \to \Rnu^n$ to the original finite-dimensional space. 
  Using the scheme introduced in this work, one can obtain an analog of equation \rf{m-e}. It will be written for the process 
  $\Pi_n Y_\tau Z_t \hat V(t,X^{h_m}(t),
 \ldots, X^{h_1}(t), X(t))$, where $\hat V$ is the $\Rnu^N$-valued map built as a lift of an $\Rnu^n$-valued map
 $V(t,X^{h_m}(t),\ldots, X^{h_1}(t), X(t))$ by completing its  last $N-n$ components by $0$, and $\tau$ is the point where one wants to prove the smoothness
 of the density for $X(\fdot)$. Defining the Lie brackets for  the maps $V_i(t, x(t-h_m),\ldots, x(t-h_1), x(t))$, where $x\in \C([0,T],\Rnu^n)$,
  in terms of vertical derivatives, which is the same
 as taking partial derivatives with respect to the last argument, we obtain the following version of the non-degeneracy condition (A5):
 \bi
\item[\bf (A5)]
\bi
\item[\!(i)]
Suppose $\tau\in (h_i, h_{i+1}]$, $i=0, \ldots, m-1$, $h_0 = 0$.
There exists a number $\mc N(\tau)>0$ and a measurable function $\te_\tau: \Rnu^{i+2}\to (0,\infty)$ such that
\begingroup
\setlength{\belowdisplayskip}{1pt}
\setlength{\abovedisplayskip}{3pt} 
\aa{
\inf_{|z|=1}\sum_{j=1}^{\mc N(\tau)} \sum_{V\in \Sg_j} (z,V(\tau, \underbrace{x_0, \ldots, x_0}_{m-i}, y_0, \ldots, y_{i}))^2 \gt 
\te_\tau(x_0, y_0, \ldots,  y_{i}).
}
\endgroup
\item[\,(ii)] Assumption (A5)-(ii)  (on the inverse moments of 
$\te_\tau(x_0, X_\tau)$)  from subsection \ref{smooth}  is fulfilled.
 \ei
 \ei
If $\tau\in (h_m,T]$, (A5)-(i) looks similar but $\te_\tau: \Rnu^{m+1}\to (0,\infty)$ does not depend on $x_0$. 
Remark that this is the strong form of H\"ormander's condition which we formulated for simplicity; however,
one can also formulate a weaker form of H\"ormander's condition which includes the coefficient of the drift term.
It is possible  to verify that $\hat B_t$ 
 is $\te$-H\"older rough and its modulus of $\te$-H\"older roughness has finite inverse moments, so
we are able to apply Norris's lemma for rough paths.
 We also notice that it is not possible to interpret the RDE for $Z_t$ as an SDE. Thus, in order
 to show that $\|Z\|_\al$ and $\|R^Z\|_{2\al}$ have finite moments, one can make use of
 the result of \cite{cass13} to prove that $\|Z_t\|_{\Rnu^{N\x N}}$ has  finite moments, 
 along with some estimates from \cite{FH} (Chapter 4.3).
 
  Remark that in \cite{CE}, the authors use a different approach to obtaining the smoothness of the density for SDEs with discrete delays. 
 Define $\mc J_{r,t}$ for $r\lt t$
  as a solution to the SDE
 $\mc J_{r,t} = {\rm id} + \sum_{i=0}^m\big[\int_r^t \pl_i b\, \mc J_{r,s-h_i} ds + \int_r^t  \pl_i \sg \,\mc J_{r,s-h_i} dB_s\big]$, and set
 $\mc J_{r,t} = 0$ for $r>t$. Here, $\pl_i$ denotes the derivative with respect to the argument with the delay $h_i$ (we take $h_0 = 0$).
 One then restricts the analysis to the interval $[\tau-h_1,\tau]$ and notes that in the above equation,
 all the terms in the sum vanish, except for the one containing the derivative $\pl_0$. 
 This implies that  for $r,t\in [\tau-h_1,\tau]$, $r\lt t$, $\mc J_{r,t}$ is invertible and the Malliavin derivative factorizes as $D_r X(\tau) = \mc J_{r,\tau} \sg(r,X_r)$.
 The authors then formulate three versions of H\"ormander's condition. The first one is a uniform H\"ormander condition, which
 appears to be stronger than the one suggested here. The other two do not have analogs in this work.  To obtain the smoothness of the density, 
  \cite{CE} also uses rough path techniques and Norris's lemma for rough paths.

\vspace{-2mm}
\subsection*{Acknowledgements}
This research was supported by the Regional Program MATH-AmSud 2018, 
project ``Stochastic analysis of non-Markovian phenomena''\!, grant 88887.197425/ 2018-00. 
A.O. acknowledges 
the support of CNPq Bol\-sa de Pro\-du\-ti\-vi\-da\-de de Pesquisa grant 303443/2018-9. E.S.
thanks ENSTA Paris, where the work was started, for hospitality.
The authors are grateful to the referees for valuable comments that helped to improve the quality of the paper.


\begin{thebibliography}{99999}
\bibitem{banach} S. Banach, Th\'eorie des op\'erations lin\'eaires, Warszawa 1932.
\bibitem{BH} F. Baudoin, M. Hairer, A version of H\"ormander's theorem for the fractional Brownian motion, 
Probab. Theory Relat. Fields  139: pp. 373--395 (2007).
\bibitem{BC}
V. Bally, L. Caramellino, Regularity of probability laws by using an interpolation method. In: Utzet F., Vives J. (eds) 
Stochastic Integration by Parts and Functional It\^o Calculus. Advanced Courses in Mathematics - CRM Barcelona. Birkh\"auser, Cham (2016).
\bibitem{bell87}
D.~R.~Bell, The Malliavin Calculus, Pitman Monographs and Surveys in Pure and Applied Math. 34, Longman and Wiley (1987).
\bibitem{bell2004}
D.~R.~Bell, Stochastic differential equations and hypoelliptic operators. In: Rao M.M. (eds) Real and Stochastic Analysis. 
Trends in Mathematics. Birkh\"auser Boston (2004).
\bibitem{BM91}
D.~R.~Bell, S.-E. A. Mohammed, 
The Malliavin calculus and stochastic delay equations. J. Funct. Anal.  Vol. 99, (1), pp. 75--99 (1991).
\bibitem{BM95}
D.~R.~Bell, S.-E. A. Mohammed, Smooth densities for degenerate stochastic delay equations with hereditary drift,
Ann. Probab. Vol. 23, N 4, pp. 1875--1894 (1995).
\bibitem{BM95-1}
D.~R.~Bell, S.-E. A. Mohammed,  An extension of H\"ormander's theorem for infinitely degenerate second-order operators, 
Duke Math J. 78, no. 3, pp. 453--475 (1995).
 \bibitem{bis1}
 J.-M. Bismut, Martingales, the Malliavin calculus and H\"ormander's theorem, 
 in: Stochastic Integrals, Proc. Sym- pos., Univ. Durham, Durham, 1980, 
 in: Lecture Notes in Math., vol. 851, Springer, Berlin,  pp. 85--109 (1981).
\bibitem{bis2}
J.-M. Bismut, Martingales, the Malliavin calculus and hypoellipticity under general H\"ormander's conditions, 
Z. Wahrsch. Verw. Geb. 56 (4), pp. 469--505 (1981).
\bibitem{BJG}
K. Bichteler, J. Jacod, J.-B. Gravereaux, Malliavin Calculus for Processes with Jumps, Gordon and Breach Science Publishers (1987).
\bibitem{ZB} Z. Brze\'zniak, Some remarks on It\^o and Stratonovich integration in 2-smooth Banach spaces, book ``Probabilistic Methods in Fluids'', World Scientific, pp. 48--69 (2002).
\bibitem{burkhdan} R. Buckdahn, A regularity condition for non-Markovian solutions of stochastic differential equations in the plane.
Math. Nachr. 149, pp. 125--132 (1990).
\bibitem{cass09}
T. Cass, Smooth densities for solutions to stochastic differential equations with jumps, Stochastic Process. Appl. 119 (5), pp. 1416--1435 (2009).
\bibitem{cass15}
T. Cass, M. Hairer, C. Litterer, S. Tindel, 
Smoothness of the density for solutions to Gaussian rough differential equations, 
Ann. Probab., Vol. 43, 1, pp. 188--239 (2015).
\bibitem{cass13}
T. Cass, C. Litterer, T. Lyons,
Integrability and tail estimates for Gaussian rough differential equations
Ann. Probab., vol 41, N 4, pp. 3026--3050 (2013).
\bibitem{CE}
R. Chhaibi, I. Ekren, The H\"ormander condition for delayed stochastic differential equations,
Annales Henri Lebesgue, Vol. 3, pp. 1023--1048 (2020).
\bibitem{cont}
R. Cont, Pathwise calculus for non-anticipative functionals. In: Utzet F., Vives J. (eds) Stochastic Integration by Parts and Functional It\^o Calculus. 
Advanced Courses in Mathematics - CRM Barcelona. Birkh\"auser, Cham (2016)
\bibitem{cox}
S. H. Cox, Jr., S.B. Nadler, Jr, Supremum norm differentiability, Annales Societatis Mathematicae Polonae, 
Commentationes mathematicae, vol 15, No 1, pp. 127--131 (1971).
\bibitem{DUP}
{B. Dupire}, Functional It\^o calculus. Portfolio Research Paper 2009-04. Bloomberg.
\bibitem{FH}
{P. Friz, M. Hairer}, A course on rough paths: with an introduction to regularity structures, Springer (2014).
\bibitem{GH}
{A. Gerasimovics, M. Hairer}, H\"ormander's theorem for semilinear SPDEs, 
Electron. J. Probab. 24, no. 132,  pp. 1--56 (2019).
\bibitem{HM}
M. Hairer, J. Mattingly, A theory of hypoellipticity and unique ergodicity for semilinear stochastic PDEs,
Electron. J. Probab., Vol, 16, pp. 658--738 (2011).
\bibitem{HP1}
{ M. Hairer, N.S. Pillai}, Ergodicity of hypoelliptic SDEs driven by fractional Brownian motion,
Annales de l'I.H.P. Probabilit\'es et statistiques, vol 47  no. 2, pp. 601--628 (2011).
\bibitem{HP2}
{ M. Hairer, N.S. Pillai}, Regularity of laws and ergodicity of hypoelliptic SDEs driven by rough paths,
Ann. Probab. vol. 41, no. 4, pp. 2544--2598 (2013).
\bibitem{hale}
J. Hale, Theory of functional differential equations. Applied Mathematical Sciences. Springer-Verlag (1977).
\bibitem{hirsch} F. Hirsch, Propri\'et\'e d'absolue continuit\'e 
pour les \'equations diff\'erentielles stochastiques d\'ependant du pass\'e. 
J. Functional Anal. 76, pp. 193--216  (1988).
\bibitem{HT}
Y. Hu, S. Tindel, Smooth density for some nilpotent rough differential equations, 
J Theor Probab 26, pp. 722--749 (2013).
\bibitem{hor67}
L. H\"ormander, Hypoelliptic second order differential equations, Acta Math. 119, pp. 147--171 (1967).
\bibitem{IK}
Y. Ishikawa, H. Kunita, Malliavin calculus on the Wiener--Poisson space and its application to canonical SDE with
jumps, Stochastic Process. Appl. 116 (12) pp. 1743--1769  (2006). 
\bibitem{KSt} 
S.  Kusuoka, D. Stroock, Applications of the Malliavin Calculus I, 
In: Stochastic Analysis, Proc. Taniguchi Inter. Symp. on Stochastic Analysis, Katata and Kyoto 1982, ed.: 
K. It\^o, Kinokuniya/North-Holland, Tokyo, pp. 271--306 (1984).
\bibitem{KSt2}
S. Kusuoka, D. Stroock, Applications of the Malliavin calculus II, J. Fac. Sci. Univ. Tokyo Sect. IA Math. 32(1), pp. 1--76 (1985).
\bibitem{KSt3}
S. Kusuoka,  D. Stroock,  Applications of the Malliavin calculus. III. J. Fac. Sci. Univ. Tokyo Sect. IA Math. 34(2), pp. 391--442 (1987).
\bibitem{mal}
P. Malliavin, Stochastic calculus of variations and hypoelliptic operators. Symp. Stoch. Diff. Equations, Kyoto 1976, pp. 147--171.
\bibitem{MP}
J. C. Mattingly,  \'E. Pardoux, Malliavin calculus for the stochastic 2D Navier--Stokes equation,
Commun. Pure Appl. Math, Vol. LIX, pp. 1742--1790 (2006).
\bibitem{mazur}
S. Mazur, \"Uber konvexe Mengen in linearen normierten R\"aumen, Studia Math, 4, pp. 70--84 (1933).
\bibitem{VN}
{J.\ M.\ A.\ M. van Neerven}, $\gm$-Radonifying operators - a survey, Proceedings of the CMA  {44},  pp. 1--62 (2010).
\bibitem{VN1}
 J. van Neerven, M. Veraar, L. Weis, Stochastic Integration in Banach Spaces - a Survey. 
 In: Dalang R., Dozzi M., Flandoli F., Russo F. (eds) Stochastic Analysis: A Series of Lectures. Progress in Probability, 
 vol 68. Birkh\"auser, Basel (2015).
\bibitem{norris} J. Norris,
Simplified Malliavin calculus. In: S\'eminaire de Probabilit\'es, XX, 1984/85, pp.101--130. Lecture Notes in Mathematics, vol. 1204. 
Springer, Berlin Heidelberg New York (1986).
\bibitem{nualart}
{D. Nualart}, The Malliavin calculus and related topics, Springer-Verlag Berlin Heidelberg (2006).
\bibitem{ocone} D. Ocone, Stochastic calculus of variations for stochastic partial differential equations,
 J. Funct. Anal., Vol. 79, (2), pp. 288--331 (1988).
\bibitem{pisier}
{G. Pisier}, Martingales with values in uniformly convex spaces, Israel Journal of Mathematics, 20:3-4, pp. 326--350 (1976).
\bibitem{stroock} D.W. Stroock, Some applications of stochastic calculus to partial differential equations. 
Ecole d'Et\'e de Probabilit\'es de Saint-Flour XI -- 1981, pp. 267--382 (1983).
\bibitem{AT} {A. Takeuchi}, Malliavin calculus for degenerate stochastic functional differential equations,
Acta Appl Math 97,  pp. 281--295 (2007).
\bibitem{AT2}
A. Takeuchi, The Malliavin calculus for SDE with jumps and the partially hypoelliptic problem, Osaka J. Math. 39 (3), 
pp. 523--559 (2002).
\bibitem{AT3}  A. Takeuchi, 
Joint distributions for stochastic functional differential equations,
Stochastics, Vol. 88, (5), pp. 711--736 (2016). 
\bibitem{wat84}
S. Watanabe, Lectures on stochastic differential equations and Malliavin calculus, Tata Institute of Fundamental Research, Springer-Verlag (1984).






\end{thebibliography}
\end{document}